\newtheorem{theorem}{Theorem}[section] %
\newtheorem{definition}{Definition}[section] %
\newtheorem{example}{Example}[section]
\newtheorem{lemma}{Lemma}[section]
\newcommand{\bu}{{\bf u}}
\newcommand{\bw}{{\bf w}}
\newcommand{\be}{{\bf e}}
\newcommand{\bv}{{\mathbf v}}
\def\bbf{{\bf f}}
\def\bg{{\bf g}}
\def\bn{{\bf n}}
\def\3bar{{|\hspace{-.02in}|\hspace{-.02in}|}}
\numberwithin{equation}{section}
\begin{document}

\begin{frontmatter}

\title{The weak Galerkin finite element method for Stokes interface problems with curved interface}

\author[mymainaddress]{Lin Yang}
\ead{linyang22@mails.jlu.edu.cn}
\author[mymainaddress]{Qilong Zhai\corref{mycorrespondingauthor}}
\cortext[mycorrespondingauthor]{Corresponding author}
\ead{zhaiql@jlu.edu.cn}
\author[mymainaddress]{Ran Zhang}
\ead{zhangran@jlu.edu.cn}

\address[mymainaddress]{School of Mathematics, Jilin University, Changchun 130012, Jilin, P. R. China}

\begin{abstract}
In this paper, we develop a weak Galerkin (WG) finite element scheme for the Stokes interface problems with curved interface. The conventional numerical schemes rely on the use of straight segments to approximate the curved interface and the accuracy is limited by geometric errors. Hence in our method, we directly construct the weak Galerkin finite element space on the curved cells to avoid geometric errors. For the integral calculation on curved cells, we employ non-affine transformations to map curved cells onto the reference element. The optimal error estimates are obtained in both the energy norm and the $L^2$ norm. A series of numerical experiments are provided to validate the efficiency of the proposed WG method.
\end{abstract}

\begin{keyword}
Weak Galerkin finite element methods, Curved interface, Stokes equations, Weak divergence, Weak gradient.

\MSC[2008] 35B45 \sep 65N15 \sep 65N22 \sep 65N30 \sep 76D07
\end{keyword}

\end{frontmatter}


\section{Introduction}
\label{section:introduction}
In this paper, we focus on the Stokes interface problems. For simplicity, we adopt a specific model to describe the problems. Let $\Omega \subset \mathbb{R}^2$ be an open bounded domain,
which is partitioned into two subdomains, $\Omega_1$ and $\Omega_2$. In each subdomain, the flow is governed by the incompressible Stokes equations, i.e.
\begin{eqnarray}
	-\nabla\cdot(A_i \nabla \bu_i)+\nabla p_i &=&\bbf_i ,\,\, \qquad\text{in}\ \Omega_i,\label{model 1} \\
	\nabla\cdot \bu_i &=&0 , \,\,\,\qquad\text{in}\ \Omega_i,\\
	\bu_i&=&\bg_i , \qquad\text{on}\ \partial \Omega_i \backslash \Gamma ,
\end{eqnarray}
with the viscosity coefficient $A_i > 0$ defined in the $\Omega_i$. 
For simplicity of analysis, let $A_i$ be the piecewise constant matrix in this paper. And  $\Gamma=\partial \Omega_1 \cap \partial\Omega_2$ denotes the interface between two subdomains and belongs to $C^2$ piecewise. The interface conditions on $\Gamma$ are described by the following equations:
\begin{eqnarray}
	\bu_1-\bu_2&=&\bm{\phi}, \qquad\text{on}\ \Gamma, \label{interface 1}\\
	(A_1 \nabla \bu_1-p_1 I)  \mathbf{n}_1 +(A_2 \nabla \bu_2-p_2 I)  \mathbf{n}_2&=&\bm{\psi}, \qquad\text{on}\ \Gamma, \label{interface 2}
\end{eqnarray}
where $\mathbf{n}_1$ and $\mathbf{n}_2$ are the unit outward normal vectors on $\Gamma$. $\bn_1$ points from $\Omega_{1}$ into $\Omega_{2}$ and $\bn_2=-\bn_1$ (see Figure \ref{figure1}) and $I$ represents the identity matrix. We shall drop the subscript $i$ when velocity function $\bu$ and pressure function $p$ are defined in the whole domain $\Omega$. The Stokes interface problems are classical fluid mechanics problems that describe the flow of viscous fluid through interfaces. The interface problems have been widely applied in different fields such as groundwater resource management, petroleum engineering, biomedicine, and more \cite{Stokesinterfacebackground4,Stokesinterfacebackground1,Stokesinterfacebackground6,Stokesinterfacebackground2,Stokesinterfacebackground5,Stokesinterfacebackground3}.

In practical problems, the interface is usually a complex curved surface. In the two-dimensional problems, when dealing with such curved interface, a common approach involves approximating these curves with straight line segments. However, when employing high-degree polynomials to approximate the exact solution, the presence of geometric errors leads to a reduction in the orders of convergence \cite{reduceerror3,reduceerror1,reduceerror2}. Therefore, in domain with curved edges, various numerical methods have been proposed to solve the problems. For example, finite element method \cite{FEMcurved1,pFEM,isogeometricmethods,NEFEM,reduceerror3}, discontinuous Galerkin finite element method\cite{DGcurved2,DGcurved3,DGcurved1,DGcurved4,DGcurved5}, virtual element method \cite{VEMcurved2,VEMcurved1}, weak Galerkin finite element method \cite{guan2019weak,Wang_Curved_edges,mu_weak_2021}, etc.

In this work, we use the weak Galerkin (WG) finite element method to solve the Stokes interface problems with curved edges on fitted meshes. The WG method was first proposed in \cite{wang2013weak} for solving second order elliptic equations. Compared with other methods, the method uses separate polynomial functions on each cell and adds stabilizers to the cell boundaries to ensure weak continuity of the approximative functions. Since polynomial spaces are easy to construct, the WG method can be applied to polygonal or polyhedral meshes. At the same time, this method uses weakly defined differential operators to replace the classical differential operators. The WG method is used to solve various problems, such as Stokes equations \cite{WGStokes1,WGStokes2}, Brinkman equations \cite{WGBrinkman1,WGBrinkman2}, linear elasticity equations \cite{elasticityequation}, parabolic equations\cite{parabolic}, elliptic interface problems \cite{ellipticinterface}, Stokes-Darcy problems \cite{Wang_Stokes_Darcy,stokesDarcy}, etc.

In this paper, we mainly treat curved cells. For the curved cells, instead of replacing curved edges with straight segments, we directly construct the weak Galerkin space on the curved cells. For the integral calculation on the curved cells, we use non-affine transformation\cite{nonaffineisoparametric1,nonaffineisoparametric2} to convert curved cell to the reference element. This treatment avoids geometric errors and makes the scheme easier to implement.

The outline of this paper is as follows: In Section 2, some notations used in this paper are presented. We also give the definitions of the weak Galerkin finite element space and weak differential operators. Based on these definitions, the weak Galerkin finite element scheme is established. Moreover, the proof of the existence and uniqueness is given. Section 3 is devoted to the proof of the stability of the solution of the WG scheme and presents some important inequalities. In Section 4 and 5, the error estimates in the energy norm and the $L^2$ norm are proved separately. Finally, we give some numerical examples to verify our proposed theories in Section 6.

\begin{figure}
	\centering
	\setlength{\unitlength}{1bp}%
	\begin{picture}(134.40, 125.67)(0,0)
		\put(0,0){\includegraphics{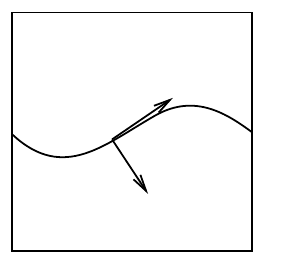}}
		\put(30.28,88.91){\fontsize{14.23}{17.07}\selectfont $\Omega_1$}
		\put(70.96,81.04){\fontsize{8.54}{10.24}\selectfont $\bm{t}_1$}
		\put(55.30,33.85){\fontsize{8.54}{10.24}\selectfont $\bm{n}_1$}
		\put(30.28,19.83){\fontsize{14.23}{17.07}\selectfont $\Omega_2$}
		\put(102.93,77.28){\fontsize{8.54}{10.24}\selectfont $\Gamma$}
		\put(123.79,106.86){\fontsize{8.54}{10.24}\selectfont $\partial \Omega_{1}$}
		\put(124.48,19.83){\fontsize{8.54}{10.24}\selectfont $\partial \Omega_{2}$}
	\end{picture}
	\caption{The model of Stokes-Stokes coupling flow}
	\label{figure1}
\end{figure}
\section{The Weak Galerkin Numerical Scheme}
\label{sec2}
\label{sec:1}
In this section, we first give the introduction of notations used in the paper. Then, we define the weak Galerkin finite element space and the corresponding weak differential operators. Next, the weak Galerkin finite element scheme is established. Based on the scheme, the proof of existence and uniqueness are given.
\subsection{The notations}
\label{sec:2}
Assume that $\mathcal{T}_h^1$ and $\mathcal{T}_h^2$ are the polygon partitions of the domain $\Omega_1$ and $\Omega_2$ containing both straight and curved cells. Set $\mathcal{T}_h = \mathcal{T}_h^1 \cup \mathcal{T}_h^2$. For $T \in \mathcal{T}_h$, when all edges of $T$ are straight, it is called a straight cell. The set of such straight cells is denoted by $\mathcal{T}_h^S$. When $T$ has a curved edge which intersects with the interface, it is called a curved cell. The set of the curved cells is denoted by $\mathcal{T}_h^I$. For $T \in \mathcal{T}_h$, $|T|$ and $h_T$ are the area and the diameter of the cell $T$, respectively. Denote that $h=\max_{T \in \mathcal{T}_h} h_T$ is the mesh size. Let $\mathcal{E}_h$ be the set of all edges on the partition $\mathcal{T}_h$ and $\mathcal{E}_h^0$ be the set of all interior edges including interface edges. Denote by $\mathcal{E}_h^S$ and $\mathcal{E}_h^I$ the set of all straight edges and the set of all curved edges, respectively. The cell $T \in \mathcal{T}_h$ should satisfy regularity conditions in \cite{WGStokes1}. In addition to these conditions, the following conditions also need to be satisfied.

(A1) There exists a positive number $C_1$ satisfying 
\begin{eqnarray}
	|e| \leqslant C_1 h_T,
\end{eqnarray}
for all edges $e \in \partial T$ and  $T \in \mathcal{T}_h$.

(A2) For every cell $T$, there exists a ball in the interior of $T$.
\subsection{Weak Galerkin finite element space}
For $T \in \mathcal{T}_h$, we define the weak function $\bv= \{\bv_0, \bv_b \}$ on the cell $T$, where $\bv_0$ represents the interior function in $T$ and $\bv_b$ represents the boundary function on $\partial T$. Note that $\bv_b$ has one unique value on the edge $e \in \mathcal{E}_h^S$ and there are two values $\bv_{1b}$ and $\bv_{2b}$ on the edge $e \in \mathcal{E}_h^I$. And there is no relationship between $\bv_0$ and $\bv_b$. Next, we give definitions of some projection operators. For a given integer $k \geqslant 1$, let $Q_0$ be the $L^2$ projection operator from $[L^2(T)]^2$ onto $[P_k(T)]^2$, $T \in \mathcal{T}_h$. $Q_b$ is defined as the $L^2$ projection operator from $[L^2(e)]^2$ onto $[P_{k-1}(e)]^2$ when $e \in \mathcal{E}_h^S$, and from $[L^2(e)]^{2}$ onto $[P_{k}(e)]^{2}$ when $e \in \mathcal{E}_h^I$. Finally, we define the weak Galerkin finite element space with respect to the velocity function $\bu$ and the pressure function $p$.
\begin{align*}
	V_h=&\{\bv=\{ \bv_0, \bv_b\}: \bv_0|_{T} \in [P_k(T)]^2, \,T \in \mathcal{T}_h,\\
	&\bv_b|_e \in [P_{k-1}(e)]^2, e \in \mathcal{E}_h^S,
	\bv_b|_e \in [P_k(e)]^2, e \in \mathcal{E}_h^I
	\},\\
	V_h^0=&\{\bv=\{ \bv_0, \bv_b\} \in V_h, \bv_b=\mathbf{0} ~on ~ \partial \Omega, \bv_{1b} - \bv_{2b}= \mathbf{0}, e \in \mathcal{E}_h^I\},\\
	W_h=&\{q: q \in L_0^2(\Omega), q|_T \in P_{k-1}(T),\, T \in \mathcal{T}_h\}.
\end{align*}

Firstly, we introduce some weak differential operators used in this paper.
\begin{definition}\cite{WGStokes1}
	For $\bv \in V_h$, its discrete weak gradient $\nabla_w \bv \in $
	$ [P_{k-1}(T)]^{2\times 2}$ satisfies  
	\begin{equation}
		(\nabla_w \bv, \tau)_T=-(\bv_0,\nabla \cdot \tau)_T + \langle \bv_b,\tau \cdot \bn \rangle_{\partial T},~\forall~\tau \in [P_{k-1}(T)]^{2\times 2},\label{weak gradient}
	\end{equation}
	where $\bn$ is the unit outward normal vector on $\partial T$.
\end{definition}

\begin{definition}\cite{WGStokes1}
	For every $\bv \in V_h$, its discrete weak divergence $\nabla_w \cdot \bv \in P_{k-1}(T)$ satisfies 
	\begin{equation}
		(\nabla_w \cdot \bv,\varphi)_T=-(\bv_0, \nabla \varphi)_T +  \langle \bv_b, \varphi \bn \rangle_{\partial T},~ \forall~\varphi \in P_{k-1}(T), \label{weak divergence}
	\end{equation}
	where $\bn$ is the unit outward normal vector on $\partial T$.	
\end{definition}

\subsection{The numerical scheme}
We define bilinear forms as follows: 
\begin{eqnarray*}
	\begin{split}
		s(\bv,\bw)=&\sum_{T \in \mathcal{T}_{h}} A h_T^{-1}\langle Q_b \bv_0 -\bv_b,Q_b \bw_0 -\bw_b \rangle_{\partial T \setminus (\partial T \cap \Gamma)}\\
		&+\sum_{T \in \mathcal{T}_{h}} A h_T^{-1}  \langle\bv_0 - \bv_b, \bw_0-\bw_b \rangle_{\partial T \cap \Gamma},\\
		a(\bv,\bw)=&\sum_{T \in \mathcal{T}_h}(A\nabla_w \bv,\nabla_w \bw)_T,\\
		a_s(\bv,\bw)=& a(\bv,\bw)+s(\bv,\bw),\\
		b(\bv,p)=&-\sum_{T \in \mathcal{T}_h}(\nabla_w \cdot \bv,p)_T,
	\end{split}
\end{eqnarray*}
for all $\bv$, $\bw \in V_h$ and $p \in W_h$.
\begin{algorithm}
	\caption{Weak Galerkin Scheme}
	For the Stokes interface problems (\ref{model 1})-(\ref{interface 2}), the WG scheme is seeking $\mathbf{u}_h \in V_h$ and $p_h \in W_h$ to satisfy $\bu_b=Q_b \bg ~\text{on} \,  \partial \Omega$, $\bv_{1b} - \bv_{2b}= Q_b \bm{\phi} ~\text{on}~ e \in \mathcal{E}_h^I$, and
	\begin{eqnarray}
		a_s(\bu_h,\bv_h)+b(\bv_h,p_h)&=&(\bbf,\bv_0)+\langle \bm{\psi} ,\bv_b \rangle_{\Gamma}, ~\forall~\bv_h \in V_h^0,\label{algorithm 1}\\
		b(\bu_h,q_h)&=&0,~~~~~~~~~~~~~~~~~~~~~~\forall ~q_h \in W_h. \label{algorithm 2}
	\end{eqnarray}
\end{algorithm}

We now define a semi-norm in the space $V_h$ as follows:
\begin{eqnarray}
	\3bar\bv\3bar^2=a_s(\bv,\bv).
\end{eqnarray}
Then we have the following properties.

\begin{lemma}
	$\3bar \cdot \3bar$ provides a norm in $V_h^0$.
\end{lemma}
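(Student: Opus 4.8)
The plan is to check the three norm axioms on $V_h^0$, of which only positive-definiteness requires real work. Nonnegativity and homogeneity are immediate, since $\3bar\bv\3bar^2 = a_s(\bv,\bv)$ is a sum of squared $L^2$-norms weighted by the symmetric positive-definite matrix $A$ (using the lower ellipticity bound with $k_1>0$) together with the manifestly nonnegative stabilizer $s(\bv,\bv)$; thus $a_s$ is a symmetric positive semidefinite bilinear form. The triangle inequality then follows from the Cauchy--Schwarz inequality for $a_s$, because $\3bar\bv+\bw\3bar^2 = \3bar\bv\3bar^2 + 2a_s(\bv,\bw) + \3bar\bw\3bar^2 \leqslant (\3bar\bv\3bar + \3bar\bw\3bar)^2$.

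The substance is to show $\3bar\bv\3bar=0 \Rightarrow \bv=0$. If $\3bar\bv\3bar=0$, then both $a(\bv,\bv)=0$ and $s(\bv,\bv)=0$. The ellipticity of $A$ forces $\nabla_w\bv=0$ on every $T\in\mathcal{T}_h$, while vanishing of the stabilizer gives $Q_b\bv_0=\bv_b$ on $\partial T\setminus(\partial T\cap\Gamma)$ and $\bv_0=\bv_b$ on $\partial T\cap\Gamma$. I would insert $\nabla_w\bv=0$ into the definition of the discrete weak gradient and integrate by parts on the interior term, obtaining $(\nabla\bv_0,q)_T = \langle \bv_0-\bv_b,\,q\cdot\bn\rangle_{\partial T}$ for all $q\in[P_{k-1}(T)]^{d\times d}$.

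The key computation is to take $q=\nabla\bv_0$, which is admissible because $\bv_0\in[P_k(T)]^d$ yields $\nabla\bv_0\in[P_{k-1}(T)]^{d\times d}$. On each straight edge $e\subset\partial T\setminus\Gamma$ the outward normal $\bn$ is constant, so $(\nabla\bv_0)\cdot\bn\in[P_{k-1}(e)]^d$; combined with $\bv_0-\bv_b=\bv_0-Q_b\bv_0$ and the defining orthogonality of the $L^2$-projection $Q_b$ onto $[P_{k-1}(e)]^d$, these boundary contributions vanish. On the curved portion $\partial T\cap\Gamma$ the contribution vanishes directly since there $\bv_0=\bv_b$. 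Hence $\|\nabla\bv_0\|_T^2=0$, so $\bv_0$ is a constant vector on each cell. Feeding this back into the stabilizer identities then shows $\bv_b=\bv_0|_T$ on all of $\partial T$ (on straight edges because a constant equals its own $Q_b$-projection, on $\Gamma$ directly).

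Finally I would propagate the cell constants across the whole mesh. Single-valuedness of $\bv_b$ on each interior straight edge forces the constants of the two adjacent cells to coincide, while on interface edges the constraint $\bv_{1b}-\bv_{2b}=\mathbf{0}$ in $V_h^0$ does the same across $\Gamma$. Since $\Omega$ is connected, all cell constants equal a single vector $\bc$ and $\bv_b\equiv\bc$ throughout; the boundary condition $\bv_b=\mathbf{0}$ on $\partial\Omega$ then gives $\bc=\mathbf{0}$, whence $\bv=0$. The main obstacle is the boundary-term bookkeeping in the key computation: one must exploit the $P_{k-1}$ projection on the straight parts but the raw difference $\bv_0-\bv_b$ on the curved parts, checking edge by edge that $(\nabla\bv_0)\cdot\bn$ has the correct polynomial degree so that every contribution genuinely cancels.
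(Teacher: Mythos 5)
Your proposal is correct and follows essentially the same route as the paper: vanishing of the stabilizer and weak gradient, integration by parts, the choice $q=\nabla\bv_0$ with the $Q_b$-orthogonality killing the straight-edge terms (where $(\nabla\bv_0)\cdot\bn\in[P_{k-1}(e)]^d$ since $\bn$ is constant) and the raw identity $\bv_0=\bv_b$ killing the interface terms, followed by propagation of the cell constants to zero via the boundary condition. If anything, you are more explicit than the paper on two points it glosses over --- the polynomial-degree check that justifies the projection step on straight edges, and the connectedness argument (single-valuedness of $\bv_b$ on interior edges plus $\bv_{1b}-\bv_{2b}=\mathbf{0}$ across $\Gamma$) that makes the constants match globally.
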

\begin{proof}
	Assume that $\3bar \bv \3bar =0$ with $\bv \in V_h^0$, then we obtain
	\begin{eqnarray*}
		\begin{split}
			0=a_s(\bv,\bv)=&\sum_{T \in \mathcal{T}_h}(A\nabla_w \bv,\nabla_w \bv)_T+\sum_{T \in \mathcal{T}_{h}}A h_T^{-1}\langle \bv_0 - \bv_b, \bv_0-\bv_b \rangle_{\partial T \cap \Gamma}\\
			&+\sum_{T \in \mathcal{T}_{h}}A h_T^{-1}  \langle Q_b \bv_0 -\bv_b,Q_b \bv_0 -\bv_b \rangle_{\partial T \backslash (\partial T \cap \Gamma)},		
		\end{split}
	\end{eqnarray*}
	which leads to 
	\begin{eqnarray*}
		\nabla_w \bv = \mathbf{0} ~  in ~ T \in \mathcal{T}_h ,\quad
		Q_b \bv_0=\bv_b ~ on ~  e \in \mathcal{E}_h^S,\quad
		\bv_0 =\bv_b ~  on ~  e \in \mathcal{E}_h^I.
	\end{eqnarray*}
	Therefore, for any $\tau \in [P_{k-1}(T)]^{2 \times 2}$, we have the following results.
	
	For $ T \in \mathcal{T}_{h}^S$, according to the definition (\ref{weak gradient}), integration by parts and the property of the $L^2$ projection operator, we obtain 
	\begin{align*}
		0=&(\nabla_w \bv ,\tau)_T\\
		=&-(\bv_0,\nabla \cdot \tau)_T+\langle \bv_b ,\tau \bn \rangle_{\partial T}\\
		=&(\nabla \bv_0, \tau)_T-\langle \bv_0 ,\tau \bn \rangle_{\partial T}+\langle \bv_b ,\tau \bn \rangle_{\partial T}\\
		=&(\nabla \bv_0, \tau)_T-\langle Q_b \bv_0 -\bv_b ,\tau \bn \rangle_{\partial T}\\
		=&(\nabla \bv_0, \tau)_T.
	\end{align*}
	
	Similarly, for $T \in \mathcal{T}_{h}^I$, we get 
	\begin{align*}
		0=&(\nabla_w \bv ,\tau)_T\\
		=&-(\bv_0,\nabla \cdot \tau)_T+\langle \bv_b ,\tau \bn \rangle_{\partial T}\\
		=&(\nabla \bv_0,\tau)_T-\langle \bv_0 ,\tau \bn \rangle_{\partial T}+\langle \bv_b ,\tau \bn \rangle_{\partial T}\\
		=&(\nabla \bv_0,\tau)_T-\langle Q_b\bv_0 -\bv_b ,\tau \bn \rangle_{\partial T \backslash (\partial T \cap \Gamma)} -\langle \bv_0 -\bv_b ,\tau \bn \rangle_{\partial T \cap \Gamma} \\
		=&(\nabla \bv_0,\tau)_T.
	\end{align*}
	Choosing $\tau=\nabla \bv_0$ in the above equations derives $\nabla \bv_0=0$ in $T \in \mathcal{T}_{h}$. It follows that $\bv_0$ is constant in $T \in \mathcal{T}_{h} $. Due to $\bv_0=Q_b \bv_0 =\bv_b $ on $e \in \mathcal{E}_h$, we have $\bv_0 = \bv_b =C$. According to the fact that $\bv_b=0 $ on $\partial \Omega$, we obtain that $\bv_0=0$ and $\bv_b=0$.
\end{proof}

\begin{lemma}\label{Abounded} \cite{WGStokes1}
	For any $ \bv,\bw \in V_h$, there holds
	\begin{eqnarray*}
		|a_s(\bv,\bw)|&\leqslant& \3bar \bv \3bar \cdot \3bar \bw \3bar.
	\end{eqnarray*}
\end{lemma}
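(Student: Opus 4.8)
The plan is to observe that both assertions follow at once from the fact that $a_s(\cdot,\cdot)$ is a symmetric, positive semidefinite bilinear form on $V_h^0$: the second line is then its defining relation with the norm, while the first is the Cauchy--Schwarz inequality for the form it induces. Consistently with the definition $\3bar\bv\3bar^2 = a_s(\bv,\bv)$, I read the bilinear form in the statement as the stabilized form $a_s$; the corresponding estimate for the unstabilized $a$ then follows as well, since $a(\bv,\bv) \le a_s(\bv,\bv)$ because $s(\bv,\bv) \ge 0$.

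First I would check symmetry of $a_s$. Since $A$ is symmetric, $(A\nabla_w\bv,\nabla_w\bw)_T = (A\nabla_w\bw,\nabla_w\bv)_T$ on each $T$, so $a(\cdot,\cdot)$ is symmetric; the stabilizer $s(\cdot,\cdot)$ is visibly symmetric, being a sum of $L^2$ inner products in which the two arguments enter symmetrically. Hence $a_s$ is symmetric. The identity $a_s(\bv,\bv) = \3bar\bv\3bar^2$ is then nothing but the definition of $\3bar\cdot\3bar$, and the preceding lemma guarantees this quantity is nonnegative (indeed a genuine norm on $V_h^0$), so $a_s$ is positive semidefinite.

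With symmetry and nonnegativity in hand, I would obtain the bound by the standard discriminant argument: for every real $t$ one has $0 \le a_s(\bv+t\bw,\bv+t\bw) = \3bar\bv\3bar^2 + 2t\,a_s(\bv,\bw) + t^2\3bar\bw\3bar^2$, and requiring this quadratic in $t$ to have nonpositive discriminant gives $a_s(\bv,\bw)^2 \le \3bar\bv\3bar^2\,\3bar\bw\3bar^2$, that is $|a_s(\bv,\bw)| \le \3bar\bv\3bar\,\3bar\bw\3bar$.

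I do not expect any genuine obstacle: everything reduces to the symmetry of $A$ together with the positivity already established in the previous lemma. The only point meriting attention is that Cauchy--Schwarz is invoked in the merely semidefinite setting rather than a strictly definite one, which the discriminant argument accommodates without requiring $\3bar\bw\3bar > 0$.
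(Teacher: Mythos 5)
The paper states this lemma without proof, so there is no argument of its own to compare against; your proposal is correct and supplies the standard one (symmetry of $a_s$ from the symmetry of $A$, positive semidefiniteness from the previous lemma, and the discriminant form of Cauchy--Schwarz, which rightly avoids assuming definiteness). Your reading of the statement is also the correct repair of what is evidently a typo: as literally written, $a(\bv,\bv)=\3bar\bv\3bar^2$ contradicts the definition $\3bar\bv\3bar^2=a_s(\bv,\bv)=a(\bv,\bv)+s(\bv,\bv)$ unless $s(\bv,\bv)=0$, and the paper's later invocation of the ``boundness of $a_s$'' in the proof of Theorem \ref{H1ERROR} confirms that $a_s$ is the intended form. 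One step you should make explicit: the bound for the unstabilized $a$ does \emph{not} follow from the $a_s$ bound merely because $a(\bv,\bv)\leqslant a_s(\bv,\bv)$, since $a(\bv,\bw)$ and $a_s(\bv,\bw)$ differ by the sign-indefinite cross term $s(\bv,\bw)$; instead, run your discriminant argument on $a$ itself, which is symmetric and nonnegative (as $\alpha' A\alpha\geqslant k_1\alpha'\alpha\geqslant 0$), to get $|a(\bv,\bw)|\leqslant a(\bv,\bv)^{1/2}a(\bw,\bw)^{1/2}$, and only then use $a(\cdot,\cdot)^{1/2}\leqslant\3bar\cdot\3bar$ on each factor. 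Since you already noted the symmetry of $a$, this is a one-line adjustment rather than a genuine gap.
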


\begin{lemma}
	The WG scheme (\ref{algorithm 1})-(\ref{algorithm 2}) has a unique solution.
\end{lemma}
\begin{proof}
	Since (\ref{algorithm 1})-(\ref{algorithm 2}) are finite-dimensional square linear equations, existence and uniqueness are equivalent. Let $\{ \bu_h,p_h\}$ and $\{ \bu_h',p_h'\}$ be the solutions of the WG scheme (\ref{algorithm 1})-(\ref{algorithm 2}), respectively. For $v_h \in V_h^0$ and $q_h \in W_h$, we have
	\begin{eqnarray}
		a_s(\bu_h , \bv_h)+b(\bv_h , p_h)&=&(\bbf, \bv_0)+\langle \bm{\psi} , \bv_b \rangle_{\Gamma},\label{uniqueness1}\\
		b(\bu_h ,q_h)&=&0,\label{uniqueness2}\\
		a_s(\bu_h' , \bv_h)+b(\bv_h , p_h')&=&(\bbf, \bv_0)+\langle \bm{\psi}, \bv_b \rangle_{\Gamma},\label{uniqueness3}\\
		b(\bu_h' ,q_h)&=&0.\label{uniqueness4}		
	\end{eqnarray}
	Subtracting Eq.(\ref{uniqueness3}) from Eq.(\ref{uniqueness1}) and subtracting Eq.(\ref{uniqueness4}) from Eq.(\ref{uniqueness2}) derives that
	\begin{eqnarray}
		a_s(\bu_h-\bu_h',\bv_h)+b(\bv_h,p_h-p_h') &=& 0, ~ v_h \in V_h^0 \label{uniqueness5}\\
		b(\bu_h-\bu_h',q_h)&=&0,~ q_h \in W_h.\label{uniqueness6}
	\end{eqnarray}
	Taking $\bv_h =\bu_h - \bu_h' \in V_h^0$ in Eq. (\ref{uniqueness5}) and $q_h = p_h - p_h' \in W_h$ in Eq.(\ref{uniqueness6}) leads to
	\begin{eqnarray}
		a_s(\bu_h-\bu_h',\bu_h-\bu_h')+b(\bu_h-\bu_h',p_h-p_h')=0,\label{uniqueness7}\\
		b(\bu_h-\bu_h',p_h-p_h')=0.\label{uniqueness8}
	\end{eqnarray}
	Since $\3bar \cdot \3bar$ is a norm in $V_h^0$, we obtain $\bu_h =\bu_h'$ and $b(\bv_h,p_h - p_h')=0$. Then taking $\bv_h=\{\nabla (p_h - p_h'),\mathbf{0} \}$ gives 	
	$$0 = \sum_{T \in \mathcal{T}_h} \| \nabla (p_h -p_h')\|_T^2,$$
	which leads to
	$p_h-p_h'$ is constant in $ T \in \mathcal{T}_h$.
	
	Next, setting $\bv_0 = \mathbf{0}$ in $T \in \mathcal{T}_h$, $\bv_b = [ p_h - p_h'] \bn_e$ on $e \in \mathcal{E}_h^S$ and $\bv_b =\mathbf{0}$ on $e \in \mathcal{E}_h^I$ in Eq.(\ref{uniqueness5}) leads to 
	$$\sum_{e \in \mathcal{E}_h^S} \|  [ p_h - p_h'] \bn_e \|_e^2 =0.$$ 
	The jump $[v]$ is defined as
	$$
	[v]=v ~ \text{if} ~ e \subset \partial \Omega, \quad [v]=v|_{T_1} -v|_{T_2}, ~ \text{if} ~e \in \mathcal{E}_h^0,
	$$
	where $e$ is the common edge of $T_1$ and $T_2$.
	
	Moreover, when $e \in \mathcal{E}_h^I$, define the other two edges of the same cell as $e_1$ and $e_2$ and their unit outward normal vectors as $\bn_{e_1}$ and $\bn_{e_2}$. Choosing $\bv_0 = {\bf{0}}$ on $T \in \mathcal{T}_h$, $\bv_b ={\bf{0}}$ on $e \in \mathcal{E}_h^S$ and $\bv_b = [p_h -p_h'] (\bn_{e_1}+\bn_{e_2})$ on $e \in \mathcal{E}_h^I$ to derive that
	$$0= [p_h -p_h']_e^2 \langle \bn_{e_1}+\bn_{e_2},\bn_e\rangle_e.$$
	Since $\langle (\bn_{e_1}+\bn_{e_2}),\bn_e\rangle_e < 0$, we get $[ p_h -p_h']_e=0$. Noting that $p_h \in L_0^2(\Omega)$, we have $p_h - p_h' = 0$ on $\Omega$. The proof of the lemma is complete.  
\end{proof}
\section{Stability}
For $T \in \mathcal{T}_h$, let $Q_h=\{Q_0,~Q_b\}$ be the $L^2$ projection operator onto $V_h$. $\mathcal{Q}_h$ and $\mathbb{Q}_h$ are defined as the $L^2$ projection operators onto $P_{k-1}(T)$ and $[P_{k-1}(T)]^{2 \times 2}$, respectively.
\begin{lemma}\label{weak gradient exchange}
	For $\bu \in [ H^1(\Omega)]^2$ and $ \tau \in [P_{k-1}(T)]^{2 \times 2}$, on every cell $T \in \mathcal{T}_h$, we have the following properties for the discrete weak gradient operator.
	
	(1)~For $T \in \mathcal{T}_h^S $ , we have 
	\begin{eqnarray}\label{weak gradient exchange 1}
		(\nabla_w(Q_h \bu),\tau)_T=(\mathbb{Q}_h(\nabla \bu),\tau)_T.
	\end{eqnarray}
	
	(2)~For $T \in \mathcal{T}_h^I$, we have 
	\begin{eqnarray}\label{weak gradient exchange 2}
		(\nabla_w(Q_h \bu),\tau)_T=(\mathbb{Q}_h(\nabla \bu),\tau)_T+\langle  Q_b \bu -\bu , \tau \bn\rangle_{\partial T \cap \Gamma}.
	\end{eqnarray}
\end{lemma}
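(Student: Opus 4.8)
The plan is to start directly from Definition \ref{weak gradient} applied to the projected function $Q_h\bu=\{Q_0\bu,Q_b\bu\}$, and then reduce each resulting term to one involving the ordinary gradient $\nabla\bu$ by invoking the defining orthogonality of the $L^2$ projections. Fixing $\tau\in[P_{k-1}(T)]^{d\times d}$, the definition gives $(\nabla_w(Q_h\bu),\tau)_T=-(Q_0\bu,\nabla\cdot\tau)_T+\langle Q_b\bu,\tau\bn\rangle_{\partial T}$, and the whole lemma is obtained by processing the volume term and the boundary term separately.

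First I would treat the interior term. Since $\tau$ has componentwise degree at most $k-1$, its divergence $\nabla\cdot\tau$ has componentwise degree at most $k-2$ and therefore lies in $[P_k(T)]^d$, the range of $Q_0$. The defining property of the $L^2$ projection then removes $Q_0$, giving $(Q_0\bu,\nabla\cdot\tau)_T=(\bu,\nabla\cdot\tau)_T$. Next I would treat the boundary term, where the split between the two cases appears. On a straight edge $e$ the outward normal $\bn$ is constant, so $\tau\bn$ restricted to $e$ is a polynomial of degree at most $k-1$, i.e.\ an element of $[P_{k-1}(e)]^d$, which is the range of $Q_b$ on straight edges; hence $\langle Q_b\bu,\tau\bn\rangle_e=\langle\bu,\tau\bn\rangle_e$. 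For $T\in\mathcal{T}_h^S$ all edges are straight, so the whole boundary term loses $Q_b$, and combining this with the interior step and Green's formula $(\nabla\bu,\tau)_T=-(\bu,\nabla\cdot\tau)_T+\langle\bu,\tau\bn\rangle_{\partial T}$ yields $(\nabla_w(Q_h\bu),\tau)_T=(\nabla\bu,\tau)_T$. A final use of the orthogonality of $\mathbb{Q}_h$, legitimate because $\tau\in[P_{k-1}(T)]^{d\times d}$, replaces $\nabla\bu$ by $\mathbb{Q}_h(\nabla\bu)$, which is exactly \eqref{weak gradient exchange 1}.

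For $T\in\mathcal{T}_h^I$ I would split $\partial T=(\partial T\setminus\Gamma)\cup(\partial T\cap\Gamma)$. On the straight part the same commutation removes $Q_b$, but on the curved interface part the normal is not constant, so $\tau\bn$ is no longer a polynomial along the edge and $Q_b$ cannot be dropped — this is the one genuine subtlety, and it is precisely what produces the extra surface term. Writing $\langle Q_b\bu,\tau\bn\rangle_{\partial T}=\langle\bu,\tau\bn\rangle_{\partial T\setminus\Gamma}+\langle Q_b\bu,\tau\bn\rangle_{\partial T\cap\Gamma}$, inserting Green's formula as before, and using $\langle\bu,\tau\bn\rangle_{\partial T\setminus\Gamma}=\langle\bu,\tau\bn\rangle_{\partial T}-\langle\bu,\tau\bn\rangle_{\partial T\cap\Gamma}$, the full-boundary contributions cancel and what remains is $(\nabla\bu,\tau)_T+\langle Q_b\bu-\bu,\tau\bn\rangle_{\partial T\cap\Gamma}$; replacing $\nabla\bu$ by $\mathbb{Q}_h(\nabla\bu)$ gives \eqref{weak gradient exchange 2}.

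The hard part will not be any calculation but the bookkeeping of polynomial degrees that makes each projection property legitimately applicable, namely $\nabla\cdot\tau\in[P_k(T)]^d$ for $Q_0$ and $\tau\bn\in[P_{k-1}(e)]^d$ for $Q_b$ on straight edges, together with the key observation that on a curved edge this last inclusion fails because $\bn$ varies. It is exactly this failure that makes the curved cells carry the nonvanishing interface remainder $\langle Q_b\bu-\bu,\tau\bn\rangle_{\partial T\cap\Gamma}$, while the straight cells reproduce the clean commutation identity.
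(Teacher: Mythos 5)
Your proposal is correct and follows essentially the same route as the paper's own proof: apply Definition \ref{weak gradient} to $Q_h\bu$, drop $Q_0$ against $\nabla\cdot\tau$ and $Q_b$ against $\tau\bn$ on straight edges, integrate by parts, and use the orthogonality of $\mathbb{Q}_h$, with the residual term $\langle Q_b\bu-\bu,\tau\bn\rangle_{\partial T\cap\Gamma}$ surviving exactly on the interface edge. The only difference is that you spell out the polynomial-degree bookkeeping (why $\nabla\cdot\tau$ lies in the range of $Q_0$ and why $\tau\bn$ fails to be in the range of $Q_b$ on the curved edge), which the paper leaves implicit.
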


\begin{proof}For $T$ $\in \mathcal{T}_h^S$ and $\tau \in [P_{k-1}(T)]^{2 \times 2}$, according to Eq.(\ref{weak gradient}), definitions of $Q_0$, $Q_b$ and $\mathbb{Q}_h$ and integration by parts, we have
	\begin{align*}
		(\nabla_w(Q_h \bu), \tau)_T=&-(Q_0 \bu, \nabla \cdot \tau)_T+\langle Q_b \bu, \tau \cdot \bn \rangle_{\partial T}\\
		=&-(\bu,\nabla \cdot \tau)_T+\langle \bu,\tau \cdot \bn \rangle_{\partial T}\\
		=&(\nabla \bu,\tau)_T\\
		=&(\mathbb{Q}_h(\nabla \bu),\tau)_T.
	\end{align*}
	Similarly, for $T$ $\in \mathcal{T}_h^I$ and $\tau \in [P_{k-1}(T)]^{2 \times 2}$, we have
	\begin{align*}
		(\nabla_w(Q_h \bu), \tau)_T=&-(Q_0 \bu, \nabla \cdot \tau)_T+\langle Q_b \bu, \tau \cdot \bn \rangle_{\partial T}\\
		=&-(\bu,\nabla \cdot \tau)_T+\langle \bu ,\tau \cdot \bn \rangle_{\partial T}+\langle Q_b \bu - \bu ,\tau \cdot \bn \rangle_{\partial T \cap \Gamma}\\
		=&(\nabla \bu,\tau)_T+\langle Q_b \bu - \bu ,\tau \cdot \bn \rangle_{\partial T \cap \Gamma}\\
		=&(\mathbb{Q}_h(\nabla \bu),\tau)_T+\langle Q_b \bu - \bu ,\tau \cdot \bn \rangle_{\partial T \cap \Gamma}.
	\end{align*}
	The proof of Lemma \ref{weak gradient exchange} is complete.
\end{proof}

\begin{lemma}\label{weak divergence exchange}
	For $\bu \in [ H^1(\Omega)]^2$ and $\varphi \in P_{k-1}(T)$, on every cell $T \in \mathcal{T}_h$,  we have the following properties for the discrete weak divergence operator.
	
	(1)~For $T \in \mathcal{T}_h^S $, we obtain
	\begin{eqnarray}\label{weak divergence exchange 1}
		(\nabla_w \cdot (Q_h \bu),\varphi)_T=(\mathcal{Q}_h(\nabla \cdot \bu),\varphi)_T.
	\end{eqnarray}
	
	(2)~For $T \in \mathcal{T}_h^I $, we obtain
	\begin{eqnarray}\label{weak divergence exchange 2}
		(\nabla_w \cdot (Q_h \bu),\varphi)_T=(\mathcal{Q}_h(\nabla \cdot \bu),\varphi)_T+\langle  Q_b \bu -\bu , \varphi \bn\rangle_{\partial T \cap \Gamma}.
	\end{eqnarray}
\end{lemma}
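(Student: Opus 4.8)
The plan is to mirror exactly the argument already used for the weak gradient in Lemma \ref{weak gradient exchange}, replacing the matrix test function $\tau$ by a scalar $\varphi \in P_{k-1}(T)$ and the gradient/divergence pairing accordingly. The only tools needed are Definition \ref{weak divergence}, the defining properties of the $L^2$ projections $Q_0$, $Q_b$, $\mathcal{Q}_h$, and integration by parts.

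First I would fix $T \in \mathcal{T}_h$ and $\varphi \in P_{k-1}(T)$, and start from the definition of the weak divergence applied to $Q_h \bu = \{Q_0 \bu, Q_b \bu\}$:
\begin{equation*}
(\nabla_w \cdot (Q_h \bu), \varphi)_T = -(Q_0 \bu, \nabla \varphi)_T + \langle Q_b \bu, \varphi \bn \rangle_{\partial T}.
\end{equation*}
Since $\nabla \varphi \in [P_{k-2}(T)]^d \subset [P_{k-1}(T)]^d$, the defining property of $Q_0$ (the $L^2$ projection onto $[P_k(T)]^d$) lets me replace $Q_0 \bu$ by $\bu$ in the volume term, giving $-(\bu, \nabla \varphi)_T$. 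Integration by parts then converts this to $(\nabla \cdot \bu, \varphi)_T - \langle \bu \cdot \bn, \varphi \rangle_{\partial T}$, and combining with the boundary term yields
\begin{equation*}
(\nabla_w \cdot (Q_h \bu), \varphi)_T = (\nabla \cdot \bu, \varphi)_T + \langle (Q_b \bu - \bu) \cdot \bn, \varphi \rangle_{\partial T}.
\end{equation*}
Finally, because $\varphi \in P_{k-1}(T)$, the defining property of $\mathcal{Q}_h$ (projection onto $P_{k-1}(T)$) replaces $\nabla \cdot \bu$ by $\mathcal{Q}_h(\nabla \cdot \bu)$ in the volume term.

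The split into the two cases is where the boundary integral is handled. For a straight cell $T \in \mathcal{T}_h^S$, the edges carry $Q_b$ the projection onto $[P_{k-1}(e)]^d$; since $\varphi \bn \in [P_{k-1}(e)]^d$ on each edge, the defining property of $Q_b$ forces $\langle (Q_b \bu - \bu) \cdot \bn, \varphi \rangle_{\partial T} = 0$, and \eqref{weak divergence exchange 1} follows. For a curved cell $T \in \mathcal{T}_h^I$, the same cancellation holds on the straight portion $\partial T \setminus (\partial T \cap \Gamma)$ (where $Q_b$ projects onto $[P_{k-1}(e)]^d$ and $\varphi\bn$ lies in that space), but on the curved piece $\partial T \cap \Gamma$ the projection $Q_b$ is onto $[P_k(e)]^d$ and the orthogonality relation no longer annihilates the product against $\varphi \in P_{k-1}$, so that term survives and produces the residual $\langle Q_b \bu - \bu, \varphi \bn \rangle_{\partial T \cap \Gamma}$ in \eqref{weak divergence exchange 2}.

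I expect no genuine obstacle here, since the structure is identical to Lemma \ref{weak gradient exchange}; the one point demanding care is the polynomial degree bookkeeping on the edges, namely verifying that $\varphi \bn$ really does lie in the range against which $Q_b$ is orthogonal on the straight edges ($[P_{k-1}(e)]^d$) while failing to on the curved interface edges ($[P_k(e)]^d$ versus the interior pressure space $P_{k-1}$). Getting this degree-matching right is what cleanly separates the two cases and isolates the interface term.
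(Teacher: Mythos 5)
Your proposal is correct and follows essentially the same route as the paper's proof: apply Definition \ref{weak divergence} to $Q_h\bu$, replace $Q_0\bu$ and $Q_b\bu$ by $\bu$ via the $L^2$-projection properties (on all edges for straight cells, on the non-interface edges for curved cells), integrate by parts, and finish with the defining property of $\mathcal{Q}_h$, retaining the residual term on $\partial T \cap \Gamma$. One correction to your closing remark, though it does not affect the validity of the derivation: the interface term survives not because $Q_b$ projects onto the larger space $[P_k(e)]^d$ there --- since $P_{k-1}(e)\subset P_k(e)$, orthogonality to $[P_k(e)]^d$ would annihilate \emph{more} test functions, not fewer --- but because the outward normal $\bn$ is non-constant on a curved edge, so $\varphi\bn$ need not lie in $[P_k(e)]^d$ at all; your proof only needs to keep that term rather than show it is nonzero, so the identity stands as derived.
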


\begin{proof}For $T \in \mathcal{T}_h^S$ and $\varphi \in P_{k-1}(T)$, according to Eq.(\ref{weak divergence}), definitions of $Q_0$, $Q_b$ and $\mathcal{Q}_h$ and integration by parts , we get 
	\begin{align*}
		(\nabla_w \cdot (Q_h \bu),\varphi)_T=&-(Q_0 \bu, \nabla \varphi)_T+\langle Q_b \bu ,\varphi \bn \rangle_{\partial T}\\
		=&-(\bu,\nabla \varphi)_T+\langle \bu ,\varphi \bn \rangle_{\partial T}\\
		=&(\nabla \cdot \bu,\varphi)_T\\
		=&(\mathcal{Q}_h(\nabla \cdot \bu),\varphi)_T.
	\end{align*}
	Similarly, for $T \in \mathcal{T}_{h}^I$ and $\varphi \in P_{k-1}(T)$, using Eq.(\ref{weak divergence}) to derive
	\begin{align*}
		(\nabla_w \cdot (Q_h \bu),\varphi)_T=&-(Q_0 \bu,\nabla \varphi)_T+\langle Q_b \bu, \varphi \bn
		\rangle_{\partial T}\\
		=&-(\bu,\nabla \varphi)_T+\langle \bu,\varphi \bn \rangle_{\partial T}+\langle Q_b \bu -\bu ,\varphi \bn \rangle_{\partial T \cap \Gamma}\\
		=&(\nabla \cdot \bu,\varphi)_T+\langle Q_b \bu -\bu ,\varphi \bn \rangle_{\partial T \cap \Gamma}\\
		=&(\mathcal{Q}_h(\nabla \cdot \bu),\varphi )_T+\langle Q_b \bu -\bu ,\varphi \bn \rangle_{\partial T \cap \Gamma}.
	\end{align*}
	The proof of the above lemma is complete.
\end{proof}
Now, we give some important inequalities for the proof. The trace inequality and the inverse inequality are essential technique tools for the analysis. For the straight triangular cells, these inequalities have been proved in \cite{elliptic_mix}. Next we extend two inequalities to the curved cells.
\begin{lemma}(Inverse Inequality)
	For all $T \in \mathcal{T}_h^I$, $\varphi$ is the piecewise polynomial on $T$, then we have
	\begin{eqnarray}
		\|\nabla \varphi \|_T \leqslant C h_T^{-1} \|\varphi\|_T.
	\end{eqnarray}
\end{lemma}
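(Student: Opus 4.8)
The plan is to deduce the curved-cell estimate from the inverse inequality that is already available on a shape-regular straight simplex, using the circumscribed simplex $S(T)$ furnished by assumption (A4). Since $\varphi$ is a polynomial on $T$, it is the restriction of a single polynomial defined on all of $\mathbb{R}^d$, hence it is well defined on $S(T)\supseteq T$ with the same expression. Because $T\subseteq S(T)$ we have at once $\|\nabla\varphi\|_T\leqslant\|\nabla\varphi\|_{S(T)}$, and because $S(T)$ is a shape-regular simplex the classical inverse inequality of \cite{elliptic_mix} gives $\|\nabla\varphi\|_{S(T)}\leqslant C h_{S(T)}^{-1}\|\varphi\|_{S(T)}$. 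As $T\subseteq S(T)$ forces $h_T\leqslant h_{S(T)}$ and therefore $h_{S(T)}^{-1}\leqslant h_T^{-1}$, the assertion will follow as soon as the reverse passage $\|\varphi\|_{S(T)}\leqslant C\|\varphi\|_T$ from the simplex back to the smaller curved cell is established.

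Thus the whole difficulty is concentrated in the scaled $L^2$ norm equivalence $\|\varphi\|_{S(T)}\leqslant C\|\varphi\|_T$, valid for all polynomials $\varphi$ of the relevant degree with a constant $C$ independent of $T$; this is the step I expect to be the main obstacle. To prove it I would pass to a reference configuration. Let $B_T$ denote the affine map carrying a fixed reference simplex $\widehat{S}$ onto $S(T)$. Since $S(T)$ is shape-regular with $h_{S(T)}$ proportional to $h_T$, the Jacobian satisfies $|\det B_T|\sim h_T^d$ and the linear part of $B_T$ has uniformly bounded condition number. Setting $\widehat\varphi=\varphi\circ B_T$, which is again a polynomial of the same degree because $B_T$ is affine, and $\widehat T=B_T^{-1}(T)$, the change of variables yields $\|\varphi\|_{S(T)}^2=|\det B_T|\,\|\widehat\varphi\|_{\widehat S}^2$ and $\|\varphi\|_T^2=|\det B_T|\,\|\widehat\varphi\|_{\widehat T}^2$, so the Jacobian cancels and it suffices to prove $\|\widehat\varphi\|_{\widehat S}\leqslant C\|\widehat\varphi\|_{\widehat T}$ at the reference scale.

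At this point assumption (A3) is decisive. The interior ball of radius comparable to $h_T$ contained in $T$ is mapped by $B_T^{-1}$ to a ball of radius bounded below by a fixed $\rho_0>0$ contained in $\widehat T$. On the finite-dimensional space of polynomials of degree at most $k$, the functional $\widehat\varphi\mapsto\|\widehat\varphi\|_{L^2(B)}$ for a ball $B$ of radius $\rho_0$ contained in $\widehat S$ is a genuine norm, hence equivalent to $\|\cdot\|_{L^2(\widehat S)}$; a compactness argument over the bounded set of admissible ball centers makes the equivalence constant uniform. Since $\|\widehat\varphi\|_{\widehat T}\geqslant\|\widehat\varphi\|_{L^2(B)}$, this gives $\|\widehat\varphi\|_{\widehat S}\leqslant C\|\widehat\varphi\|_{\widehat T}$ and closes the chain of inequalities.

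I would deliberately avoid the alternative route of pulling $\varphi$ back to the reference cell through the non-affine isoparametric map used elsewhere in the paper: under a genuinely non-affine map the pullback of a physical polynomial is no longer a polynomial, so the reference inverse inequality cannot be applied to it directly, and one is forced to track the derivatives of the map. The circumscribed-simplex argument sidesteps this entirely, since $B_T$ is affine and the only geometric input needed is the bounded-distortion simplex of (A4) together with the interior-ball condition of (A3). The crux of the matter, and the reason these two hypotheses are required, is uniformity: a curved cell could a priori be thin or irregular, and it is precisely (A3)--(A4) that prevent the $L^2(T)$ mass of a polynomial from collapsing relative to its mass on $S(T)$, which is what would otherwise destroy the constant.
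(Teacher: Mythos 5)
Your proof follows essentially the same route as the paper's: both pass to the circumscribed simplex $S(T)$ of (A4), apply the standard inverse inequality there, and return to $T$ via an $L^2$ comparison of polynomial norms based on the interior ball of (A3). The only difference is that you actually prove the key comparison $\|\varphi\|_{S(T)}\leqslant C\|\varphi\|_T$ (affine pullback to a reference simplex plus finite-dimensional norm equivalence over admissible ball positions), whereas the paper invokes it in one line as a ``domain inverse inequality'' --- written there, in fact, with the constants dropped ($\|\varphi\|_{S(T)}\leqslant\|\varphi\|_S\leqslant\|\varphi\|_T$, which cannot hold with constant $1$ since $S\subset T\subset S(T)$) --- so your version is the careful one.
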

\begin{proof}
	For any $T \in \mathcal{T}_h^I$, assume that $S(T)$ is the circumscribed simplex satisfying the shape regularity conditions. According to the standard inverse inequality \cite{elliptic_mix} on $S(T)$, we have
	\begin{eqnarray}
		\|\nabla \varphi \|_T \leqslant \| \nabla \varphi \|_{S(T)} \leqslant Ch_{S(T)}^{-1} \|\varphi \|_{S(T)}. \label{ProofI1}
	\end{eqnarray}
	Next, consider a ball, denoted as $S$, which is contained entirely within $T$ and has a diameter that is proportional to $h_T$. Then by the domain inverse inequality \cite{elliptic_mix}, we get
	\begin{eqnarray}\label{Inverse1}
		\|\varphi\|_{S(T)} \leqslant \|\varphi\|_{S} \leqslant \|\varphi\|_T.
	\end{eqnarray}
	Substituting inequality (\ref{Inverse1}) into the inequality (\ref{ProofI1}) leads to
	\begin{eqnarray*}
		\|\nabla \varphi\|_{T} \leqslant Ch_{T}^{-1} \|\varphi\|_T.
	\end{eqnarray*}
	The proof of the inverse inequality on the curved cells is complete.
\end{proof}
\begin{lemma}(Trace Inequality)
	For any $T \in \mathcal{T}_h^I$ and $\varphi \in H^1(T)$, we have
	\begin{eqnarray}\label{trace inequality}
		\|\varphi\|_e^2 \leqslant C\left( h_T^{-1} \|\varphi\|_T^2 + h_T \|\nabla \varphi\|_T^2 \right),~~e \subset \partial T.
	\end{eqnarray}
\end{lemma}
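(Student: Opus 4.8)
The plan is to reduce the inequality on the curved cell $T$ to the ordinary trace theorem on a fixed reference configuration, following the non-affine transformation framework announced in the introduction, and then to track how each norm scales in $h_T$. First I would introduce the non-affine map $F:\hat T \to T$ carrying a fixed reference triangle $\hat T$ of unit diameter onto the curved cell $T$, arranged so that a straight reference edge $\hat e \subset \partial \hat T$ is mapped onto the curved edge $e \subset \partial T$. For $\varphi \in H^1(T)$ I set $\hat\varphi := \varphi\circ F \in H^1(\hat T)$. Because $\hat T$ is a fixed domain, the scale-free trace theorem supplies a constant $C$, independent of $T$, with
\begin{eqnarray*}
\|\hat\varphi\|_{\hat e}^2 \leqslant C\left( \|\hat\varphi\|_{\hat T}^2 + \|\hat\nabla \hat\varphi\|_{\hat T}^2\right).
\end{eqnarray*}

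Next I would pull this back to $T$ by the change of variables $\bx = F(\hat\bx)$. Writing $J = DF$ for the Jacobian, the volume element is $d\bx = |\det J|\, d\hat\bx$, the surface element on $e$ transforms by Nanson's factor $|\det J|\,|J^{-\top}\bn|$, and gradients satisfy $\hat\nabla\hat\varphi = J^{\top}(\nabla\varphi)\circ F$. Under (A1)--(A4) one establishes the uniform two-sided bounds $|\det J| \sim h_T^{d}$, $\|J\|\lesssim h_T$ and $\|J^{-1}\|\lesssim h_T^{-1}$, with constants independent of $T$; substituting these into the three change-of-variables identities converts $\|\hat\varphi\|_{\hat e}^2$, $\|\hat\varphi\|_{\hat T}^2$ and $\|\hat\nabla\hat\varphi\|_{\hat T}^2$ into quantities comparable to $h_T^{-(d-1)}\|\varphi\|_e^2$, $h_T^{-d}\|\varphi\|_T^2$ and $h_T^{-d+2}\|\nabla\varphi\|_T^2$ respectively. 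Multiplying the resulting inequality through by $h_T^{d-1}$ then yields exactly $\|\varphi\|_e^2 \leqslant C\!\left(h_T^{-1}\|\varphi\|_T^2 + h_T\|\nabla\varphi\|_T^2\right)$, which for $d=2$ is the claim.

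The change of variables itself is routine; the delicate step, and the main obstacle, is the uniform control of the non-affine Jacobian. Since $F$ is not affine, $J$ varies over $\hat T$ and one cannot read off a single matrix as in the straight-cell argument used for the inverse inequality. Here (A4) is essential: it embeds $T$ in a shape-regular circumscribed simplex $S(T)$ of diameter $\sim h_T$, and together with the lower bounds on $|T|$, $|e|$ and the interior ball furnished by (A1)--(A3) it forces $F$ to be a bi-Lipschitz homeomorphism whose $J$, $J^{-1}$ and (after the $h_T^{d}$ normalization) $\det J$ are bounded uniformly away from $0$ and $\infty$.

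An alternative I would keep in reserve, sidestepping the Jacobian analysis, is the vector-field argument: take the centre $\bx_0$ of the interior ball guaranteed by (A3), set $\bm{b}(\bx)=\bx-\bx_0$, and apply the divergence theorem to $\varphi^2\bm{b}$ over $T$. Since $\nabla\cdot\bm{b}=d$ is constant and the star-shapedness coming from (A1)--(A4) gives $\bm{b}\cdot\bn \geqslant c\,h_T$ on $e$ while $|\bm{b}|\lesssim h_T$ on all of $\partial T$, one obtains $c\,h_T\|\varphi\|_e^2 \leqslant C\!\left(h_T\|\varphi\|_T\|\nabla\varphi\|_T + \|\varphi\|_T^2\right)$, and a Young's inequality split produces the same bound. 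In this variant the obstacle merely migrates to proving the uniform lower bound $\bm{b}\cdot\bn \geqslant c\,h_T$ along the curved edge.
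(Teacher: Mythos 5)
Your proposal follows essentially the same route as the paper: both pull $\varphi$ back to a fixed reference element $\hat T$ via the non-affine map, invoke the trace inequality on $\hat T$, and convert each of $\|\varphi\|_e^2$, $\|\varphi\|_T^2$, $\|\nabla\varphi\|_T^2$ using uniform Jacobian bounds guaranteed by the regularity conditions (the paper cites Theorem 1 of its reference on non-affine isoparametric maps for exactly the scaling $|\det J|\sim h_T^d$, $\|J\|\lesssim h_T$ that you identify as the delicate step). Your scaling bookkeeping is in fact cleaner than the paper's intermediate displays, and the reserve divergence-theorem argument is a sound alternative, but no substantive difference in method separates your main plan from the published proof.
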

\begin{proof}
	For simplify, we consider curved triangular cells. Similarly, the trace inequality holds true for polygons. For $T \in \mathcal{T}_h^I$, assume that the non-affine transformation between $T$ and the reference element $\hat{T}$ is as follows
	\begin{eqnarray*}
		\left \{\begin{array}{rl}
			x=\tilde{x}(\xi, \eta),& (\xi, \eta) \in \hat{T}\\
			y=\tilde{y}(\xi, \eta),& (\xi, \eta) \in \hat{T}
		\end{array}\right.,
	\end{eqnarray*}
	and denote that $\tilde{\varphi}(\xi, \eta)=\varphi(\tilde{x}(\xi, \eta),\tilde{y}(\xi, \eta))$. $\tilde{J}$ is the corresponding Jacobian of the above mapping. Suppose that the edge $e$ possesses parametric representations given by $x = x(t)$ and $y = y(t)$, $0 \leqslant t \leqslant 1$. According to the regularity conditions and Theorem 1 in \cite{nonaffineisoparametric1}, every term of Eq.(\ref{trace inequality}) has the following estimate.  
	\begin{eqnarray*}
		\begin{split}
			\|\varphi\|_e^2 =&\int_e \varphi^2(x, y)ds\\
			=&\int_0^1 \varphi^2(x(t), y(t)) \vert e \vert dt \\
			=&\int_0^1 \tilde{\varphi}^2(\xi(t),\eta(t))\vert e \vert dt\\
			\leqslant & Ch_T\|\tilde{\varphi}\|_{\hat{e}}^2,
		\end{split}
	\end{eqnarray*}
	and
	\begin{eqnarray*}
		\begin{split}
			\|\varphi\|_T^2=&\int_T \varphi(x,y)^2 dT\\
			=&\int_{\hat{T}} \tilde{\varphi}^2 (\xi, \eta) \vert \tilde{J} \vert d\hat{T}\\
			\geqslant &C h_T^2 \int_{\hat{T}} \tilde{\varphi}^2 (\xi, \eta) d\hat{T}\\
			\geqslant &C h_T^2  {h^{-1}_{\hat{T}}}  \|\tilde{\varphi}\|_{\hat{T}}^2,
		\end{split}
	\end{eqnarray*}
	and
	\begin{eqnarray*}
		\begin{split}
			\|\nabla \varphi \|_T^2=&\int_T \left( \frac{\partial \varphi}{\partial x}(x,y)\right)^2 + \left( \frac{\partial \varphi}{\partial y}(x,y)\right)^2 dT\\
			=&\int_{\hat{T}} 
			\left( \frac{\partial \varphi}{\partial x}(\tilde{x}(\xi, \eta),\tilde{y}(\xi, \eta))\right)^2+\left(\frac{\partial \varphi}{\partial y}(\tilde{x}(\xi, \eta),\tilde{y}(\xi, \eta))\right)^2 |\tilde{J}| d \hat{T}\\
			\geqslant&\int_{\hat{T}} \left(\frac{\partial \varphi}{\partial x}(\tilde{x}(\xi, \eta),\tilde{y}(\xi, \eta))\right)^2+\left(\frac{\partial \varphi}{\partial y}(\tilde{x}(\xi, \eta),\tilde{y}(\xi, \eta))\right)^2 h_T^2 d \hat{T}\\
			\geqslant& C \int_{\hat{T}} \left( \frac{\partial \varphi}{\partial x} \frac{\partial \tilde{x}}{\partial \xi} +\frac{\partial \varphi}{\partial y} \frac{\partial \tilde{y}}{\partial \xi}\right)^2 +
			\left( \frac{\partial \varphi}{\partial x} \frac{\partial \tilde{x}}{\partial \eta} +\frac{\partial \varphi}{\partial y} \frac{\partial \tilde{y}}{\partial \eta}\right)^2 d \hat{T}\\
			=&\int_{\hat{T}}  \left(\frac{\partial \tilde{\varphi}}{\partial \xi}\right)^2+
			\left(\frac{\partial \tilde{\varphi}}{\partial \eta}\right)^2 d \hat{T}\\
			\geqslant & C h_{\hat{T}} \|\nabla \tilde{\varphi} \|_{\hat{T}}^2.
		\end{split}
	\end{eqnarray*}
	
	Therefore, according to the trace inequality \cite{elliptic_mix} on the reference element, we have 
	\begin{eqnarray*}
		\begin{split}
			\|\varphi\|_e^2 \leqslant & C h_T\|\tilde{\varphi}\|_{\hat{e}}^2\\ \leqslant& C h_T h_{\hat{T}}^{-1} \|\tilde{\varphi}\|_{\hat{T}}^2+C h_T h_{\hat{T}} \|\nabla \tilde{\varphi}\|_{\hat{T}}^2\\
			\leqslant& C h_T^{-1} \|\varphi\|_{T}^2 + Ch_T  \|\nabla \varphi\|_{T}^2 .
		\end{split}
	\end{eqnarray*}
	The proof of the trace inequality is complete.
\end{proof}
\begin{lemma}
	For $\bw \in [H^{l+1}(\Omega)]^2$, $\rho \in H^l(\Omega)$ with 
	$1 \leqslant l \leqslant k$ and $0 \leqslant s \leqslant 1$, we have the following estimates
	\begin{eqnarray}
		\sum_{T \in \mathcal{T}_h} h_T^{2s} \| \bw- Q_0 \bw\|_{T,s}^2 &\leqslant& Ch^{2(l+1)}\|\bw\|_{l+1}^2,\label{projectorestimate1}\\
		\sum_{T \in \mathcal{T}_h} h_T^{2s} \|\nabla \bw- \mathbb{Q}_h(\nabla \bw) \|_{T,s}^2 &\leqslant& C h^{2l}\|\bw\|_{l+1}^2,\label{projectorestimate2}\\
		\sum_{T \in \mathcal{T}_h} h_T^{2s} \|\rho- \mathcal{Q}_h \rho \|_{T,s}^2 &\leqslant& Ch^{2l}\|\rho\|_{l}^2 \label{projectorestimate3}.
	\end{eqnarray}
	Here $C$ represents a positive constant that remains independent of both the mesh size $h$ and the functions involved in the above estimates.
\end{lemma}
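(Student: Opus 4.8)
The plan is to prove all three inequalities by the same local-to-global mechanism: on each cell $T$ I would establish a bound of the form $\|\bw-Q_0\bw\|_{T,s}\leqslant Ch_T^{r+1-s}\|\bw\|_{T,r+1}$ for $0\leqslant s\leqslant 1$ (and the analogues $\|\nabla\bw-\mathbb{Q}_h(\nabla\bw)\|_{T,s}\leqslant Ch_T^{r-s}\|\bw\|_{T,r+1}$ and $\|\rho-\mathcal{Q}_h\rho\|_{T,s}\leqslant Ch_T^{r-s}\|\rho\|_{T,r}$), then multiply by $h_T^{2s}$, bound $h_T\leqslant h$, and sum over $\mathcal{T}_h$. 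Since each operator is an $L^2$ projection reproducing polynomials of degree at least $r$ (for $Q_0$) or $r-1$ (for $\mathbb{Q}_h$ and $\mathcal{Q}_h$), which is guaranteed by $r\leqslant k$, I would prove the first estimate carefully and only indicate the changes for the other two.

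For the local bound I would split $\mathcal{T}_h=\mathcal{T}_h^S\cup\mathcal{T}_h^I$. On a straight cell the affine map $F_T:\hat T\to T$ scales as $\|B_T\|\leqslant Ch_T$, $\|B_T^{-1}\|\leqslant Ch_T^{-1}$, $|\det B_T|\sim h_T^d$, and the $L^2$ projection onto $[P_k(T)]^d$ pulls back to the reference projection onto $[P_k(\hat T)]^d$; pulling back, applying the Bramble--Hilbert lemma on $\hat T$, and scaling the seminorms back gives the case $s=0$ directly from the best-approximation property of the projection. For $0<s\leqslant1$ I would insert a fixed interpolant $\Pi\bw\in[P_k(T)]^d$: the term $|\bw-\Pi\bw|_{T,s}$ is handled by Bramble--Hilbert, while $|\Pi\bw-Q_0\bw|_{T,s}$, being a difference of polynomials, is reduced by the inverse inequality to $h_T^{-s}\|\Pi\bw-Q_0\bw\|_T\leqslant Ch_T^{-s}\|\bw-\Pi\bw\|_T$, again controlled by Bramble--Hilbert.

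I expect the curved cells to be the only genuine difficulty. There the map $(x,y)=(\tilde x(\xi,\eta),\tilde y(\xi,\eta))$ is non-affine, so the Jacobian $\tilde J$ is no longer constant, the chain rule produces extra terms, and the projection does not pull back cleanly to the reference element. To sidestep this I would again use that $Q_0\bw$ is the best $L^2$ approximation, so $\|\bw-Q_0\bw\|_T\leqslant\|\bw-\Pi\bw\|_T$ for any $\Pi\bw\in[P_k(T)]^d$, and combine it with the non-affine scaling estimates of Theorem 1 in \cite{nonaffineisoparametric1}, which under (A1)--(A4) give $|\tilde J|\sim h_T^2$ and control the first derivatives of the transformation. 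The interpolant-plus-inverse-inequality device from the straight case, now using the curved-cell inverse inequality proved above, then upgrades the $L^2$ bound to the $H^s$ bound for $0<s\leqslant1$.

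Finally, the second and third inequalities follow verbatim with $\nabla\bw\in[H^r(\Omega)]^{d\times d}$ and $\rho\in H^r(\Omega)$ in place of $\bw$, and the projections $\mathbb{Q}_h$, $\mathcal{Q}_h$ onto degree $k-1$; since $r-1\leqslant k-1$ these reproduce $P_{r-1}$, so Bramble--Hilbert yields the factor $h_T^{r-s}$ rather than $h_T^{r+1-s}$, and summation produces the $h^{2r}$ rate. The bookkeeping of the variable Jacobian in the curved-cell seminorm scaling is the part I expect to require the most care; the remainder is the classical projection-estimate template.
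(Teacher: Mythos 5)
Your plan breaks down precisely at the step you flagged as the main difficulty, the curved cells, and the failure is structural rather than technical. Your device there is: choose a competitor $\Pi\bw\in[P_k(T)]^d$ via the non-affine map $F_T:\hat T\to T$, then invoke the best-approximation property of $Q_0$. But the push-forward of a polynomial under a non-affine transformation is not a polynomial in the physical variables, so any $\Pi\bw$ manufactured on the reference element (a polynomial in $(\xi,\eta)$ composed with $F_T^{-1}$) does not lie in $[P_k(T)]^d$, and the inequality $\|\bw-Q_0\bw\|_T\leqslant\|\bw-\Pi\bw\|_T$ has no admissible competitor. This matters because the WG space in this paper consists of polynomials in \emph{physical} coordinates on the curved cell; the non-affine map is used only for quadrature, not to define the space. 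Moreover, even if you worked entirely on $\hat T$ after pullback, the Bramble--Hilbert bound there requires controlling $\|\hat\bw\|_{r+1,\hat T}$, whose transformation back to $T$ involves derivatives of $F_T$ up to order $r+1$; the ingredients you cite from the non-affine reference ($|\tilde J|\sim h_T^2$ and first-derivative control, as used in the paper's trace-inequality proof) are insufficient for the $h^{r+1}$ rate once $r\geqslant 1$.

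The paper avoids reference maps altogether and uses condition (A4): each cell $T$ (straight or curved) has a circumscribed shape-regular simplex $S(T)$ with $h_{S(T)}\leqslant Ch_T$ and uniformly finite overlap. One extends the function onto $S(T)$, takes $\tilde Q_0$ to be the $L^2$ projection onto polynomials on $S(T)$, and observes that the restriction of $\tilde Q_0\bw$ to $T$ \emph{is} a polynomial on $T$, so best approximation on $T$ gives $\|\bw-Q_0\bw\|_T\leqslant\|\bw-\tilde Q_0\bw\|_T\leqslant\|\bw-\tilde Q_0\bw\|_{S(T)}\leqslant Ch_T^{r+1}\|\bw\|_{r+1,S(T)}$, and the finite overlap of the simplices permits summation. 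Note this also covers straight \emph{polygonal} cells uniformly, which your affine-map treatment of $\mathcal{T}_h^S$ implicitly restricts to simplices, and that the extension step is genuinely needed in the application (the velocity is only piecewise smooth, and $S(T)$ can cross $\Gamma$ or $\partial\Omega$). If you want to keep your local-to-global template, the repair is to build $\Pi\bw$ intrinsically on the physical cell -- e.g.\ the averaged Taylor polynomial of Dupont--Scott, using star-shapedness with respect to an interior ball of diameter proportional to $h_T$ (condition (A3)) -- after which your interpolant-plus-inverse-inequality upgrade from $s=0$ to $0<s\leqslant 1$, with the curved-cell inverse inequality already proved in the paper, goes through as you describe.
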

\begin{proof}
	For $T \in \mathcal{T}_h^S$, the inequalities (\ref{projectorestimate1})-(\ref{projectorestimate3}) are same as the inequalities in \cite{WGStokes1}. Therefore we only consider the curved cell $T \in \mathcal{T}_h^I$. Assume that $S(T)$ is the circumscribed simplex satisfying the shape regularity conditions. And let $u$ smoothly extend onto $S(T)$. Denote by $\tilde{Q}_0$ the $L^2$ projection operator onto $[P_k(S(T))]^2$. Then we have
	\begin{eqnarray*}
		\begin{split}
			\sum_{T \in \mathcal{T}_h}\|u-Q_0 u\|_T^2 
			\leqslant & C \sum_{T \in \mathcal{T}_h} \|u-\tilde{Q}_0 u\|_{T}^2 \\
			\leqslant& C \sum_{T \in \mathcal{T}_h} \|u-\tilde{Q}_0 u\|_{S(T)}^2\\
			\leqslant& C h_{S(T)}^{2(r+1)} \|u\|_{r+1,S(T)}^2 \\
			\leqslant &C h_{T}^{2(r+1)} \|u\|_{r+1,S(T)}^2. 
		\end{split}
	\end{eqnarray*}
	From the regularity conditions, the number of overlaps of circumscribed simplex sets is fixed. Then we derive
	\begin{eqnarray*}
		\sum_{T \in \mathcal{T}_h} \|u-Q_0 u\|_T^2 \leqslant Ch^{2(r+1)} \|u\|_{r+1}^2.
	\end{eqnarray*}
	Similarly, we get
	\begin{eqnarray*}
		\sum_{T \in \mathcal{T}_h} h_T^{2s}\|\nabla (u-Q_0 u)\|_{T,s}^2 \leqslant Ch^{2(r+1)} \|u\|_{r+1}^2.
	\end{eqnarray*}
	Therefore, the proof of Eq.(\ref{projectorestimate1}) is complete. The proof of Eqs.(\ref{projectorestimate2})-(\ref{projectorestimate3}) is quite similar to the Eq.(\ref{projectorestimate1}) and so is omitted.
\end{proof}

Next we give the stability analysis of the Stokes interface problems.
\begin{lemma}(Inf-Sup Condition)\label{InfSupCondition}
	There are two positive constants  $C_1$ and $C_2$ to satisfy the following inequality
	\begin{eqnarray}\label{InfSup}
		\sup_{\bv \in V_h^0}\frac{b(\bv,\rho)}{\3bar \bv \3bar} \geqslant C_1 \| \rho\|- C_2 h|\rho|_1,
	\end{eqnarray}
	for all $\rho \in W_h $, where $C_1$ and $C_2$ are independent of the mesh size $h$.
\end{lemma}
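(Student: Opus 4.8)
The plan is to combine the classical continuous inf--sup (LBB) condition for the Stokes problem with a Fortin-type construction based on the $L^2$ projection $Q_h$, using the commutation identities of Lemma \ref{weak divergence exchange} to transfer the discrete weak divergence back to the continuous one. The only genuinely new feature compared with the standard weak Galerkin analysis is the interface correction produced by part (2) of Lemma \ref{weak divergence exchange}; controlling this correction and showing that it contributes only at order $h|\rho|_1$ is the heart of the argument.

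First I would fix $\rho\in W_h\subset L_0^2(\Omega)$ and invoke the continuous inf--sup condition to obtain $\tilde{\bv}\in[H_0^1(\Omega)]^d$ with $\di\tilde{\bv}=\rho$ and $\|\tilde{\bv}\|_1\le C\|\rho\|$. The candidate test function is $\bv=Q_h\tilde{\bv}=\{Q_0\tilde{\bv},Q_b\tilde{\bv}\}$ (with a sign fixed below). It lies in $V_h^0$: since $\tilde{\bv}=\mathbf{0}$ on $\partial\Omega$ we get $\bv_b=\mathbf{0}$ there, and since $\tilde{\bv}$ is single valued across $\Gamma$ the two traces $\bv_{1b},\bv_{2b}$ obtained by projecting from either side coincide, so $\bv_{1b}-\bv_{2b}=\mathbf{0}$ on $e\in\mathcal{E}_h^I$. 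Next I would evaluate $b(\bv,\rho)=-\sum_T(\dw(Q_h\tilde{\bv}),\rho)_T$. On straight cells Lemma \ref{weak divergence exchange}(1) and $\rho|_T\in P_{k-1}(T)$ give $(\dw(Q_h\tilde{\bv}),\rho)_T=(\mathcal{Q}_h(\di\tilde{\bv}),\rho)_T=(\di\tilde{\bv},\rho)_T$, while on curved cells part (2) adds the boundary term $\langle Q_b\tilde{\bv}-\tilde{\bv},\rho\bn\rangle_{\partial T\cap\Gamma}$. Summing, using $(\di\tilde{\bv},\rho)_\Omega=\|\rho\|^2$ and fixing the sign of $\bv$, I obtain $b(\bv,\rho)=\|\rho\|^2-E$ with $E=\sum_{T\in\mathcal{T}_h^I}\langle Q_b\tilde{\bv}-\tilde{\bv},\rho\bn\rangle_{\partial T\cap\Gamma}$.

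In parallel I would bound the test function by $\3bar\bv\3bar\le C\|\rho\|$. This uses Lemma \ref{weak gradient exchange} with the trace and inverse inequalities to control $a(\bv,\bv)$ (the curved-cell correction in the weak gradient is absorbed by the same device), and the estimate $\|Q_b\tilde{\bv}-\tilde{\bv}\|_e^2\le Ch_T|\tilde{\bv}|_{1,T}^2$, valid because $Q_b$ is at least as accurate on $e$ as the cell constant, combined with the trace inequality and a Poincaré bound, to control the stabilizer $s(\bv,\bv)$; then $\|\tilde{\bv}\|_1\le C\|\rho\|$ closes it.

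The main obstacle is to show that $|E|$ is higher order, namely $|E|\le Ch\|\rho\|\,(\|\rho\|+|\rho|_1)$. The key is the $L^2(e)$-orthogonality $\langle Q_b\tilde{\bv}-\tilde{\bv},w\rangle_e=0$ for all $w\in[P_k(e)]^d$, since $\partial T\cap\Gamma$ is exactly the curved edge $e$ on which $Q_b$ projects. Subtracting $w_T=(P_k^e\rho)\,\bn_0\in[P_k(e)]^d$, where $P_k^e$ is the edgewise $L^2$ projection and $\bn_0$ a frozen value of the normal, I would bound $\|\rho\bn-w_T\|_e\le\|\rho-P_k^e\rho\|_e+\|\rho(\bn-\bn_0)\|_e\le Ch_e(\|\nabla\rho\|_e+\|\rho\|_e)$, the first term by a best-approximation/Poincaré bound on the edge and the second because $\|\bn-\bn_0\|_{L^\infty(e)}\le Ch_e$ (bounded curvature of the piecewise-$C^2$ interface). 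A Cauchy--Schwarz over the curved edges then splits $E$ into the factor $(\sum_{T\in\mathcal{T}_h^I}\|Q_b\tilde{\bv}-\tilde{\bv}\|_e^2)^{1/2}\le Ch^{1/2}\|\rho\|$ and the factor $(\sum_{T\in\mathcal{T}_h^I}h_e^2(\|\nabla\rho\|_e^2+\|\rho\|_e^2))^{1/2}$, which the trace and inverse inequalities applied to the polynomial $\rho$ turn into $Ch^{1/2}(|\rho|_1+\|\rho\|)$. Multiplying gives the claimed bound on $E$, so $b(\bv,\rho)\ge(1-Ch)\|\rho\|^2-Ch\|\rho\|\,|\rho|_1$. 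Dividing by $\3bar\bv\3bar\le C\|\rho\|$ and taking $h$ small enough that $1-Ch\ge\tfrac12$ yields the stated estimate with suitable $C_1,C_2$. The delicate point throughout is that the extra power of $h$ produced by the edge orthogonality is precisely what demotes the interface contribution to a higher-order perturbation; without it one would recover only a non-vanishing $O(\|\rho\|)$ defect rather than $O(h|\rho|_1)$.
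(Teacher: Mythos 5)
Your proposal is correct and follows essentially the same route as the paper: the continuous inf--sup condition supplies $\tilde{\bv}\in[H_0^1(\Omega)]^d$ with divergence $\pm\rho$, the test function is $Q_h\tilde{\bv}\in V_h^0$ bounded via Lemmas \ref{weak gradient exchange}--\ref{weak divergence exchange} and the trace/inverse inequalities so that $\3bar Q_h\tilde{\bv}\3bar\leqslant C\|\rho\|$, and the interface defect is killed to higher order by the $[P_k(e)]^d$-orthogonality of $Q_b\tilde{\bv}-\tilde{\bv}$ on curved edges, exactly as in the paper's Eq.~(\ref{InfSup2}). The one micro-difference is that you subtract the frozen-normal comparator $(P_k^e\rho)\,\bn_0$ where the paper subtracts $Q_b(\rho\bn)$; your more explicit treatment of the curvature error $\|\bn-\bn_0\|_{L^\infty(e)}\leqslant Ch_e$ surfaces an additional $Ch\|\rho\|^2$ term, whence your mild smallness assumption $1-Ch\geqslant\tfrac12$, a caveat the paper's terser bound $\|\rho\|^2-Ch\|\bv\|_1|\rho|_1$ silently elides.
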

\begin{proof}
	For $\rho \in W_h$, according to \cite{bookfiniteelementmethod1,bookfiniteelementmethod2,bookfiniteelementmethod3,bookfiniteelementmethod5,bookfiniteelementmethod4,WGStokes1}, we can find a function $\bv \in [H_0^1(\Omega)]^2$ to satisfy
	\begin{align*}
		\nabla \cdot \bv =-\rho,
	\end{align*}
	and $\| \bv \|_{1} \leq C \| \rho \|$. Here $C > 0$ is a constant depending on the domain $\Omega$. Let $\tilde{\bv}= Q_h \bv \in V_h^0$, then we need to verify the following inequality holds true:
	\begin{eqnarray}\label{InfSup1}
		\3bar \tilde{\bv} \3bar \leqslant C_0 \| \bv \|_1.
	\end{eqnarray}
	First, for $T \in \mathcal{T}_h^S$, it follows from the definition of $\3bar \cdot \3bar$ and Eq.(\ref{weak gradient exchange 1}) that 
	\begin{eqnarray}\label{InfSup1.1}
		A\| \nabla_w \tilde{\bv} \|_T^2 =A\| \nabla_w (Q_h \bv) \|_T^2 = A\|\mathbb{Q}_h(\nabla \bv) \|_T^2 \leqslant A\| \nabla \bv \|_T^2 \leqslant C
		\| \bv \|_{1,T}^2.
	\end{eqnarray}
	Next, using the trace inequality, the definition of $Q_h$ and the estimate (\ref{projectorestimate1}), we derive
	\begin{eqnarray}\label{SvvS}
		\begin{split}
			s(\tilde{\bv},\tilde{\bv})|_T
			=&Ah_T^{-1}\|Q_b (Q_0 \bv) -Q_b \bv \|_{\partial T}^{2}\\
			\leqslant &C h_T^{-1}\|Q_0 \bv-\bv \|_{\partial T}^{2}\\
			\leqslant &C\left( h_T^{-2}\|Q_0 \bv-\bv \|_T^2+\|\nabla(Q_0 \bv-\bv) \|_T^2 \right)\\
			\leqslant &C\| \bv \|_{1,T}^2.
		\end{split}
	\end{eqnarray}
	For $T \in \mathcal{T}_{h}^I$, by Eq.(\ref{weak gradient exchange 2}), the Cauchy-Schwarz inequality and the trace inequality, we have
	\begin{eqnarray}
		\begin{split}
			\| \nabla_w \tilde{\bv} \|_T^2 
			=&(\mathbb{Q}_h(\nabla \bv), \nabla_w \tilde{\bv})_T + \langle Q_b \bv - \bv, (\nabla_w \tilde{\bv})\bn\rangle_{\partial T \cap \Gamma}\\
			\leqslant &
			\| \mathbb{Q}_h (\nabla \bv) \|_T \| \nabla_w \tilde{\bv} \|_T+\|Q_b \bv -\bv \|_{\partial T \cap \Gamma} 
			\| \nabla_w \tilde{\bv} \|_{\partial T \cap \Gamma}\\
			\leqslant & \| \nabla \bv \|_T \| \nabla_w \tilde{\bv} \|_T+ C \|Q_0 \bv -\bv \|_{\partial T \cap \Gamma}\| \nabla_w \tilde{\bv} \|_{\partial T \cap \Gamma}\\
			\leqslant & C \| \bv\|_{1,T} \| \nabla_w \tilde{\bv}\|_T,
		\end{split}
	\end{eqnarray}
	i.e.
	\begin{eqnarray}
		\| \nabla_w \tilde{\bv} \|_T \leqslant C \| \bv \|_{1,T}.
	\end{eqnarray}
	For stabilization on $T \in \mathcal{T}_h^I$, according to the property of the $L^2$ projection operator, the trace inequality and the estimate (\ref{projectorestimate1}), we get
	\begin{eqnarray}\label{InfSup1.2}
		\begin{split}
			s(\tilde{\bv},\tilde{\bv})|_T=&A h_T^{-1}\|Q_b(Q_0 \bv)-Q_b \bv \|_{\partial T \backslash (\partial T \cap \Gamma)}^2+A h_T^{-1}\|Q_0\bv - Q_b\bv \|_{\partial T \cap \Gamma}^2\\
			\leqslant&C h_T^{-1} \|Q_0 \bv-\bv\|^2_{\partial T \backslash (\partial T \cap \Gamma)}+C h_T^{-1}\| Q_0\bv - \bv \|^2_{\partial T \cap \Gamma}+C h_T^{-1} \|\bv -Q_b \bv \|^2_{\partial T \cap \Gamma}\\
			\leqslant & C\| \bv \|_{1,T}^2.
		\end{split}
	\end{eqnarray}
	Consequently, combining inequalities (\ref{InfSup1.1})-(\ref{InfSup1.2}) yields inequality (\ref{InfSup1}). 
	
	The next step in the proof is to estimate $b(\bv,\rho)$. For all $\rho \in W_h$, based on Lemma \ref{weak divergence exchange} and the definition of $\mathcal{Q}_h$, it's easy to see that 
	\begin{eqnarray}\label{InfSup2}
		\begin{split}
			&-\sum_{T \in \mathcal{T}_h} (\nabla_w \cdot (Q_h \bv),\rho)_T\\
			=& - \sum_{T \in \mathcal{T}_h} (\mathcal{Q}_h(\nabla \cdot \bv),\rho)_T - \sum_{e \in \Gamma} \langle Q_b \bv -\bv, \rho \bn \rangle_e\\
			=&-\sum_{T \in \mathcal{T}_h} (\nabla \cdot \bv,\rho)_T -\sum_{e \in \Gamma} \langle Q_b \bv -\bv, \rho \bn \rangle_e\\
			=&\|\rho\|^2-\sum_{e \in \Gamma} \langle Q_b\bv-\bv,
			\rho \bn-Q_b(\rho \bn) \rangle_e\\
			\geqslant& \| \rho \|^2 - Ch \| \bv \|_1 |\rho|_1,
		\end{split}
	\end{eqnarray}
	where we have used the fact that $\sum_{e \in \Gamma} \langle Q_b \bv -\bv, Q_b(\rho \bn) \rangle_e=0$. Combining inequality (\ref{InfSup1}) with inequality (\ref{InfSup2}), we have
	$$
	\frac{b(\bv,p)}{\3bar \bv \3bar} \geqslant \frac{\| \rho \|^2 - Ch \| \bv \|_1 |\rho|_1}{C_0  \|\bv \|_1}
	\geqslant C_1 \Vert \rho \Vert - C_2 h\vert \rho \vert_1.
	$$
	This indicates that inequality (\ref{InfSup}) holds true.
\end{proof}
\section{Error Equations}In this section, we present the error equations for $\bu$ and $p$. We use $(\bu_h , p_h)$ to represent the numerical solutions obtained from the WG scheme. At the same time, denote by $(\bu , p)$ the exact solutions of Eqs.(\ref{model 1})-(\ref{interface 2}). The errors associated with $\bu$ and $p$ are defined as follows:
\begin{eqnarray}
	e_h=Q_h \bu - \bu_h, \quad \varepsilon_h=\mathcal{Q}_h p-p_h.
\end{eqnarray}

\begin{lemma}\label{EE}
	For $(\bu_i, p_i)\in [H^1(\Omega_i)]^2 \times L^2(\Omega_i)$ with $i=1,2$ and $p \in L_0^2(\Omega)$ satisfying Eqs. (\ref{model 1})-(\ref{interface 2}), we derive the following equation:
	\begin{eqnarray}\label{EEE}
		\begin{split}
			a(Q_h \bu, \bv)+b(\bv, \mathcal{Q}_h p)
			=&(\bbf,\bv_0)+\sum_{e \in \Gamma} \langle \bm{\psi}, \bv_b  \rangle_e+\ell_1(\bu,\bv)
			-\ell_2(p,\bv)+\ell_3(\bu,\bv),
		\end{split}
	\end{eqnarray}
	where
	\begin{eqnarray}
		\ell_1(\bu,\bv)&=&\sum_{T \in \mathcal{T}_h} \langle \bv_0 -\bv_b, A \nabla \bu \cdot \bn-A \mathbb{Q}_h(\nabla \bu) \cdot \bn \rangle_{\partial T},\\
		\ell_2(p,\bv)&=&\sum_{T \in \mathcal{T}_h} \langle \bv_0 -\bv_b, p\bn-(\mathcal{Q}_h p) \bn \rangle_e,\\
		\ell_3(\bu, \bv)&=&\sum_{i=1}^2 \sum_{e \in \Gamma} \langle Q_b \bu_i -\bu_i,A_i \nabla_w \bv_i \cdot \bn_i  \rangle_e.
	\end{eqnarray}
\end{lemma}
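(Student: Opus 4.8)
The plan is to expand the left-hand side $a(Q_h\bu,\bv)+b(\bv,\mathcal{Q}_h p)$ cell by cell. First I would use the commutativity Lemmas \ref{weak gradient exchange} and \ref{weak divergence exchange} to trade the weak operators acting on $Q_h\bu$ for the classical operators applied to $\bu$; this is exactly what produces the interface term $\ell_3$. Then I would use the \emph{definitions} of $\nabla_w$ and $\nabla_w\cdot$ to integrate by parts against the test function $\bv$, which produces $\ell_1$, $\ell_2$ and the volume integral that will match the momentum equation. Throughout I take $\bv\in V_h^0$, so that $\bv_b=\mathbf{0}$ on $\partial\Omega$ and $\bv_{1b}=\bv_{2b}$ on $\Gamma$; these two constraints are what the boundary bookkeeping ultimately relies on.

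For the $a$-term, by symmetry of $A$ I write $a(Q_h\bu,\bv)=\sum_{T}(\nabla_w(Q_h\bu),A\nabla_w\bv)_T$ and apply Lemma \ref{weak gradient exchange} with $\tau=A\nabla_w\bv\in[P_{k-1}(T)]^{d\times d}$, which is legitimate since $A$ is piecewise constant. The interface contributions from the curved cells assemble precisely into $\ell_3(\bu,\bv)$, while the volume part becomes $\sum_T(\mathbb{Q}_h(\nabla\bu),A\nabla_w\bv)_T=\sum_T(A\nabla\bu,\nabla_w\bv)_T$, the projection $\mathbb{Q}_h$ dropping because $A\nabla_w\bv$ is already a polynomial. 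Next I rewrite $\nabla_w\bv$ through its definition in the form $(\nabla_w\bv,q)_T=(\nabla\bv_0,q)_T-\langle\bv_0-\bv_b,q\bn\rangle_{\partial T}$ (as derived in the norm lemma), take $q=\mathbb{Q}_h(A\nabla\bu)$, and then integrate $(A\nabla\bu,\nabla\bv_0)_T$ by parts classically. This yields $-(\nabla\cdot(A\nabla\bu),\bv_0)_T+\langle A\nabla\bu\cdot\bn,\bv_b\rangle_{\partial T}$ plus the leftover $\langle\bv_0-\bv_b,(A\nabla\bu-A\mathbb{Q}_h(\nabla\bu))\bn\rangle_{\partial T}$, which sums to $\ell_1(\bu,\bv)$.

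The $b$-term is handled analogously: using $(\nabla_w\cdot\bv,\varphi)_T=(\nabla\cdot\bv_0,\varphi)_T-\langle(\bv_0-\bv_b)\cdot\bn,\varphi\rangle_{\partial T}$ with $\varphi=\mathcal{Q}_h p$, dropping $\mathcal{Q}_h$ against the polynomial $\nabla\cdot\bv_0$, and integrating $(\nabla\cdot\bv_0,p)_T$ by parts gives $b(\bv,\mathcal{Q}_h p)=\sum_T(\bv_0,\nabla p)_T-\sum_T\langle\bv_b,p\bn\rangle_{\partial T}-\ell_2(p,\bv)$. Adding the two contributions, the volume integrals combine into $\sum_T(-\nabla\cdot(A\nabla\bu)+\nabla p,\bv_0)_T=(\bbf,\bv_0)$ by equation (\ref{model 1}), and among the edge integrals only $\sum_T\langle(A\nabla\bu-pI)\bn,\bv_b\rangle_{\partial T}$ survives alongside $\ell_1-\ell_2+\ell_3$.

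The hard part will be the bookkeeping of this surviving edge integral $\sum_T\langle(A\nabla\bu-pI)\bn,\bv_b\rangle_{\partial T}$, which I would split over three families of edges. On an interior non-interface edge the two adjacent cells lie in the same subdomain, so $A\nabla\bu-pI$ is continuous, $\bv_b$ is single-valued, and the opposite normals force cancellation. On a boundary edge $\bv_b=\mathbf{0}$. On an interface edge $e\subset\Gamma$ the two one-sided traces add, and since $\bv_{1b}=\bv_{2b}=:\bv_b$ for $\bv\in V_h^0$, the interface condition (\ref{interface 2}) collapses the pair into $\langle\bm{\psi},\bv_b\rangle_e$; summing over $\Gamma$ produces the claimed term $\sum_{e\in\Gamma}\langle\bm{\psi},\bv_b\rangle_e$. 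Care is needed to keep the subdomain data $A_i,p_i,\bn_i$ consistent across the interface, and to note that although the statement only lists $(\bu_i,p_i)\in[H^1(\Omega_i)]^2\times L^2(\Omega_i)$, the classical integrations by parts and the normal traces $A\nabla\bu\cdot\bn$, $p\bn$ implicitly require enough elementwise smoothness of the exact solution to be well defined.
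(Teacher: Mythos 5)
Your proposal is correct and follows essentially the same route as the paper's own proof: the exchange Lemmas \ref{weak gradient exchange} and \ref{weak divergence exchange} produce $\ell_3$, the definitions of $\nabla_w$ and $\nabla_w\cdot$ plus elementwise integration by parts produce $\ell_1$ and $\ell_2$, and the surviving edge sums are resolved by single-valuedness of $\bv_b$, flux continuity within each subdomain, $\bv_b=\mathbf{0}$ on $\partial\Omega$, and the interface condition (\ref{interface 2}). The only difference is organizational — the paper tests Eq.~(\ref{model 1}) with $\bv_0$ and integrates by parts (its Eqs.~(\ref{EEP3})--(\ref{EEP6})) before combining, whereas you push the expansions of $a$ and $b$ down to the strong-form residual and invoke the PDE at the end — and your closing remark about the extra elementwise regularity implicitly needed for the traces is a fair observation that applies to the paper's proof as well.
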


\begin{proof}
	It follows from Lemma \ref{weak gradient exchange}, the definition of the weak gradient operator and integration by parts that
	\begin{eqnarray}\label{EEP1}
		\begin{split}
			&(\nabla_w (Q_h \bu),A \nabla_w \bv)\\
			=&\sum_{i=1}^2 \sum_{T \in \mathcal{T}_h^i} (\nabla_w (Q_h \bu_i),A_i \nabla_w \bv_i)_T\\
			=&\sum_{i=1}^2 \sum_{T \in \mathcal{T}_h^i}(\mathbb{Q}_h(\nabla \bu_i),A_i \nabla_w \bv_i)_T
			+\sum_{i=1}^2\sum_{T \in \mathcal{T}_h^i} \langle Q_b \bu_i -\bu_i,A_i \nabla_w \bv_i \cdot \bn_i \rangle_{\partial T \cap \Gamma}\\
			=&\sum_{i=1}^2 \sum_{T \in \mathcal{T}_h^i} \left(-(\bv_{i,0},\nabla \cdot (A_i \mathbb{Q}_h (\nabla \bu_i)))_T+\langle \bv_{i,b},
			A_i \mathbb{Q}_h (\nabla \bu_i) \cdot \bn_i \rangle_{\partial T} \right)\\
			&+\sum_{i=1}^2 \sum_{T \in \mathcal{T}_h} \langle Q_b \bu_i -\bu_i,A_i \nabla_w \bv_i \cdot \bn_i \rangle_{\partial T \cap \Gamma}\\
			=&\sum_{i=1}^2 \sum_{T \in \mathcal{T}_h^i} \left( (\nabla \bv_{i,0} ,A_i \nabla \bu_i )_T - \langle \bv_{i,0} -\bv_{i,b},
			A_i \mathbb{Q}_h (\nabla \bu_i) \cdot \bn \rangle_{\partial T} \right)\\
			&+\sum_{i=1}^2 \sum_{T \in \mathcal{T}_h} \langle Q_b \bu_i -\bu_i, A_i \nabla_w \bv_i \cdot \bn_i \rangle_{\partial T \cap \Gamma}.\\
		\end{split}
	\end{eqnarray}
	Similarly, using the definition of the weak divergence operator and integration by parts, we have
	\begin{eqnarray}\label{EEP2}
		\begin{split}
			&-(\nabla_w \cdot \bv, \mathcal{Q}_h p )\\
			=&-\sum_{i=1}^2\sum_{T \in \mathcal{T}_h^i} (\nabla_w \cdot \bv_i, \mathcal{Q}_h p_i )_T\\
			=&\sum_{i=1}^2\sum_{T \in \mathcal{T}_h^i} (\bv_{i,0},\nabla(\mathcal{Q}_h p_i ))_T-\langle \bv_{i,b},(\mathcal{Q}_h p_i) \bn_i \rangle_{\partial T}\\
			=&\sum_{i=1}^2\sum_{T \in \mathcal{T}_h^i} -(\nabla \cdot \bv_{i,0},\mathcal{Q}_h p_i)_T+\langle \bv_{i,0} -\bv_{i,b}, (\mathcal{Q}_h p_i)\bn_i \rangle_{\partial T}\\
			=&\sum_{i=1}^2\sum_{T \in \mathcal{T}_h^i} -(\nabla \cdot \bv_{i,0},p_i)_T+\langle \bv_{i,0} -\bv_{i,b}, (\mathcal{Q}_h p_i)\bn_i \rangle_{\partial T}.
		\end{split}
	\end{eqnarray}
	Then integrate with $\bv_0$ in $\bv=\{ \bv_0, \bv_b\} \in V_h^0$ on two sides of Eq.(\ref{model 1}), we obtain
	\begin{eqnarray}\label{EEP3}
		-(\nabla \cdot (A \nabla \bu),\bv_0)+(\nabla p, \bv_0)=(\bbf, \bv_0).
	\end{eqnarray}
	Using integration by parts, we get
	\begin{eqnarray}\label{EEP4}
		\begin{split}
			&-(\nabla \cdot (A \nabla \bu),\bv_0)\\
			=&\sum_{i=1}^2 \sum_{T \in \mathcal{T}_h^i} (-\nabla \cdot (A_i \nabla \bu_i),\bv_{i,0})_T\\
			=&\sum_{i=1}^2 \sum_{T \in \mathcal{T}_h^i}(A_i\nabla \bu_i, \nabla \bv_{i,0})-\sum_{T \in \mathcal{T}_h} \langle A_i\nabla \bu_i \cdot \bn_i ,\bv_{i,0} \rangle_{\partial T}\\
			=&\sum_{i=1}^2 \sum_{T \in \mathcal{T}_h^i} \left((A_i\nabla \bu_i, \nabla \bv_{i,0})_T- \langle \bv_{i,0} - \bv_{i,b} ,A_i \nabla \bu_i \cdot \bn_i \rangle_{\partial T} \right)\\
			&-\sum_{e \in \Gamma} \langle \bv_{i,b} , A_1 \nabla \bu_1 \cdot \bn_1 + A_2 \nabla \bu_2 \cdot \bn_2 \rangle_e,\\
		\end{split}
	\end{eqnarray}
	where we have used the fact that $\sum_{e \in \mathcal{E}_h^S} \langle \bv_b,\nabla \bu \cdot \bn\rangle_e=0$.
	
	Then by integration by parts and the fact that $\sum_{e \in \mathcal{E}_h^S} \langle \bv_b, p \bn \rangle_e =0$, we have
	\begin{align}
		\begin{split}
			(\nabla p, \bv_0)
			=&\sum_{i=1}^2 \sum_{T \in \mathcal{T}_h^i} (\nabla p_i, \bv_{i,0})_T\\
			=&\sum_{i=1}^2 \sum_{T \in \mathcal{T}_h^i}(-(\nabla \cdot \bv_{i,0},p_i)_T+ \langle \bv_{i,0}, p_i \bn \rangle_{\partial T}) \label{EEP5}\\
			=&\sum_{i=1}^2 \sum_{T \in \mathcal{T}_h^i} \left(-(\nabla \cdot \bv_{i,0}, p_i)_T +\langle \bv_{i,0}-\bv_{i,b}, p_i\bn \rangle_{\partial T} \right)+\sum_{e \in \Gamma} \langle \bv_{i,b} , p_1 \bn_1 +p_2 \bn_2 \rangle_e.
		\end{split}
	\end{align}
	Substituting Eq.(\ref{EEP4}) and Eq.(\ref{EEP5}) into Eq.(\ref{EEP3}) and using the interface condition (\ref{interface 2}), we have 
	\begin{eqnarray}\label{EEP6}
		\begin{split}
			&\sum_{T \in \mathcal{T}_h} \left((A \nabla \bu, \nabla \bv_0)_T -\langle \bv_0 -\bv_b, A \nabla \bu \cdot \bn  \rangle_{\partial T} -(\nabla \cdot \bv_0, p)_T+\langle \bv_0 -\bv_b, p \bn \rangle_{\partial T} \right)\\
			= &(\bbf,\bv_0)+\sum_{e \in \Gamma} \langle \bv_b, \bm{\psi} \rangle_e.
		\end{split}
	\end{eqnarray}
	Adding Eq.(\ref{EEP2}) to Eq.(\ref{EEP1}), in light of Eq.(\ref{EEP6}), the proof of Eq.(\ref{EEE}) is complete.
\end{proof}

\begin{lemma}\label{error equation}
	For the $(\bu_i, p_i)\in [H^1(\Omega_i)]^2 \times L^2(\Omega_i)$ with $i=1,2$ and $p \in L_0^2(\Omega)$ satisfying Eqs.(\ref{model 1})-(\ref{interface 2}), the errors $\be_h$ and $\varepsilon_h$ satisfy the following equations: 
	\begin{eqnarray}
		a_s(\be_h,\bv)+b(\bv,\varepsilon_h)&=&\ell_1(\bu,\bv)-\ell_2(p,\bv)+\ell_3(\bu,\bv)+s(Q_h \bu ,\bv),\label{error equation 1}\\
		b(\be_h,q)&=&-\ell_4(\bu,q), \label{error equation 2}
	\end{eqnarray}
	where
	\begin{eqnarray*}
		\ell_4(\bu,q)=\sum_{i=1}^2\sum_{e \in \Gamma }\langle Q_b \bu_i -\bu_i, q_i\bn_i \rangle_e,
	\end{eqnarray*}
	for any $\bv \in V_h^0$ and $q \in W_h$.
\end{lemma}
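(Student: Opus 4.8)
The plan is to derive both error equations directly from Lemma~\ref{EE}, the weak Galerkin scheme (\ref{algorithm 1})--(\ref{algorithm 2}), and the weak divergence identity of Lemma~\ref{weak divergence exchange}, using only the linearity of the bilinear forms. No new estimates are needed; the argument is purely algebraic bookkeeping.

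For the first equation (\ref{error equation 1}), I would substitute $\be_h = Q_h\bu - \bu_h$ and $\varepsilon_h = \mathcal{Q}_h p - p_h$ and split by linearity, writing $a_s = a + s$ on the $Q_h\bu$ term while keeping $a_s(\bu_h,\bv)$ intact so it matches the scheme:
\begin{eqnarray*}
a_s(\be_h,\bv)+b(\bv,\varepsilon_h)
=\big[a(Q_h\bu,\bv)+b(\bv,\mathcal{Q}_h p)\big]+s(Q_h\bu,\bv)
-\big[a_s(\bu_h,\bv)+b(\bv,p_h)\big].
\end{eqnarray*}
The first bracket is exactly the left-hand side of Lemma~\ref{EE}, hence equals $(\bbf,\bv_0)+\sum_{e\in\Gamma}\langle\bm{\psi},\bv_b\rangle_e+\ell_1(\bu,\bv)-\ell_2(p,\bv)+\ell_3(\bu,\bv)$; the last bracket is the left-hand side of (\ref{algorithm 1}) and equals $(\bbf,\bv_0)+\langle\bm{\psi},\bv_b\rangle_\Gamma$ because $\bv\in V_h^0$. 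Subtracting, the source terms $(\bbf,\bv_0)$ and the interface traction terms cancel identically, leaving precisely $\ell_1-\ell_2+\ell_3+s(Q_h\bu,\bv)$, which is (\ref{error equation 1}).

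For the second equation (\ref{error equation 2}), I would invoke (\ref{algorithm 2}) to see that $b(\bu_h,q)=0$, so $b(\be_h,q)=b(Q_h\bu,q)=-\sum_{T\in\mathcal{T}_h}(\nabla_w\cdot(Q_h\bu),q)_T$. Applying Lemma~\ref{weak divergence exchange} on the straight and curved cells separately gives
\begin{eqnarray*}
b(Q_h\bu,q)=-\sum_{T\in\mathcal{T}_h}(\mathcal{Q}_h(\nabla\cdot\bu),q)_T
-\sum_{T\in\mathcal{T}_h^I}\langle Q_b\bu-\bu,q\bn\rangle_{\partial T\cap\Gamma}.
\end{eqnarray*}
Since $q\in P_{k-1}(T)$ and $\mathcal{Q}_h$ is the $L^2$-projection onto $P_{k-1}(T)$, the first sum reduces to $\sum_T(\nabla\cdot\bu,q)_T$, which vanishes by the incompressibility constraint $\nabla\cdot\bu=0$ in (\ref{model 1})--(\ref{interface 2}). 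It then remains only to regroup the curved-cell boundary contributions over the interface edges $e\in\Gamma$ and over the two subdomains $i=1,2$ with their outward normals $\bn_i$, which yields exactly $-\ell_4(\bu,q)$.

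The main point requiring care is the interface bookkeeping in the second equation: the cell-by-cell boundary integrals $\langle Q_b\bu-\bu,q\bn\rangle_{\partial T\cap\Gamma}$ must be assembled into the subdomain-indexed interface sum defining $\ell_4$, with consistent orientation of $\bn_1$ and $\bn_2$ on each shared interface edge. I would verify that the sign and normal conventions agree with those already fixed for the analogous $\ell_3$ term in Lemma~\ref{EE}, since the same regrouping underlies both. Otherwise the derivation is a routine combination of the scheme and the two exchange lemmas.
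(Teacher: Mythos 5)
Your proposal is correct and follows essentially the same route as the paper: equation (\ref{error equation 1}) is obtained by adding $s(Q_h\bu,\bv)$ to the identity of Lemma~\ref{EE} and subtracting the scheme equation (\ref{algorithm 1}), while (\ref{error equation 2}) follows from (\ref{algorithm 2}) together with Lemma~\ref{weak divergence exchange} and the incompressibility of $\bu$, with the curved-cell boundary terms regrouped into $\ell_4$. Your sign bookkeeping in the weak divergence step is in fact cleaner than the paper's (which carries a harmless sign typo in front of the vanishing $\mathcal{Q}_h(\nabla\cdot\bu_i)$ term), so nothing further is needed.
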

\begin{proof}
	Since the exact solution $(\bu, p)$ satisfies the Eqs.(\ref{model 1})-(\ref{interface 2}), according to Lemma \ref{EE}, we have
	\begin{align*}
		a(Q_h \bu, \bv)+b(\bv, \mathcal{Q}_h p)
		=&(\bbf,\bv_0)+\sum_{e \in \Gamma} \langle \bm{\psi}, \bv_b  \rangle_e+\ell_1(\bu,\bv)-\ell_2(p,\bv)+\ell_3(\bu,\bv).
	\end{align*}
	Adding $s(Q_h \bu, \bv)$ to both sides of the above equation and subtracting from Eq.(\ref{algorithm 1}) leads to Eq.(\ref{error equation 1}). By incorporating with Lemma \ref{weak divergence exchange}, we have
	\begin{eqnarray}\label{eep6}
		\begin{split}
			-(\nabla_w \cdot (Q_h \bu), q)=&\sum_{i=1}^2 \sum_{T \in \mathcal{T}_h^i} -(\nabla_w \cdot (Q_h \bu_i), q_i)_T\\
			=&\sum_{i=1}^2 \sum_{T \in \mathcal{T}_h} (\mathcal{Q}_h(\nabla \cdot \bu_i) ,q_i)_T-\sum_{i=1}^2 \sum_{e \in \Gamma }\langle Q_b \bu_i -\bu_i, q_i\bn_i \rangle_e\\
			=&\sum_{i=1}^2 \sum_{e \in \Gamma }-\langle Q_b \bu_i -\bu_i, q_i\bn_i \rangle_e.
		\end{split}
	\end{eqnarray}
	Then, subtracting Eq.(\ref{eep6}) from Eq.(\ref{algorithm 2}), we derive
	\begin{eqnarray}
		b(\be_h,q)=-\ell_4(\bu,q).
	\end{eqnarray}
	Hence the error equations are proved.
\end{proof}

\section{Error Estimate in the Energy Norm}
In this section, we establish optimal order estimates for error $\be_h$ of velocity function and error $\varepsilon_h$ of pressure function in the energy norm.
\begin{lemma} \cite{WGStokes1}
	For any $\bv=\{ \bv_0, \bv_b\} \in V_h$ and $T \in \mathcal{T}_h^S $, we obtain
	\begin{eqnarray}\label{estimate 1}
		\| \nabla \bv_0 \|^2_T \leqslant C \3bar \bv \3bar^2.
	\end{eqnarray}
\end{lemma}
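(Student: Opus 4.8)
The plan is to express the classical interior gradient $\nabla\bv_0$ through the weak gradient and the boundary jump, then test the resulting identity against $\nabla\bv_0$ itself. First I would integrate by parts in the defining identity of the weak gradient (Definition \ref{weak gradient}): since $-(\bv_0,\nabla\cdot q)_T=(\nabla\bv_0,q)_T-\langle\bv_0,q\cdot\bn\rangle_{\partial T}$, the definition rearranges to
\[
(\nabla\bv_0,q)_T=(\nabla_w\bv,q)_T+\langle\bv_0-\bv_b,q\cdot\bn\rangle_{\partial T},\qquad\forall\,q\in[P_{k-1}(T)]^{d\times d}.
\]
Because $\bv_0\in[P_k(T)]^d$, its gradient $\nabla\bv_0$ lies in $[P_{k-1}(T)]^{d\times d}$, so I may legitimately choose $q=\nabla\bv_0$, obtaining
\[
\|\nabla\bv_0\|_T^2=(\nabla_w\bv,\nabla\bv_0)_T+\langle\bv_0-\bv_b,\nabla\bv_0\cdot\bn\rangle_{\partial T}.
\]

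Next I would treat the boundary term using that $T$ is a straight cell, so every edge $e\subset\partial T$ belongs to $\mathcal{E}_h^S$ and $\nabla\bv_0\cdot\bn|_e\in[P_{k-1}(e)]^d$. The defining property of the projection $Q_b$ onto $[P_{k-1}(e)]^d$ then gives $\langle\bv_0,\nabla\bv_0\cdot\bn\rangle_e=\langle Q_b\bv_0,\nabla\bv_0\cdot\bn\rangle_e$, whence $\langle\bv_0-\bv_b,\nabla\bv_0\cdot\bn\rangle_{\partial T}=\langle Q_b\bv_0-\bv_b,\nabla\bv_0\cdot\bn\rangle_{\partial T}$. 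Applying Cauchy--Schwarz on $\partial T$, then the trace inequality for $\nabla\bv_0$ combined with the inverse inequality $\|\nabla(\nabla\bv_0)\|_T\leqslant Ch_T^{-1}\|\nabla\bv_0\|_T$, I would obtain $\|\nabla\bv_0\|_{\partial T}\leqslant Ch_T^{-1/2}\|\nabla\bv_0\|_T$, so that
\[
\langle Q_b\bv_0-\bv_b,\nabla\bv_0\cdot\bn\rangle_{\partial T}\leqslant C\,h_T^{-1/2}\|Q_b\bv_0-\bv_b\|_{\partial T}\,\|\nabla\bv_0\|_T=C\,s(\bv,\bv)|_T^{1/2}\,\|\nabla\bv_0\|_T,
\]
where the last identity uses that on a straight cell $s(\bv,\bv)|_T=h_T^{-1}\|Q_b\bv_0-\bv_b\|_{\partial T}^2$.

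Combining this with the Cauchy--Schwarz bound $(\nabla_w\bv,\nabla\bv_0)_T\leqslant\|\nabla_w\bv\|_T\|\nabla\bv_0\|_T$ and cancelling one factor of $\|\nabla\bv_0\|_T$ yields $\|\nabla\bv_0\|_T\leqslant\|\nabla_w\bv\|_T+C\,s(\bv,\bv)|_T^{1/2}$, hence $\|\nabla\bv_0\|_T^2\leqslant C\big(\|\nabla_w\bv\|_T^2+s(\bv,\bv)|_T\big)$. Finally, since $A$ is uniformly positive definite we have $\|\nabla_w\bv\|_T^2\leqslant k_1^{-1}(A\nabla_w\bv,\nabla_w\bv)_T$, and both remaining pieces are dominated by $a_s(\bv,\bv)=\3bar\bv\3bar^2$, which gives the claim.

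The step requiring care---and the reason the lemma is restricted to straight cells---is the projection identity in the boundary term. It succeeds precisely because on a straight edge $Q_b$ maps onto $[P_{k-1}(e)]^d$, matching both the polynomial degree of $\nabla\bv_0\cdot\bn$ and the form of the stabilizer $s$, so the jump $\bv_0-\bv_b$ can be replaced by $Q_b\bv_0-\bv_b$ at no cost. On a curved cell $Q_b$ projects onto $[P_k(e)]^d$ and the stabilizer uses $\bv_0-\bv_b$ directly, so this identification breaks down; that is the obstacle one would have to circumvent to extend the estimate beyond $\mathcal{T}_h^S$.
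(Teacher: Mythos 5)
Your proof is correct, and it is essentially the standard argument for this lemma, which the paper itself does not prove but merely quotes from \cite{WGStokes1}: integrate by parts in the definition of $\nabla_w$ to get $(\nabla\bv_0,q)_T=(\nabla_w\bv,q)_T+\langle\bv_0-\bv_b,q\cdot\bn\rangle_{\partial T}$, take $q=\nabla\bv_0$, replace $\bv_0$ by $Q_b\bv_0$ in the boundary term (valid because on a straight edge $\bn$ is constant, so $\nabla\bv_0\cdot\bn\in[P_{k-1}(e)]^d$ lies in the range of $Q_b$), and finish with the trace and inverse inequalities together with the uniform positivity of $A$. Your closing observation about why the identification $\langle\bv_0-\bv_b,\cdot\rangle=\langle Q_b\bv_0-\bv_b,\cdot\rangle$ fails on curved cells --- where $Q_b$ projects onto $[P_k(e)]^d$ and the stabilizer uses $\bv_0-\bv_b$ directly --- accurately pinpoints the reason for the restriction to $\mathcal{T}_h^S$.
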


\begin{lemma}\label{H1 estimates}
	Suppose that $\bu_i \in[H^{k+1}(\Omega_i)]^2$ and $p_i \in H^k(\Omega_i)$ with $i=1,2$, we have
	\begin{eqnarray}
		|\ell_1(\bu,\bv)|  & \leqslant & C h^k(\|\bu_1\|_{k+1, \Omega_1}+\| \bu_2 \|_{k+1, \Omega_2}) \3bar \bv \3bar,\label{H1 estimates 1}\\
		|\ell_2(p,\bv)| &\leqslant & C h^k(\| p_1 \|_{k, \Omega_1}+\| p_2 \|_{k, \Omega_2}) \3bar \bv \3bar,\label{H1 estimates 2}\\
		|\ell_3(\bu,\bv)| &\leqslant & C h^k(\|\bu_1\|_{k+1, \Omega_1}+\| \bu_2 \|_{k+1, \Omega_2}) \3bar \bv \3bar,\label{H1 estimates 3}\\
		|\ell_4(\bu,q)| &\leqslant & C h^k(\|\bu_1\|_{k+1, \Omega_1}+\| \bu_2 \|_{k+1, \Omega_2}) \| q\|, \label{H1 estimates 4}\\
		|s(Q_h \bu ,\bv)| &\leqslant & C h^k(\|\bu_1\|_{k+1, \Omega_1}+\| \bu_2 \|_{k+1, \Omega_2}) \3bar \bv \3bar,\label{H1 estimates 5}
	\end{eqnarray}
	for all $\bv \in  V_h^0$ and $q \in W_h$.
\end{lemma}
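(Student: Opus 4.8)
First I would note that all five bounds admit a common template: write each functional as a sum of boundary pairings over the cells, apply Cauchy--Schwarz on each edge so as to separate a factor built from the discrete test function from a factor measuring a projection residual of the exact data, control the former by $\3bar\bv\3bar$ (or by $\|q\|$) and the latter by $Ch^{k}$ using the trace inequality (\ref{trace inequality}) together with the projection estimates (\ref{projectorestimate1})--(\ref{projectorestimate3}) with $r=k$.

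I would dispose of (\ref{H1 estimates 1}) and (\ref{H1 estimates 2}) together, since both integrands pair $\bv_0-\bv_b$ against a projection residual, namely $A(\nabla\bu-\mathbb{Q}_h(\nabla\bu))\cdot\bn$ for $\ell_1$ and $(p-\mathcal{Q}_h p)\bn$ for $\ell_2$. On the interface part $\partial T\cap\Gamma$ the stabilizer $s(\bv,\bv)$ controls $\|\bv_0-\bv_b\|$ directly, so Cauchy--Schwarz, the trace inequality (\ref{trace inequality}) on the residual, and (\ref{projectorestimate2}) (resp. (\ref{projectorestimate3})) close the estimate. On the remaining edges I would split $\bv_0-\bv_b=(\bv_0-Q_b\bv_0)+(Q_b\bv_0-\bv_b)$: the second piece is exactly the straight-edge stabilizer, controlled by $s(\bv,\bv)^{1/2}\le\3bar\bv\3bar$, while for the first piece the $L^2$-orthogonality of $Q_b$ lets me replace the residual by its own $Q_b$-complement and bound $\|\bv_0-Q_b\bv_0\|_e$ by $Ch_T^{1/2}\|\nabla\bv_0\|_T$ via a Poincar\'e estimate and (\ref{trace inequality}), which is in turn controlled through (\ref{estimate 1}). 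Summing over cells with the finite-overlap property (A4) yields the factors $h^{k}(\|\bu_1\|_{k+1,\Omega_1}+\|\bu_2\|_{k+1,\Omega_2})$ and $h^{k}(\|p_1\|_{k,\Omega_1}+\|p_2\|_{k,\Omega_2})$.

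For (\ref{H1 estimates 3}) and (\ref{H1 estimates 4}) the pairing lives only on $\Gamma$ and carries the edge-projection residual $Q_b\bu_i-\bu_i$. I would apply Cauchy--Schwarz on each interface edge and use the approximation property of the edge projection $Q_b$ (onto $[P_k(e)]^d$ on curved edges) to bound $\|Q_b\bu_i-\bu_i\|_e\le Ch_T^{k+1/2}\|\bu_i\|_{k+1}$. The companion factors $A_i\nabla_w\bv_i\cdot\bn_i$ and $q_i\bn_i$ are polynomials, so the inverse and trace inequalities give $\|\nabla_w\bv\|_e\le Ch_T^{-1/2}\|\nabla_w\bv\|_T$ and $\|q\|_e\le Ch_T^{-1/2}\|q\|_T$; combined with $\sum_T\|\nabla_w\bv\|_T^2\le C\3bar\bv\3bar^2$ from Lemma \ref{Abounded} and the fixed number of interface edges this produces $Ch^{k}(\|\bu_1\|_{k+1,\Omega_1}+\|\bu_2\|_{k+1,\Omega_2})\3bar\bv\3bar$ and the analogous bound with $\|q\|$. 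Finally, for (\ref{H1 estimates 5}) I would use $|s(Q_h\bu,\bv)|\le s(Q_h\bu,Q_h\bu)^{1/2}s(\bv,\bv)^{1/2}\le s(Q_h\bu,Q_h\bu)^{1/2}\3bar\bv\3bar$ and estimate $s(Q_h\bu,Q_h\bu)$ termwise: on $\partial T\setminus\Gamma$ the boundedness of $Q_b$ gives $\|Q_b(Q_0\bu-\bu)\|_e\le\|Q_0\bu-\bu\|_e$, and on $\partial T\cap\Gamma$ I split $Q_0\bu-Q_b\bu=(Q_0\bu-\bu)+(\bu-Q_b\bu)$, handling the volume residual by (\ref{trace inequality}) with (\ref{projectorestimate1}) and the edge residual by the approximation of $Q_b$, which yields $s(Q_h\bu,Q_h\bu)\le Ch^{2k}(\|\bu_1\|_{k+1,\Omega_1}^2+\|\bu_2\|_{k+1,\Omega_2}^2)$.

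I expect the genuine difficulty to lie entirely on the curved cells. One must verify that the trace and inverse inequalities transfer through the non-affine map without degrading the $h$-powers, that the analogue of (\ref{estimate 1}), namely $\|\nabla\bv_0\|_T\le C\3bar\bv\3bar$, remains valid there, and---most delicately---that the edge projection $Q_b$ onto $[P_k(e)]^d$ retains full $O(h^{k+1/2})$ approximation accuracy on a piecewise $C^2$ curved interface edge. The last point is subtle because the trace of a cell polynomial on a curved edge is no longer a polynomial in the arc-length parameter, so the orthogonality argument used for $\ell_1$, $\ell_2$ and the approximation estimate used for $\ell_3$, $\ell_4$ and $s(Q_h\bu,Q_h\bu)$ must be carried out with the non-affine parametrization kept explicit.
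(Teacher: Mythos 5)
Your proposal is correct and follows essentially the same route as the paper: the same edge-wise Cauchy--Schwarz template, the same split $\bv_0-\bv_b=(\bv_0-Q_b\bv_0)+(Q_b\bv_0-\bv_b)$ away from the interface with the $\partial T\cap\Gamma$ part absorbed directly by the stabilizer, and the same closing via the trace inequality (\ref{trace inequality}), the projection estimates (\ref{projectorestimate1})--(\ref{projectorestimate3}) with $r=k$, and (\ref{estimate 1}), yielding the identical $h^{1/2}\times h^{k-1/2}$ bookkeeping. Your minor variants---factoring $|s(Q_h\bu,\bv)|\leqslant s(Q_h\bu,Q_h\bu)^{1/2}s(\bv,\bv)^{1/2}$ rather than writing the sums out, and invoking the $Q_b$ edge-approximation property directly for $\ell_3,\ell_4$ where the paper passes through $\|Q_b\bu_i-\bu_i\|_e\leqslant\|Q_0\bu_i-\bu_i\|_e$---are cosmetic, and your closing caveat about traces of polynomials on curved edges is a genuine subtlety the paper itself glosses over.
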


\begin{proof}
	As to the estimate (\ref{H1 estimates 1}), according to the Cauchy-Schwarz inequality, we have
	\begin{eqnarray}\label{HEP1}
		\begin{split}
			|\ell_1(\bu,\bv)|=&\left|
			\sum_{T \in \mathcal{T}_h} \langle \bv_0 -\bv_b, A (\nabla \bu)\cdot \bn -A \mathbb{Q}_h(\nabla \bu)\cdot \bn \rangle_{\partial T}
			\right|\\
			\leqslant& A \sum_{T \in \mathcal{T}_h} \| \bv_0 -\bv_b  \|_{\partial T} \| \nabla \bu -\mathbb{Q}_h(\nabla \bu) \|_{\partial T}\\
			\leqslant& A \left( \sum_{T \in \mathcal{T}_h}  \| \bv_0 -\bv_b  \|^2_{\partial T} \right)^{\frac{1}{2}} \left( \sum_{T \in \mathcal{T}_h} \| \nabla \bu - \mathbb{Q}_h(\nabla \bu) \|^2_{\partial T}  \right)^{\frac{1}{2}}.
		\end{split}
	\end{eqnarray}
	It follows from the trace inequality and the estimate (\ref{projectorestimate2}) that
	\begin{eqnarray}\label{HEP2}
		\begin{split}
			&  \sum_{T \in \mathcal{T}_h} \| \nabla \bu - \mathbb{Q}_h(\nabla \bu) \|^2_{\partial T}\\
			\leqslant&  \sum_{T \in \mathcal{T}_h} C \left( h_T^{-1} \| \nabla \bu - \mathbb{Q}_h(\nabla \bu) \|^2_T
			+ h_T \| \nabla(\nabla \bu - \mathbb{Q}_h(\nabla \bu))\|^2_T \right)\\
			\leqslant&  C h^{2k-1} (\| \bu_1 \|^2_{k+1,\Omega_1} +\| \bu_2\|^2_{k+1,\Omega_2} ).
		\end{split}
	\end{eqnarray}
	Using the triangle inequality, the trace inequality, the Cauchy-Schwarz inequality and the estimate (\ref{projectorestimate1}), as well as the estimate (\ref{estimate 1}), we obtain
	\begin{eqnarray}\label{HEP3}
		\begin{split}
			& \sum_{T \in \mathcal{T}_h} \| \bv_0 -\bv_b  \|^2_{\partial T}\\
			\leqslant&  \sum_{T \in \mathcal{T}_h^S} 2 \left( \|\bv_0 - Q_b\bv_0 \|^2_{\partial T}+ \|Q_b\bv_0-\bv_b \|^2_{\partial T} \right) + \sum_{T \in \mathcal{T}_h^I} \| \bv_0 -\bv_b  \|^2_{\partial T}\\
			\leqslant&  C\left(\sum_{T \in \mathcal{T}_h^S} h_T \| \nabla \bv_0 \|_T^2 \right)
			+C h \left(\sum_{T \in \mathcal{T}_h^S} h_T^{-1}\|Q_b\bv_0-\bv_b \|^2_{\partial T} \right) + h \3bar \bv \3bar^2  \\
			\leqslant&  C h \3bar \bv \3bar^2.
		\end{split}
	\end{eqnarray}
	By the estimates (\ref{HEP2})-(\ref{HEP3}), the proof of estimate (\ref{H1 estimates 1}) is complete.
	
	For the estimate (\ref{H1 estimates 2}), the same techniques of proving the estimate (\ref{H1 estimates 1}) can be applied to derive
	\begin{eqnarray}\label{HEP4}
		\begin{split}
			|\ell_2(p,\bv)|=&\left|
			\sum_{T \in \mathcal{T}_h} \langle \bv_0 -\bv_b,p\bn-(\mathcal{Q}_h p )\bn \rangle_{\partial T}
			\right|\\
			\leqslant & \sum_{T \in \mathcal{T}_h} \| \bv_0 -\bv_b\|_{\partial T} \| p- \mathcal{Q}_h p\|_{\partial T}\\
			\leqslant & \left( \sum_{T \in \mathcal{T}_h} h_T^{-1} \|\bv_0 -\bv_b \|_{\partial T}^2
			\right)^{\frac{1}{2}} \left(\sum_{T \in \mathcal{T}_h} h_T \|p -\mathcal{Q}_h p\|_{\partial T}^2
			\right)^{\frac{1}{2}}\\
			\leqslant & C h^k (\|p_1\|_{k,\Omega_1}+\|p_2\|_{k,\Omega_2})\3bar \bv \3bar.
		\end{split}
	\end{eqnarray}
	Similarly, we obtain
	\begin{eqnarray}\label{HEP5}
		\begin{split}
			|s(Q_h \bu,\bv)|=&\Big|
			\sum_{T \in \mathcal{T}_h}A h_T^{-1} \langle
			Q_b(Q_0 \bu) -Q_b \bu,Q_b \bv_0 -\bv_b \rangle_{\partial T \setminus (\partial T \cap \Gamma)} \\
			&+A h_T^{-1} \langle Q_0 \bu  - Q_b \bu, \bv_0 -\bv_b \rangle_{\partial T \cap \Gamma}
			\Big|\\
			\leqslant & \sum_{T \in \mathcal{T}_h} h_T^{-1} \Big(\|Q_0 \bu -\bu \|_{\partial T \setminus (\partial T \cap \Gamma)} \| Q_b \bv_0 -\bv_b\|_{\partial T \setminus (\partial T \cap \Gamma)} \\
			&+h_T^{-1} \|Q_0 \bu  - Q_b \bu \|_{\partial T \cap \Gamma} \|\bv_0 -\bv_b \|_{\partial T \cap \Gamma}  \Big)\\
			\leqslant & Ch^k (\| \bu_1 \|_{k+1,\Omega_1} +\| \bu_2\|_{k+1,\Omega_2}) \3bar \bv \3bar.
		\end{split}
	\end{eqnarray}
	For $\ell_3(\bu,\bv)$, by the definition of $\3bar \cdot \3bar$, we have
	\begin{eqnarray}
		\begin{split}
			|\ell_3(\bu,\bv)|=&\left|
			\sum_{i=1}^2 \sum_{e \in \Gamma} \langle 
			Q_b \bu_i -\bu_i , A_i \nabla_w \bv_i \cdot \bn_i \rangle_e \right|\\
			\leqslant & C\sum_{i=1}^2 \sum_{e \in \Gamma} \|Q_b \bu_i -\bu_i \|_{e} \|A_i^{\frac{1}{2}}\nabla_w \bv_i \|_{e}\\
			\leqslant & \sum_{i=1}^2 \left( \sum_{e \in \Gamma} \| Q_b \bu_i -\bu_i\|_e^2 \right)^{\frac{1}{2}} \left( \sum_{e \in \Gamma} \|A_i^{\frac{1}{2}} \nabla_w \bv_i \|_e^2 \right)^{\frac{1}{2}}\\
			\leqslant & Ch^k (\| \bu_1 \|_{k+1,\Omega_1} +\| \bu_2\|_{k+1,\Omega_2}) \3bar \bv \3bar.
		\end{split}
	\end{eqnarray}
	Similarly, we get
	\begin{eqnarray}
		\begin{split}
			|\ell_4(\bu,q)|=&\left|
			\sum_{i=1}^2 \sum_{e \in \Gamma} \langle 
			Q_b \bu_i -\bu_i , q_i \bn_i \rangle_e
			\right|\\
			\leqslant & C\sum_{i=1}^2 \sum_{e \in \Gamma} \|Q_b \bu_i -\bu_i \|_{e} \|q\|_{e}\\
			\leqslant & \sum_{i=1}^2 \left( \sum_{e \in \Gamma} \| Q_b \bu_i -\bu_i\|_e^2 \right)^{\frac{1}{2}} \left( \sum_{e \in \Gamma} \| q \|_e^2 \right)^{\frac{1}{2}}\\
			\leqslant & Ch^k (\| \bu_1\|_{k+1} +\| \bu_2\|_{k+1}) \|q\|.
		\end{split}
	\end{eqnarray}
	The proof of the lemma is complete.
\end{proof}

Based on error equations (\ref{error equation 1})-(\ref{error equation 2}) and estimates (\ref{H1 estimates 1})-(\ref{H1 estimates 5}), we give the following error estimate.
\begin{theorem}\label{H1ERROR}
	Assuming $(\bu_i ,p_i) \in [H^{k+1}(\Omega_i)]^2 \times H^k (\Omega_i)$ with $i=1,2$ are the exact solutions of the Eqs.(\ref{model 1})-(\ref{interface 2}) and $(\bu_h,p_h) \in V_h \times W_h$ are numerical solutions obtained from the WG scheme, then we have
	\begin{eqnarray}\label{H1error}
		\begin{split}
			\3bar Q_h \bu - \bu_h \3bar +\|\mathcal{Q}_h p -p_h\| 
			\leqslant  C h^k (\|\bu_1\|_{k+1,\Omega_1}+\|\bu_2\|_{k+1,\Omega_2}+\|p_1\|_{k,\Omega_1} +\|p_2\|_{k,\Omega_2}).
		\end{split}
	\end{eqnarray}
\end{theorem}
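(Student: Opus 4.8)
The plan is to treat (\ref{H1error}) as a coupled estimate for the velocity error $\be_h=Q_h\bu-\bu_h$ in the energy norm and the pressure error $\varepsilon_h=\mathcal{Q}_h p-p_h$ in $L^2$, built entirely from the already-established tools: the error equations of Lemma \ref{error equation}, the semi-norm definition $\3bar\bv\3bar^2=a_s(\bv,\bv)$, the consistency bounds of Lemma \ref{H1 estimates}, and the inf-sup condition of Lemma \ref{InfSupCondition}. Throughout I abbreviate $\Lambda=\|\bu_1\|_{k+1,\Omega_1}+\|\bu_2\|_{k+1,\Omega_2}+\|p_1\|_{k,\Omega_1}+\|p_2\|_{k,\Omega_2}$. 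First I would produce an energy identity by taking $\bv=\be_h\in V_h^0$ in (\ref{error equation 1}) and $q=\varepsilon_h\in W_h$ in (\ref{error equation 2}). Since $a_s(\be_h,\be_h)=\3bar\be_h\3bar^2$ and the second error equation gives $b(\be_h,\varepsilon_h)=-\ell_4(\bu,\varepsilon_h)$, the pressure coupling does \emph{not} vanish as in the classical divergence-free WG analysis, but is carried by the interface term $\ell_4$. Applying the five bounds (\ref{H1 estimates 1})--(\ref{H1 estimates 5}) then yields an inequality of the form
\begin{eqnarray*}
\3bar\be_h\3bar^2 &\leqslant& C h^k \Lambda \left( \|\varepsilon_h\| + \3bar\be_h\3bar \right),
\end{eqnarray*}
which shows that the two errors are genuinely coupled and that a separate control of $\|\varepsilon_h\|$ is required before the velocity estimate can be closed.

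Next I would bound the pressure error through the inf-sup condition. Solving (\ref{error equation 1}) for $b(\bv,\varepsilon_h)$ and using the Cauchy--Schwarz inequality for the symmetric positive form $a_s$, namely $|a_s(\be_h,\bv)|\leqslant\3bar\be_h\3bar\,\3bar\bv\3bar$, together with Lemma \ref{H1 estimates}, I obtain $b(\bv,\varepsilon_h)/\3bar\bv\3bar\leqslant C h^k\Lambda+\3bar\be_h\3bar$ for every $\bv\in V_h^0$. Taking the supremum over $\bv$ and invoking Lemma \ref{InfSupCondition} gives
\begin{eqnarray*}
C_1\|\varepsilon_h\| - C_2 h|\varepsilon_h|_1 &\leqslant& C h^k\Lambda + \3bar\be_h\3bar.
\end{eqnarray*}

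The main obstacle is the term $C_2 h|\varepsilon_h|_1$, which is absent from the straight-interface theory and enters here precisely from the interface residual $\rho\bn-Q_b(\rho\bn)$ in the proof of Lemma \ref{InfSupCondition}. To dispose of it I would apply the inverse inequality---now available on curved cells---to the piecewise polynomial $\varepsilon_h\in W_h$, which gives $h|\varepsilon_h|_1\leqslant C\|\varepsilon_h\|$ under the mesh regularity (quasi-uniformity) assumptions of Section~2, and then absorb this contribution into the left-hand side. This absorption is the delicate point: it is legitimate provided the inf-sup constant $C_1$ dominates $C_2$ times the inverse-inequality constant, in which case one arrives at $\|\varepsilon_h\|\leqslant C h^k\Lambda+C\3bar\be_h\3bar$.

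Finally I would substitute this pressure bound back into the energy inequality of the first paragraph and invoke Young's inequality to absorb the $\3bar\be_h\3bar$ contributions on the right-hand side, obtaining $\3bar\be_h\3bar\leqslant C h^k\Lambda$. Re-inserting this into the pressure estimate then yields $\|\varepsilon_h\|\leqslant C h^k\Lambda$, and adding the two gives exactly (\ref{H1error}). The only genuinely new difficulties relative to the classical WG Stokes estimate are the coupling of the velocity and pressure errors through $\ell_4$ on the curved interface and the treatment of the $h|\varepsilon_h|_1$ term via the inverse inequality; the remaining manipulations are a routine combination of Lemmas \ref{Abounded}, \ref{H1 estimates} and \ref{InfSupCondition}.
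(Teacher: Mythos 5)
Your overall architecture---energy identity from $\bv=\be_h$, $q=\varepsilon_h$ with the interface coupling carried by $\ell_4$, pressure control via Lemma \ref{InfSupCondition}, and closure by Young's inequality---coincides with the paper's, and your energy inequality $\3bar\be_h\3bar^2\leqslant Ch^k\Lambda(\3bar\be_h\3bar+\|\varepsilon_h\|)$ is exactly the paper's intermediate bound. The genuine gap is your treatment of the defect term $C_2h|\varepsilon_h|_1$ in the inf-sup condition. You propose the inverse inequality $h|\varepsilon_h|_1\leqslant C_{\mathrm{inv}}\|\varepsilon_h\|$ followed by absorption into $C_1\|\varepsilon_h\|$, and you correctly flag that this requires $C_1>C_2C_{\mathrm{inv}}$; but that relation among three fixed, unquantified constants cannot be verified and is not implied by anything in the paper. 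Crucially, both sides of the absorption scale identically in $h$ (the constant $C_{\mathrm{inv}}$ is $h$-independent), so mesh refinement cannot rescue the step---there is no small parameter. A secondary issue: the inverse inequality is elementwise ($h_T\|\nabla\varepsilon_h\|_T\leqslant C\|\varepsilon_h\|_T$), so passing to the global mesh size $h$ additionally needs quasi-uniformity, which the regularity assumptions (A1)--(A4) do not assert. As written, the proof does not close.

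The paper disposes of $h|\varepsilon_h|_1$ unconditionally, and differently: it bounds $|\varepsilon_h|_1$ by the \emph{data} rather than by $\|\varepsilon_h\|$, using the error equation with the special discrete test function $\bv=\{\nabla\varepsilon_h,0\}\in V_h^0$. A direct computation with the weak divergence gives $b(\bv,\varepsilon_h)=|\varepsilon_h|_1^2$, while inverse and trace inequalities applied to this $\bv$ (whose boundary component vanishes) give $\3bar\bv\3bar\leqslant Ch^{-1}|\varepsilon_h|_1$. Combining these with the bound $b(\bv,\varepsilon_h)\leqslant C\3bar\be_h\3bar\,\3bar\bv\3bar+Ch^k\Lambda\,\3bar\bv\3bar$---which you also derived---yields $|\varepsilon_h|_1\leqslant Ch^{-1}\3bar\be_h\3bar+Ch^{k-1}\Lambda$, hence $h|\varepsilon_h|_1\leqslant C\3bar\be_h\3bar+Ch^k\Lambda$. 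Feeding this into Lemma \ref{InfSupCondition} gives $C_1\|\varepsilon_h\|\leqslant C\3bar\be_h\3bar+Ch^k\Lambda$ with no hypothesis on the constants and no quasi-uniformity. From there your final steps (substituting the pressure bound into the energy inequality and absorbing via Young's inequality) are exactly the paper's and are correct; replacing your absorption paragraph with this test-function argument repairs the proof.
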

\begin{proof}
	Choosing $\bv= \be_h$ in Eq.(\ref{error equation 1}) and $q = \varepsilon_h$ in Eq.(\ref{error equation 2}) and adding the two equations, then we have
	\begin{eqnarray}\label{H1errorP1}
		a_s(\be_h,\be_h)=\ell_1(\bu,\be_h)-\ell_2(p,\be_h)+\ell_3(\bu,\be_h)+s(Q_h \bu, \be_h)+\ell_4(\bu,\varepsilon_h).
	\end{eqnarray}
	To simplify, let $\delta$ represent $\|\bu_1\|_{k+1,\Omega_1}+\|\bu_2\|_{k+1,\Omega_2}+\|p_1\|_{k,\Omega_1} +\|p_2\|_{k,\Omega_2}$. According to Eqs.(\ref{H1 estimates 1})-(\ref{H1 estimates 5}), we derive
	\begin{eqnarray}\label{H1errorP2}
		\begin{split}
			\3bar \be_h \3bar^2 \leqslant& C h^k \delta \3bar \be_h \3bar +C h^k(\|\bu_1\|_{k+1,\Omega_1}+\|\bu_2\|_{k+1,\Omega_2}) \|\varepsilon_h \|.
		\end{split}
	\end{eqnarray}
	According to Eq.(\ref{error equation 1}) and the boundedness of $a_s(\cdot, \cdot)$, we have
	\begin{eqnarray}\label{H1errorP3}
		\begin{split}
			b(\bv,\varepsilon_h)=&\ell_1(\bu,\bv)-\ell_2(p,\bv)-\ell_3(\bu,\bv)+s(Q_h \bu, \bv)-a_s(\be_h,\bv)\\
			\leqslant & C \3bar \be_h \3bar \3bar \bv \3bar 
			+Ch^k \delta \3bar \bv \3bar.
		\end{split}
	\end{eqnarray}
	In particular, we take $\bv =\{ \nabla \varepsilon_h,\mathbf{0}\} \in V_h^0$ to obtain 
	\begin{eqnarray}\label{H1errorP4}
		\begin{split}
			b(\bv,\varepsilon_h) =&\sum_{T \in \mathcal{T}_h} -(\nabla_w \cdot \bv,\varepsilon_h)_T \\
			=&\sum_{T \in \mathcal{T}_h} \left( (\bv_0 , \nabla \varepsilon_h )_T - \langle \bv_b, \varepsilon_h \bn \rangle_{\partial T}
			\right) \\
			=& \|\nabla \varepsilon_h\|^2 = |\varepsilon_h|_1^2.
		\end{split}
	\end{eqnarray}
	Moreover, for $\3bar \bv \3bar$, it follows from the Cauchy-Schwarz inequality, the trace inequality and the inverse inequality that
	\begin{eqnarray}\label{H1errorP5}
		\begin{split}
			\|\nabla_w \bv\|_T^2=&-(\bv_0,\nabla \cdot (\nabla_w \bv))_T +\langle \bv_b ,\nabla_w \bv \cdot \bn \rangle_{\partial T}\\
			=&(\nabla \bv_0 ,\nabla_w \bv)_T -\langle \bv_0 ,\nabla_w \bv  \cdot \bn \rangle_{\partial T }\\
			=&-(\bv_0,\nabla \cdot (\nabla \bv_0))_T -\langle \bv_0 ,\nabla_w \bv  \cdot \bn \rangle_{\partial T }\\
			=& (\nabla \bv_0 ,\nabla \bv_0)_T -\langle \bv_0, (\nabla_w \bv + \nabla \bv_0)\cdot \bn \rangle_{\partial T}\\
			\leqslant & \| \nabla \bv_0 \|_T^2+\|\bv_0\|_{\partial T } \|\nabla_w \bv + \nabla \bv_0 \|_{\partial T }\\
			\leqslant & Ch^{-2} \| \bv_0 \|_T^2 +Ch^{-1} \| \bv_0\|_T \| \nabla_w \bv \|_T\\
			\leqslant & Ch^{-2} \|  \bv_0 \|_T^2 + \frac{1}{2} \| \nabla_w \bv \|_T^2,
		\end{split}
	\end{eqnarray}
	i.e.
	\begin{eqnarray}\label{H1errorP6}
		\sum_{T \in \mathcal{T}_h} \| \nabla_w \bv\|_T^2 \leqslant Ch^{-2} \sum_{T \in \mathcal{T}_h} \|\bv_0 \|_T^2 \leqslant Ch^{-2}|\varepsilon_h|^2_1.
	\end{eqnarray}
	For the stabilizer term, we obtain
	\begin{eqnarray}\label{H1errorP7}
		\begin{split}
			& \sum_{T \in \mathcal{T}_h}A h_T^{-1} \|Q_b \bv_0 -\bv_b\|^2_{\partial T \setminus (\partial T \cap \Gamma)} +A h_T^{-1} \|\bv_0 -\bv_b\|^2_{\partial T \cap \Gamma} \\
			\leqslant & \sum_{T \in \mathcal{T}_h} Ch_T^{-1} \| \bv_0\|_{\partial T} \leqslant \sum_{T \in \mathcal{T}_h} Ch_T^{-2} \| \bv_0\|_T \leqslant Ch^{-2} |\varepsilon_h|_1^2.
		\end{split}
	\end{eqnarray}
	Adding (\ref{H1errorP6}) to (\ref{H1errorP7}), we get
	\begin{eqnarray}\label{H1errorP8}
		\3bar \bv \3bar \leqslant Ch^{-1} |\varepsilon_h|_1.
	\end{eqnarray}
	Substituting Eq.(\ref{H1errorP4}) and Eq.(\ref{H1errorP8}) into Eq.(\ref{H1errorP3}), we have
	\begin{eqnarray}\label{H1errorP9}
		|\varepsilon_h|_1 \leqslant Ch^{-1} \3bar \be_h \3bar +Ch^{k-1} \delta.
	\end{eqnarray}
	Next, according to the inf-sup condition (\ref{InfSup1}), we derive
	\begin{eqnarray}\label{H1errorP10}
		C_1 \|\varepsilon_h\| \leqslant C \3bar \be_h \3bar + C h^k \delta.
	\end{eqnarray}
	Substituting Eq.(\ref{H1errorP10}) into Eq.(\ref{H1errorP2}) gives rise to  
	\begin{eqnarray}\label{H1errorP11}
		\begin{split}
			\3bar \be_h \3bar^2
			&\leqslant Ch^k \delta  \3bar \be_h \3bar +Ch^k\left(\|\bu_1\|_{k+1,\Omega_1}+\|\bu_2\|_{k+1,\Omega_2}\right) \left( C\3bar \be_h \3bar +Ch^k  \delta \right)\\
			&\leqslant  C h^k \delta \3bar \be_h \3bar +C h^{2k} \delta^2\\
			&\leqslant C h^{2k} \delta^2 +\frac{1}{2} \3bar \be_h \3bar^2,
		\end{split}
	\end{eqnarray}
	therefore we have
	\begin{eqnarray}
		\3bar \be_h \3bar \leqslant C h^k \delta , \quad
		\| \varepsilon_h \| \leqslant C h^k \delta.
	\end{eqnarray}
	Therefore, the estimate (\ref{H1error}) holds true.
\end{proof}
\section{ Error Estimate in the $L^2$ Norm} In this section, we use the duality argument to derive an error estimate for $\be_0=Q_0 \bu - \bu_0$ in the $L^2$ norm. Now we consider the following problem: seeking $(\bw, \theta)$ to satisfy
\begin{eqnarray}
	- \nabla \cdot (A \nabla \bm{\omega})+\nabla \theta &=& \be_0 ~~~~ in ~\Omega, \label{dual problem 1} \\
	\nabla \cdot \bm{\omega} &=&0 ~~~~~ in ~ \Omega, \label{dual problem 2} \\
	\bm{\omega}&=& 0 ~~~~~ on ~\partial \Omega, \label{dual problem 3}
\end{eqnarray}
Assume the solution $(\bm{\omega}, \theta)$ of dual problem (\ref{dual problem 1})-(\ref{dual problem 3}) has $[H^2(\Omega)]^2 \times [H^1(\Omega)]$-regularity estimate, i.e.
\begin{eqnarray}
	\| \bm{\omega} \|_2 +\| \theta \|_1 \leqslant C \|\be_0 \|.
\end{eqnarray}
\begin{theorem}
	Based on the assumptions in Theorem \ref{H1ERROR}, we obtain the following error estimate:
	\begin{eqnarray}
		\| Q_0 \bu -\bu_0\| \leqslant C h^{k+1}(\| \bu_1 \|_{k+1,\Omega_1}+\| \bu_2\|_{k+1,\Omega_2}+\|p_1\|_{k,\Omega_1}+\|p_2\|_{k,\Omega_2}).
	\end{eqnarray}
\end{theorem}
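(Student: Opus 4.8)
The plan is to use the standard duality (Aubin–Nitsche) argument, testing the dual problem (\ref{dual problem 1})–(\ref{dual problem 3}) against the velocity error $\be_h=Q_h\bu-\bu_h$ and its components. The key identity I would establish first is obtained by projecting the dual solution into the WG spaces: set $\tilde\bw=Q_h\bm\omega\in V_h^0$ and $\tilde\theta=\mathcal{Q}_h\theta\in W_h$. Testing (\ref{dual problem 1}) with $\be_0$ gives $\|\be_0\|^2=(-\nabla\cdot(A\nabla\bm\omega)+\nabla\theta,\be_0)$. I would then apply the error-equation machinery of Lemma \ref{EE} and Lemma \ref{error equation} \emph{with the roles reversed}: since $(\bm\omega,\theta)$ solves a Stokes problem with right-hand side $\be_0$ and homogeneous interface data, Lemma \ref{EE} applied to $(\bm\omega,\theta)$ and tested against $\be_h$ yields
\begin{eqnarray*}
a(Q_h\bm\omega,\be_h)+b(\be_h,\mathcal{Q}_h\theta)=(\be_0,\be_{h,0})+\ell_1(\bm\omega,\be_h)-\ell_2(\theta,\be_h)+\ell_3(\bm\omega,\be_h).
\end{eqnarray*}
This is the bridge that converts $\|\be_0\|^2=(\be_0,\be_{h,0})$ into a combination of bilinear forms in $\be_h$ plus the $\ell_i$ remainder terms, which I can subsequently control.

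Next I would symmetrize. Using the error equations (\ref{error equation 1})–(\ref{error equation 2}) with test functions $\bv=\tilde\bw=Q_h\bm\omega$ and $q=\tilde\theta=\mathcal{Q}_h\theta$, I obtain a second expression for $a_s(\be_h,Q_h\bm\omega)+b(Q_h\bm\omega,\varepsilon_h)$ and for $b(\be_h,\mathcal{Q}_h\theta)$ in terms of the functionals $\ell_1(\bu,\cdot),\ell_2(p,\cdot),\ell_3(\bu,\cdot),\ell_4(\bu,\cdot)$ and $s(Q_h\bu,\cdot)$ evaluated at the projected dual solution. Subtracting the two representations, the leading $a$-bilinear terms cancel by the symmetry $a(Q_h\bm\omega,\be_h)=a(\be_h,Q_h\bm\omega)$ and the divergence constraints $b(\be_h,\tilde\theta)$, $b(\tilde\bw,\varepsilon_h)$ pair up against $\ell_4$, leaving
\begin{eqnarray*}
\|\be_0\|^2=\sum_i\big[\ell_i(\bu,\tilde\bw)\text{-type terms}\big]-\sum_i\big[\ell_i(\bm\omega,\be_h)\text{-type terms}\big]-s(Q_h\bu,\tilde\bw)+s(\be_h,\tilde\bw),
\end{eqnarray*}
i.e. a finite sum of cross terms each of which is a boundary pairing of a projection error against a weak-operator or projection quantity.

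The estimation of each cross term is where the extra power of $h$ must be harvested, and this is \textbf{the main obstacle}. For the terms involving the dual solution $\bm\omega,\theta$ I can only use $H^2\times H^1$ regularity (so $r=1$ in the projection estimate (\ref{projectorestimate1})–(\ref{projectorestimate3})), which contributes one factor of $h$; the remaining $h^k$ must come from the primal data $\bu,p$. The crucial point is that each functional $\ell_1,\ell_2,\ell_3$ is a pairing of the form $\langle \bv_0-\bv_b,\cdot\rangle$ or $\langle Q_b\bu-\bu,\cdot\rangle$, and after inserting the projection $Q_b$ and exploiting its orthogonality one gains an additional half-power on each side via the trace inequality (\ref{trace inequality}); combining the $h^{1/2}$ gains from both the primal and dual factors upgrades the energy-norm rate $h^k$ to $h^{k+1}$. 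Concretely, I would bound $|\ell_1(\bm\omega,\be_h)|\le Ch\|\bm\omega\|_2\3bar\be_h\3bar$ using $r=1$, then substitute $\3bar\be_h\3bar\le Ch^k\delta$ from Theorem \ref{H1ERROR}; the analogous bounds for the $\ell_2,\ell_3,\ell_4,s$ terms follow the same pattern, with the interface terms $\ell_3,\ell_4$ requiring a careful but routine use of the trace inequality restricted to $\partial T\cap\Gamma$.

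Finally, I would collect all contributions, each of which is $\le Ch^{k+1}\|\be_0\|\,\delta$ after invoking the regularity bound $\|\bm\omega\|_2+\|\theta\|_1\le C\|\be_0\|$, divide through by $\|\be_0\|$, and conclude $\|\be_0\|=\|Q_0\bu-\bu_0\|\le Ch^{k+1}\delta$ with $\delta=\|\bu_1\|_{k+1,\Omega_1}+\|\bu_2\|_{k+1,\Omega_2}+\|p_1\|_{k,\Omega_1}+\|p_2\|_{k,\Omega_2}$. The delicate bookkeeping lies in tracking which terms are genuinely $O(h^{k+1})$ versus those that are only $O(h^k)$ and must be absorbed or shown to cancel against the symmetric dual contribution; verifying that cancellation rigorously is the part of the argument I would check most carefully.
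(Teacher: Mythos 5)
Your proposal follows essentially the same route as the paper's proof: the identical duality argument, applying Lemma \ref{EE} to the dual solution $(\bm{\omega},\theta)$ tested against $\be_h$, combining with the error equations (\ref{error equation 1})--(\ref{error equation 2}) at $\bv=Q_h\bm{\omega}$ and $q=\mathcal{Q}_h\theta$, using the symmetry of $a_s$, and harvesting the extra power of $h$ exactly as the paper does (the $\ell_i(\bm{\omega},\cdot)$ and $s(Q_h\bm{\omega},\cdot)$ terms bounded by $Ch(\|\bm{\omega}\|_2+\|\theta\|_1)(\3bar\be_h\3bar+\|\varepsilon_h\|)$ and closed with Theorem \ref{H1ERROR}, while the primal terms at $Q_h\bm{\omega}$ gain $h^{3/2}$ from $H^2$ regularity of the dual solution against $h^{k-1/2}$ from $\bu$, $p$). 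The only blemishes are bookkeeping slips in your schematic identity --- the stabilizer contributions should read $+s(Q_h\bu,Q_h\bm{\omega})-s(Q_h\bm{\omega},\be_h)$, and the pairing of the divergence terms with $\ell_4$ additionally produces $-\ell_4(\bm{\omega},\varepsilon_h)$ and $-\ell_4(\bu,\mathcal{Q}_h\theta)$ --- which are precisely the details you flagged for careful verification and which do not alter the estimates.
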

\begin{proof}
	Due to $(\bw,\theta)$ satisfies the Eq.(\ref{dual problem 1}) with $\bbf = \be_0=Q_0 \bu -\bu_0$, then choosing $\bv = \be_h $ in Eq.(\ref{EEE}) and $q = \mathcal{Q}_h \theta $ in Eq.(\ref{error equation 2}) leads to
	\begin{align}
		a_s(Q_h \bm{\omega} ,\be_h)+b(\be_h, \mathcal{Q}_h \theta)&=(\be_0,\be_0)+\ell_1(\bm{\omega},\be_h)
		-\ell_2(\theta,\be_h)+\ell_3(\bm{\omega},\be_h)+s(Q_h \bm{\omega},\be_h),\\
		b(\be_h, \mathcal{Q}_h \theta)&=-\ell_4(\bu, \mathcal{Q}_h \theta).
	\end{align}
	According to the definition of the weak divergence operator and Lemma \ref{weak divergence exchange}, we have
	\begin{eqnarray}
		\begin{split}
			b(Q_h\bm{\omega},\varepsilon_h)=&-\sum_{i=1}^2 \sum_{T \in \mathcal{T}_h} (\nabla_w \cdot Q_h\bm{\omega}_i,\varepsilon_h)_T\\
			=&-\sum_{i=1}^2 \sum_{T \in \mathcal{T}_h} (\mathcal{Q}_h(\nabla \cdot \bm{\omega}_i),\varepsilon_h)_T -\sum_{i=1}^2 \sum_{e \in \Gamma} \langle Q_b \bm{\omega}_i -\bm{\omega}_i,\varepsilon_h \bn_i \rangle_e\\
			=&-\ell_4(\bm{\omega},\varepsilon_h).
		\end{split}
	\end{eqnarray}
	Therefore, we get
	\begin{eqnarray}
		\begin{split}
			a_s(Q_h\bm{\omega},\be_h)+b(Q_h\bm{\omega},\varepsilon_h)=&\|\be_0\|^2+\ell_1(\bm{\omega},\be_h)-\ell_2(\theta,\be_h)+\ell_3(\bm{\omega},\be_h)\\
			&+s(Q_h\bm{\omega},\be_h)-\ell_4(\bm{\omega},\varepsilon_h)+\ell_4(\bu,\mathcal{Q}_h \theta).
		\end{split}
	\end{eqnarray}
	From the above equations and Eq.(\ref{error equation 1}), we obtain
	\begin{align*}
		\begin{split}
			\|\be_0\|^2=&\ell_1(\bu,Q_h\bm{\omega})-\ell_2(p,Q_h\bm{\omega})+\ell_3(\bu,Q_h\bm{\omega})+s(Q_h \bu,Q_h\bm{\omega})-\ell_4(\bu,\mathcal{Q}_h\theta)\\
			&-\ell_1(\bm{\omega},\be_h)+\ell_2(\theta,\be_h)-\ell_3(\bm{\omega},\be_h)-s(Q_h\bm{\omega},\be_h)-\ell_4(\bm{\omega},\varepsilon_h).
		\end{split}
	\end{align*}
	According to Lemma \ref{H1 estimates}, we have
	\begin{align*}
		&|-\ell_1(\bm{\omega},\be_h)+\ell_2(\theta,\be_h)-\ell_3(\bm{\omega},\be_h)-s(Q_h\bm{\omega},\be_h-\ell_4(\bm{\omega},\varepsilon_h)|\\
		\leqslant & Ch(\| \bm{\omega}\|_2+\| \theta \|_1)(\3bar \be_h \3bar +\|\varepsilon_h \|) \\
		\leqslant &Ch\3bar \be_h \3bar \|\be_0\|.
	\end{align*}
	Each of the remaining terms is handled as follows.\\
	
	(1) For $\ell_1(\bu,Q_h \bm{\omega})$, we use the Cauchy-Schwarz inequality, the trace inequality and the estimate (\ref{projectorestimate1}) to derive
	\begin{align*}
		&\sum_{T \in \mathcal{T}_h} \langle Q_0 \bm{\omega} -\bm{\omega}, A \nabla \bu \cdot \bn -A \mathbb{Q}_h(\nabla \bu) \cdot \bn \rangle_{\partial T}\\
		\leqslant  & C\left(\sum_{T \in \mathcal{T}_h} \| Q_0 \bm{\omega} -\bm{\omega}\|^2_{\partial T} \right)^{\frac{1}{2}}
		\left(\sum_{T \in \mathcal{T}_h} \| \nabla \bu - \mathbb{Q}_h(\nabla \bu) \|^2_{\partial T} \right)^{\frac{1}{2}}\\
		\leqslant & \left( C \sum_{T \in \mathcal{T}_h} (h_T^{-1} \| Q_0 \bm{\omega}-\bm{\omega} \|_T^2
		+ h_T \| \nabla(Q_0 \bm{\omega} -\bm{\omega})\|_T^2 )\right)^{\frac{1}{2}}\\
		&\left( C \sum_{T \in \mathcal{T}_h} (h_T^{-1} \| \nabla \bu - \mathbb{Q}_h(\nabla \bu) \|_T^2
		+h_T \| \nabla(\nabla \bu - \mathbb{Q}_h(\nabla \bu))\|_T^2 )\right)^{\frac{1}{2}}\\
		\leqslant & \left( \sum_{T \in \mathcal{T}_h} C h_T^3 \|\bm{\omega}\|_2^2 \right)^{\frac{1}{2}}
		\left( \sum_{T \in \mathcal{T}_h} C h_T^{2k-1} \| \bu \|_{k+1}^2 \right)^{\frac{1}{2}}\\
		\leqslant & Ch^{k+1} \|\bm{\omega}\|_2 (\|\bu_1\|_{k+1,\Omega_1}+\|\bu_2\|_{k+1,\Omega_2}).
	\end{align*}
	Next, we use same techniques and the fact that $\sum_{e \in \mathcal{E}_h^S} \langle \bm{\omega}-Q_b \bm{\omega},A (\nabla \bu )\cdot \bn-A \mathbb{Q}_h(\nabla \bu) \cdot \bn \rangle_e=0$  to obtain
	\begin{align*}
		&\Big|\sum_{T \in \mathcal{T}_h} \langle \bm{\omega}-Q_b \bm{\omega}, A\nabla \bu \cdot \bn -A \mathbb{Q}_h(\nabla \bu) \cdot \bn \rangle_{\partial T}\Big|\\
		=&\Big|\sum_{e \in \mathcal{E}_h^I} \langle \bm{\omega} -Q_b \bm{\omega}, A (\nabla \bu ) \cdot \bn -A \mathbb{Q}_h(\nabla \bu) \cdot \bn \rangle_e\Big|\\
		\leqslant & \sum_{e \in \mathcal{E}_h^I }\| \bm{\omega} -Q_b \bm{\omega}\|_e \| A \nabla \bu  -A \mathbb{Q}_h(\nabla \bu) \|_e\\
		\leqslant & C \left( \sum_{e \in \mathcal{E}_h^I } \| \bm{\omega} -Q_b \bm{\omega}\|_e^2 \right)^{\frac{1}{2}}
		\left( \sum_{e \in \mathcal{E}_h^I} \| \nabla \bu  -\mathbb{Q}_h(\nabla \bu) \|_e^2  \right)^{\frac{1}{2}}\\
		\leqslant &C \left( h^{k-\frac{1}{2}} \| \bu \|_{k+1} \right)\left( \sum_{e \in \mathcal{E}_h^I} \| \bm{\omega}-Q_b \bm{\omega} \|_e^2 \right)^{\frac{1}{2}} ,
	\end{align*}
	where
	\begin{align*}
		\| \bm{\omega} -Q_b \bm{\omega}\|_e^2 \leqslant \| \bm{\omega} - Q_0 \bm{\omega}\|_e^2 \leqslant \|\bm{\omega} - Q_0 \bm{\omega} \|_{\partial T}^2 \leqslant C h_T^3 \|\bm{\omega}\|_{2}^2.
	\end{align*}
	Using the above inequality, we have
	\begin{align*}
		\sum_{T \in \mathcal{T}_h} \langle \bm{\omega}-Q_b \bm{\omega}, A \nabla \bu \cdot \bn -A \mathbb{Q}_h(\nabla \bu) \cdot \bn \rangle_{\partial T}
		\leqslant Ch^{k+1}\|\bm{\omega}\|_2(\|\bu_1\|_{k+1,\Omega_1}+\|\bu_2\|_{k+1,\Omega_2}).
	\end{align*}
	Therefore,
	\begin{eqnarray}\label{L2error1}
		\begin{split}
			|\ell_1(\bu,Q_h \bm{\omega})|=&\Big|\sum_{T \in \mathcal{T}_h} \langle Q_0 \bm{\omega} -Q_b \bm{\omega}, A \nabla \bu \cdot \bn -A \mathbb{Q}_h(\nabla \bu) \cdot \bn \rangle_{\partial T}\Big|\\
			\leqslant&\Big|\sum_{T \in \mathcal{T}_h} \langle Q_0 \bm{\omega} -\bm{\omega}, A \nabla \bu \cdot \bn -A \mathbb{Q}_h(\nabla \bu) \cdot \bn \rangle_{\partial T}\Big|\\
			&+\Big|\sum_{T \in \mathcal{T}_h} \langle \bm{\omega} -Q_b \bm{\omega} , A \nabla \bu \cdot \bn -A \mathbb{Q}_h(\nabla \bu) \cdot \bn \rangle_{\partial T}\Big|\\
			\leqslant & Ch^{k+1} \|\bm{\omega}\|_2 (\|\bu_1\|_{k+1,\Omega_1}+\|\bu_2\|_{k+1,\Omega_2}).
		\end{split}
	\end{eqnarray}
	
	(2)For $\ell_2(p,Q_h \bm{\omega})$, we use the same method as the proof of $\ell_1(\bu,Q_h \bm{\omega})$ to get 
	\begin{align*}
		&\sum_{T \in \mathcal{T}_h} \langle Q_0 \bm{\omega} -\bm{\omega}, (p-\mathcal{Q}_h p)\bn \rangle_{\partial T}\\
		\leqslant & \left( \sum_{T \in \mathcal{T}_h} \| Q_0 \bm{\omega} -\bm{\omega} \|_{\partial T}^2 \right)^{\frac{1}{2}}
		\left( \sum_{T \in \mathcal{T}_h} \| p-\mathcal{Q}_h p \|_{\partial T}^2\right)^{\frac{1}{2}}\\
		\leqslant & \left( C \sum_{T \in \mathcal{T}_h} (h_T^{-1} \| Q_0 \bm{\omega} -\bm{\omega} \|_T^2
		+ h_T \| \nabla(Q_0 \bm{\omega} -\bm{\omega})\|_T^2 )\right)^{\frac{1}{2}}\\
		&\left( C \sum_{T \in \mathcal{T}_h} (h_T^{-1} \| p-\mathcal{Q}_h p \|_T^2
		+ h_T \| \nabla(p-\mathcal{Q}_h p)\|_T^2 ) \right)^{\frac{1}{2}}\\
		\leqslant &\left( \sum_{T \in \mathcal{T}_h} C h_T^3 \| \bm{\omega} \|_2^2 \right)^{\frac{1}{2}} \left( \sum_{T \in \mathcal{T}_h} Ch^{2k-1}
		\| p\|_k^2  \right)^{\frac{1}{2}}\\
		\leqslant & C h^{k+1} \| \bm{\omega} \|_2 \| p \|_k.
	\end{align*}
	Similarly, we have
	\begin{align*}
		&\sum_{T \in \mathcal{T}_h} \langle \bm{\omega} - Q_b\bm{\omega}, (p-\mathcal{Q}_h p)\bn \rangle_{\partial T}
		\leqslant C h^{k+1} \| \bm{\omega}\|_2  (\|p_1\|_{k,\Omega_1}+\|p_2\|_{k,\Omega_2}).
	\end{align*}
	Therefore, for $\ell_2(p,Q_h \bw)$, we derive the following estimate
	\begin{eqnarray}\label{L2error2}
		|\ell_2(p,Q_h \bm{\omega})| \leqslant C h^{k+1} \| \bm{\omega} \|_2 \| p \|_k.
	\end{eqnarray}
	
	(3)For $s(Q_h \bu, Q_h \bm{\omega})$, we get
	\begin{eqnarray}\label{L2error3}
		\begin{split}
			&|s(Q_h \bu, Q_h \bm{\omega})|\\
			\leqslant& \left| \sum_{T \in \mathcal{T}_h}A h_T^{-1} \langle Q_b(Q_0 \bu)-Q_b \bu, Q_b(Q_0 \bm{\omega})-Q_b \bm{\omega} \rangle_{\partial T \setminus (\partial T \cap \Gamma)}\right|\\
			&+\left|\sum_{T \in \mathcal{T}_h}A h_T^{-1}  \langle Q_0 \bu-Q_b \bu, Q_0 \bm{\omega}-Q_b \bm{\omega} \rangle_{\partial T \cap \Gamma}\right|\\
			\leqslant &C  \left(\sum_{T \in \mathcal{T}_h} h_T^{-1} \|Q_0 \bu -\bu \|_{\partial T \setminus (\partial T \cap \Gamma)}^2 \right)^{\frac{1}{2}} \left(\sum_{T \in \mathcal{T}_h} h_T^{-1}\|Q_0 \bm{\omega} -\bm{\omega} \|_{\partial T \setminus (\partial T \cap \Gamma)}^2\right)^{\frac{1}{2}}\\
			&+ \left( \sum_{T \in \mathcal{T}_h} h_T^{-1}
			\|Q_0 \bu -\bu \|_{\partial T \cap \Gamma}^2+\|Q_b \bu -\bu \|_{\partial T \cap \Gamma}^2 \right)^{\frac{1}{2}}\\
			&\left( \sum_{T \in \mathcal{T}_h} h_T^{-1} \|Q_0 \bm{\omega}-\bm{\omega} \|_{\partial T \cap \Gamma}^2+\|Q_b \bm{\omega} -\bm{\omega} \|_{\partial T \cap \Gamma}^2\right)^{\frac{1}{2}}\\
			\leqslant & Ch^{k+1}(\|\bu_1\|_{k+1,\Omega_1}+\|\bu_2\|_{k+1,\Omega_2}) \|\bm{\omega}\|_2.
		\end{split}
	\end{eqnarray}
	
	(4)For $\ell_3(\bu,Q_h \bm{\omega})$, using the fact that $\sum_{i=1}^2 \sum_{e \in \Gamma} \langle Q_b \bu_i -\bu_i,Q_b(\nabla \bm{\omega}_i\cdot \bn_i) \rangle_e=0$, the Cauchy-Schwarz inequality, the trace inequality and the inverse inequality, we obtain
	\begin{align*}
		&|\ell_3(\bu,Q_h \bm{\omega})|\\
		=&\Big|\sum_{i=1}^2\sum_{e \in \Gamma} \langle Q_b \bu_i-\bu_i,A_i \nabla_w(Q_h \bm{\omega}_i)\cdot \bn \rangle_e\Big|\\
		\leqslant & C \sum_{i=1}^2 \sum_{e \in \Gamma} \|Q_b \bu_i-\bu_i \|_e \|\nabla_w(Q_h \bm{\omega}_i) \cdot \bn_i-\mathbb{Q}_h(\nabla \bm{\omega}_i) \cdot \bn_i \|_e\\
		&+C \sum_{i=1}^2 \sum_{e \in \Gamma} \|Q_b \bu_i-\bu_i \|_e \|\mathbb{Q}_h(\nabla \bm{\omega}_i)\cdot \bn_i-\nabla \bm{\omega}_i\cdot \bn_i \|_e\\
		&+C \sum_{i=1}^2 \sum_{e \in \Gamma} \|Q_b \bu_i-\bu_i \|_e \|Q_b(\nabla \bm{\omega}_i \cdot \bn_i)-(\nabla \bm{\omega}_i\cdot \bn_i) \|_e\\
		\leqslant & \left(\sum_{i=1}^2 \sum_{e \in \Gamma} \|Q_b \bu_i - \bu_i \|_e^2 \right)^{\frac{1}{2}} \left(\sum_{i=1}^2 \sum_{T \in \mathcal{T}_h^i} h_T^{-1}\|\nabla_w(Q_h \bm{\omega}_i)-\mathbb{Q}_h(\nabla \bm{\omega}_i) \|_T^2 \right)^{\frac{1}{2}} \\
		&+\left(\sum_{i=1}^2 \sum_{e \in \Gamma} \|Q_b \bu_i - \bu_i \|_e^2 \right)^{\frac{1}{2}} \left(\sum_{i=1}^2 \sum_{e \in \Gamma} \|\mathbb{Q}_h(\nabla \bm{\omega}_i)-\nabla \bm{\omega}_i \|_e^2 \right)^{\frac{1}{2}} \\
		&+\left(\sum_{i=1}^2 \sum_{e \in \Gamma} \|Q_b \bu_i - \bu_i \|_e^2 \right)^{\frac{1}{2}} \left(\sum_{i=1}^2 \sum_{e \in \Gamma} \|Q_b(\nabla \bm{\omega}_i \cdot \bn_i)-(\nabla \bm{\omega}_i\cdot \bn_i) \|_e^2 \right)^{\frac{1}{2}},
	\end{align*}
	where by the trace inequality and the estimate (\ref{projectorestimate1}), we have
	\begin{align*}
		&\sum_{i=1}^2 \sum_{e \in \Gamma} \|Q_b \bu_i - \bu_i \|_e^2 \\
		\leqslant &\sum_{i=1}^2 \sum_{e \in \Gamma} \|Q_0 \bu_i - \bu_i \|_e^2 \\
		\leqslant & \sum_{i=1}^2 \sum_{T \in \mathcal{T}_h^I} C \left(h_T^{-1} \|Q_0 \bu_i - \bu_i  \|_T^2 +h_T \|\nabla(Q_0 \bu_i - \bu_i) \|_T^2 \right) \\
		\leqslant & Ch^{2k+1} (\|\bu_1\|^2_{k+1,\Omega_1}+\|\bu_2\|^2_{k+1,\Omega_2}).
	\end{align*}
	Similarly, we get the following estimates:
	\begin{eqnarray*}
		\sum_{i=1}^2 \sum_{e \in \Gamma} \| \mathbb{Q}_h(\nabla \bm{\omega}_i)-\nabla \bm{\omega}_i\|_e^2 \leqslant Ch\|\bm{\omega}\|^2_2,\\
		\sum_{i=1}^2 \sum_{e \in \Gamma} \|Q_b(\nabla \bm{\omega}_i \cdot \bn)-(\nabla \bm{\omega}_i\cdot \bn) \|_e^2 \leqslant Ch\|\bm{\omega}\|^2_2.
	\end{eqnarray*}
	Next according to Eq.(\ref{weak gradient exchange 2}), we take $\tau =\nabla_w (Q_h \bm{\omega})-\mathbb{Q}_h(\nabla \bm{\omega})$ to lead to
	\begin{eqnarray*}
		\begin{split}
			&\|\nabla_w (Q_h \bm{\omega})-\mathbb{Q}_h(\nabla \bm{\omega})\|^2_T\\
			\leqslant& \|Q_b \bm{\omega}_i -\bm{\omega}_i\|_{\partial T}	\|\nabla_w (Q_h \bm{\omega})-\mathbb{Q}_h(\nabla \bm{\omega})\|^2_{\partial T} \\
			\leqslant& Ch^{-1} \|Q_b \bm{\omega}_i -\bm{\omega}_i\|_{\partial T} \|\nabla_w (Q_h \bm{\omega})-\mathbb{Q}_h(\nabla \bm{\omega})\|^2_T,
		\end{split}
	\end{eqnarray*}
	therefore we have
	\begin{eqnarray}
		\|\nabla_w (Q_h \bm{\omega})-\mathbb{Q}_h(\nabla \bm{\omega})\|_T \leqslant Ch^{-1} \|Q_b \bm{\omega}_i -\bm{\omega}_i\|_{\partial T} \leqslant Ch^{\frac{1}{2}}\|\bm{\omega} \|_2.
	\end{eqnarray}
	Combining the above four estimates, we get
	\begin{eqnarray}\label{L2error4}
		|\ell_3(\bu,Q_h \bm{\omega})| \leqslant Ch^{k+1} \|\bm{\omega}\|_2 (\|\bu_1\|_{k+1,\Omega_1}+\|\bu_2\|_{k+1,\Omega_2}).
	\end{eqnarray}
	
	(5)For $\ell_4(\bu,\mathcal{Q}_h \theta)$, by the fact that $\sum_{i=1}^2 \sum_{e \in \Gamma} \langle Q_b \bu_i -\bu_i, \mathcal{Q}_h (\theta \bn) \rangle_e =0$, we have
	\begin{eqnarray}\label{L2error5}
		\begin{split}
			|\ell_4(\bu,\mathcal{Q}_h \theta)| =&\Big|\sum_{i=1}^2 \sum_{e \in \Gamma} \langle Q_b \bu_i -\bu_i ,(\mathcal{Q}_h \theta)\bn \rangle_e\Big|\\
			\leqslant & \Big|\sum_{i=1}^2 \sum_{e \in \Gamma} \langle Q_b \bu_i -\bu_i, (\mathcal{Q}_h \theta)\bn -\theta \bn \rangle_e\Big|\\
			&+\Big|\sum_{i=1}^2 \sum_{e \in \Gamma} \langle Q_b \bu_i -\bu_i, \mathcal{Q}_h (\theta\bn) -\theta \bn \rangle_e\Big|\\
			\leqslant & Ch^{k+1} (\|\bu_1\|_{k+1,\Omega_1}+\|\bu_2\|_{k+1,\Omega_2})\|\theta\|_1.
		\end{split}
	\end{eqnarray}
	Combining the five estimates (\ref{L2error1})-(\ref{L2error5}), we obtain 
	\begin{align*}
		\|\be_0\|^2
		\leqslant& C h^{k+1}(\| \bu_1 \|_{k+1,\Omega_1}+\|\bu_2\|_{k+1,\Omega_2}+\|p_1\|_{k,\Omega_1}+\|p_2\|_{k,\Omega_2}) (\| \bw \|_2+\|\theta\|_1)\\
		&+Ch\3bar \be_h \3bar \|\be_0\|\\
		\leqslant &C h^{k+1}(\|\bu_1\|_{k+1,\Omega_1}+\|\bu_2\|_{k+1,\Omega_2}+\|p_1\|_{k,\Omega_1}+\|p_2\|_{k,\Omega_2})\| \be_0\|\\
		&+Ch\3bar \be_h \3bar \|\be_0\|.
	\end{align*}
	Finally, it follows from Theorem \ref{H1ERROR} that
	\begin{align*}
		\| \be_0 \|
		\leqslant & C h^{k+1}(\|\bu_1\|_{k+1,\Omega_1}+\|\bu_2\|_{k+1,\Omega_2}+\|p_1\|_{k,\Omega_1}+\|p_2\|_{k,\Omega_2})
		+Ch\3bar \be_h \3bar \\
		\leqslant & C h^{k+1}(\|\bu_1\|_{k+1,\Omega_1}+\|\bu_2\|_{k+1,\Omega_2}+\|p_1\|_{k,\Omega_1}+\|p_2\|_{k,\Omega_2}).
	\end{align*}
	The proof of theorem is complete.
\end{proof}
\section{Numerical Results} In this section, we give some numerical examples to validate the efficiency of the proposed WG method. We solve the interface problems in the domain $\Omega = [-1, 1] \times [-1, 1] $ with different interfaces.

\begin{example}\label{example3}
	In this example, we solve the interface problems with discontinuous velocity function and pressure function. The viscosity coefficient $A$ is continuous in $\Omega$. And the interface is described as
	\begin{align*}
		x^2+y^2=\frac{1}{4}.
	\end{align*}
	The exact solutions are
	\begin{eqnarray*}
		\bu_1&=&\left(\begin{array}{ccc}
			2 \sin y \cos y \cos x \\
			(\sin^2 y-2)\sin x
		\end{array}\right),\\
		\bu_2&=&\left(\begin{array}{ccc}
			- \cos(\pi x) \sin(\pi y)\\
			\sin(\pi x) \cos(\pi y)  
		\end{array}\right).\\
		p&=&\left \{\begin{array}{rcl}
			1 & \mbox{in} & \Omega_1 \\
			\frac{\pi}{16-\pi} & \mbox{in} & \Omega_2
		\end{array}\right.,\quad
		A=\left \{\begin{array}{rcl}
			1 & \mbox{in} & \Omega_1 \\
			1 & \mbox{in} & \Omega_2
		\end{array}\right..
	\end{eqnarray*}
\end{example}

\begin{figure}[htbp]
	\centering
	\begin{minipage}[t]{1\textwidth}
		\centering
		\includegraphics[width=6cm]{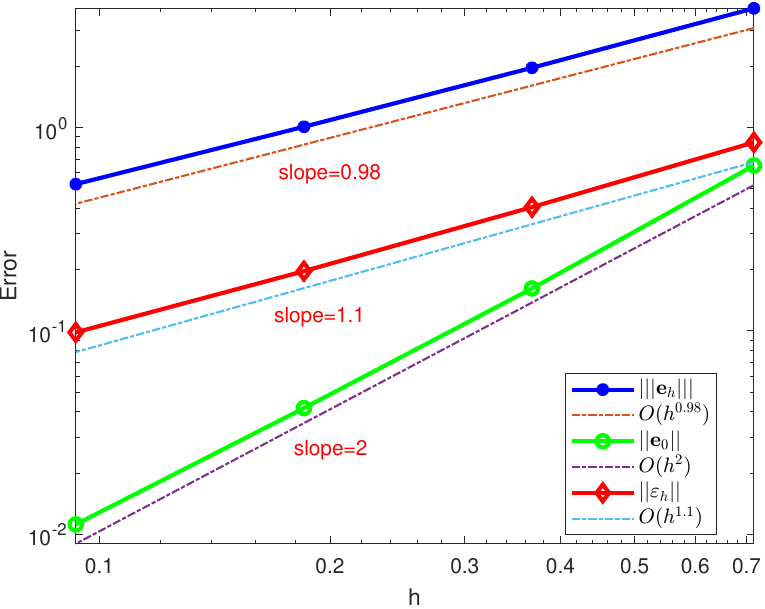}\hspace{0.5cm}
		\includegraphics[width=6cm]{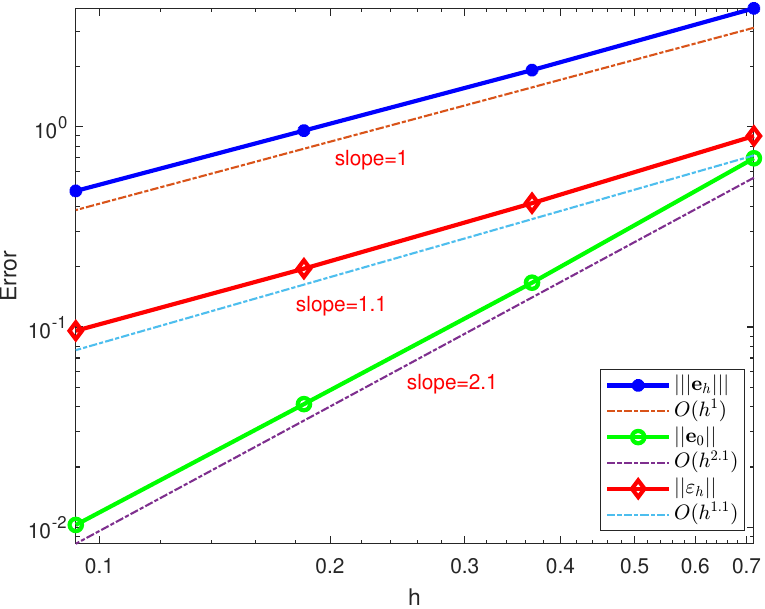}
		\caption{\small{ The numerical results for Example \ref{example3} on the curved triangular meshes (left) and straight triangular meshes (right) with $k=1$.}}\label{exam1-tri-k1}
	\end{minipage}
\end{figure}
\begin{figure}[htbp]
	\centering
	\begin{minipage}[t]{1\textwidth}
		\centering
		\includegraphics[width=6cm]{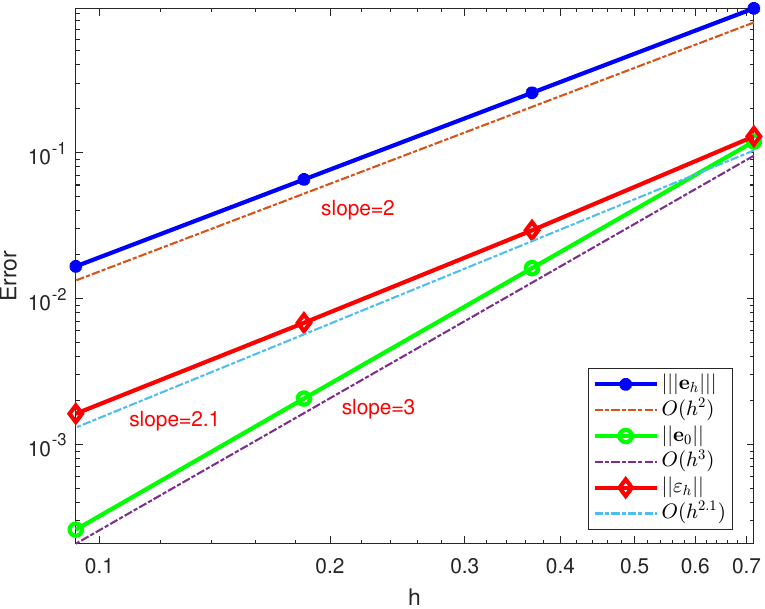}\hspace{0.5cm}
		\includegraphics[width=6cm]{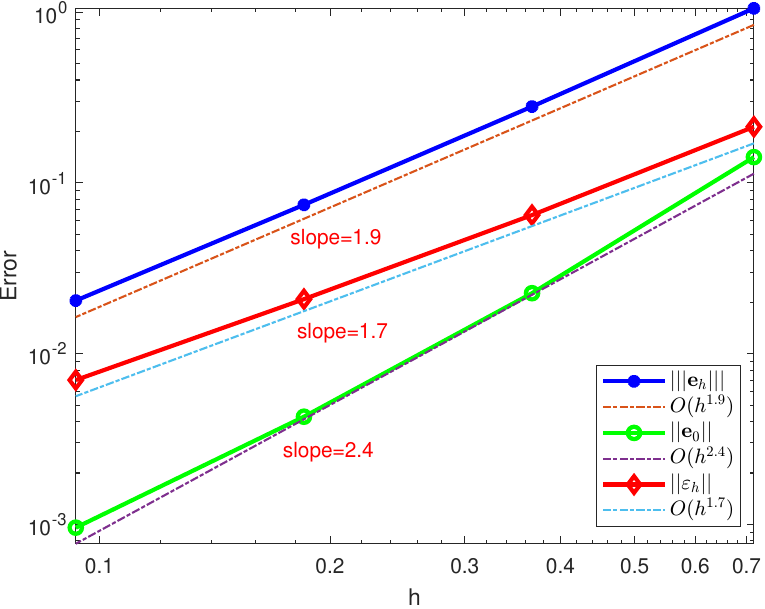}
		\caption{\small{ The numerical results for Example \ref{example3} on the curved triangular meshes (left) and straight triangular meshes (right) with $k=2$.}}\label{exam1-tri-k2}
	\end{minipage}
\end{figure}

\begin{figure}[htbp]
	\centering
	\begin{minipage}[t]{1\textwidth}
		\centering
		\includegraphics[width=6cm]{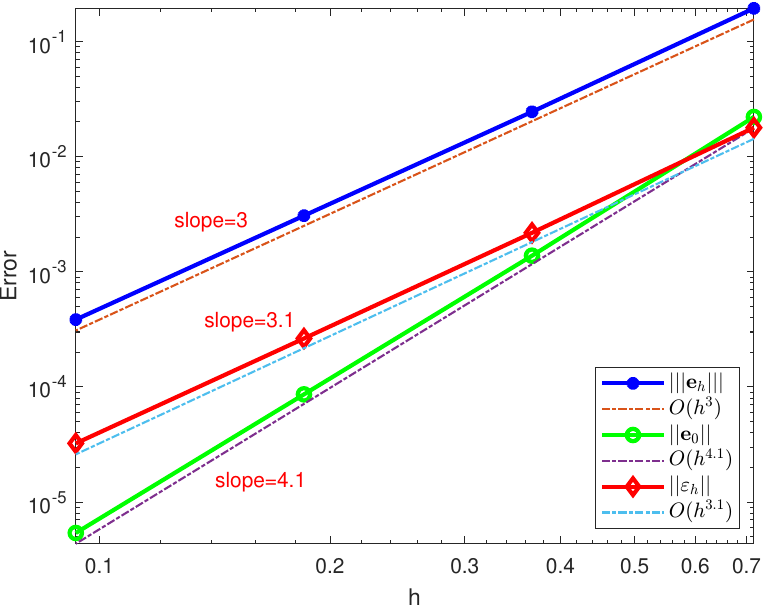}\hspace{0.5cm}
		\includegraphics[width=6cm]{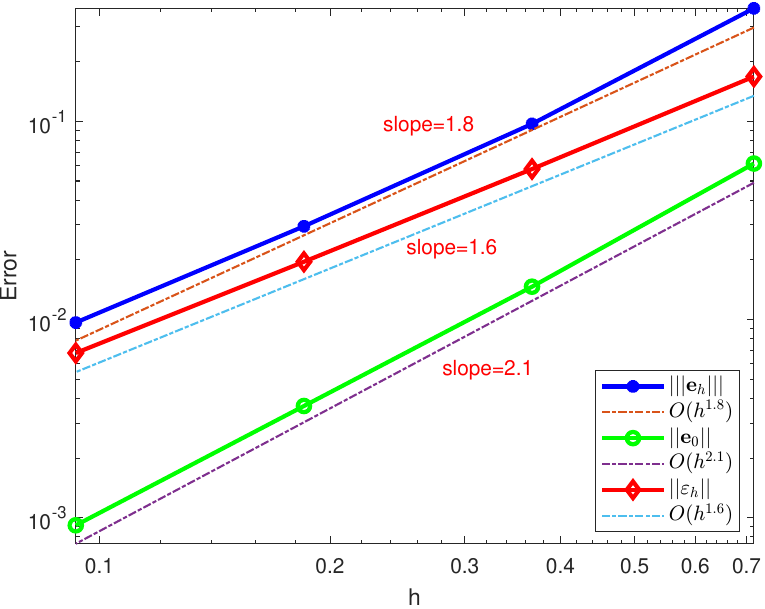}
		\caption{\small{ The numerical results for Example \ref{example3} on the curved triangular meshes (left) and straight triangular meshes (right) with $k=3$.}}\label{exam1-tri-k3}
	\end{minipage}
\end{figure}

\begin{figure}[t!]\label{test problem 1}
	\centering
	\begin{minipage}[t]{0.33\linewidth}
		\centering
		\includegraphics[width=1.1\linewidth]{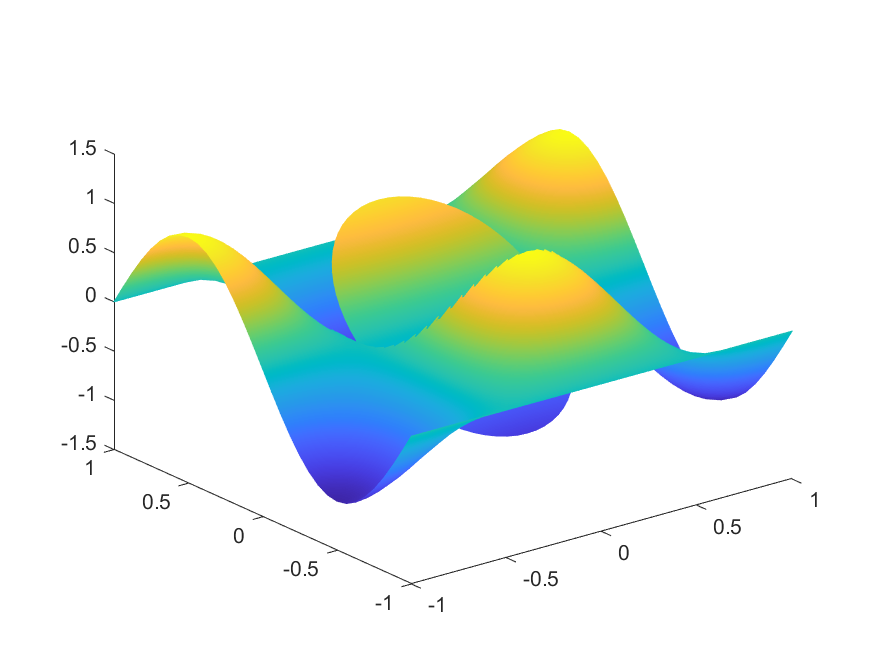}
	\end{minipage}%
	\begin{minipage}[t]{0.33\linewidth}
		\centering
		\includegraphics[width=1.1\linewidth]{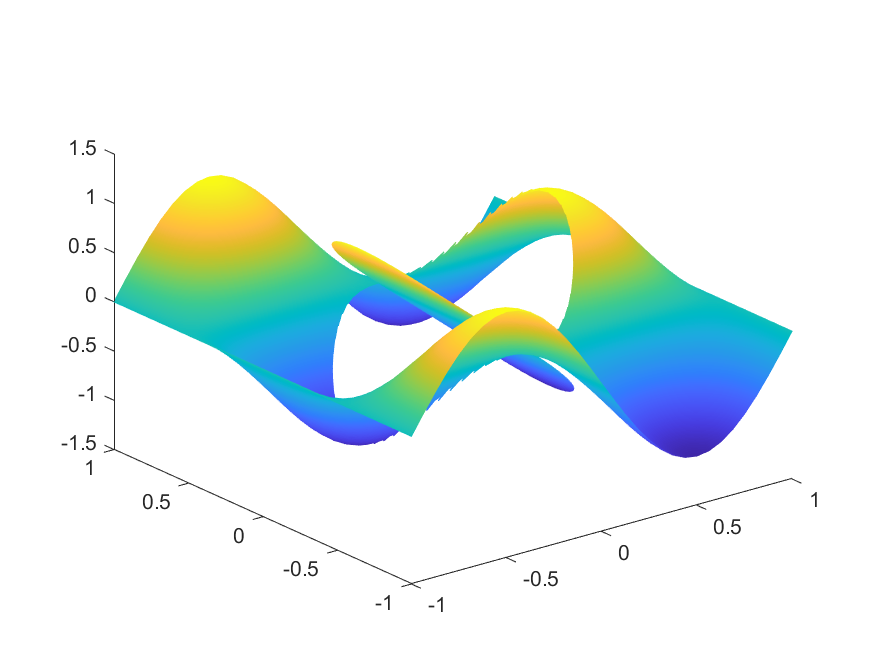}
	\end{minipage}
	\begin{minipage}[t]{0.33\linewidth}
		\centering \includegraphics[width=1.1\linewidth]{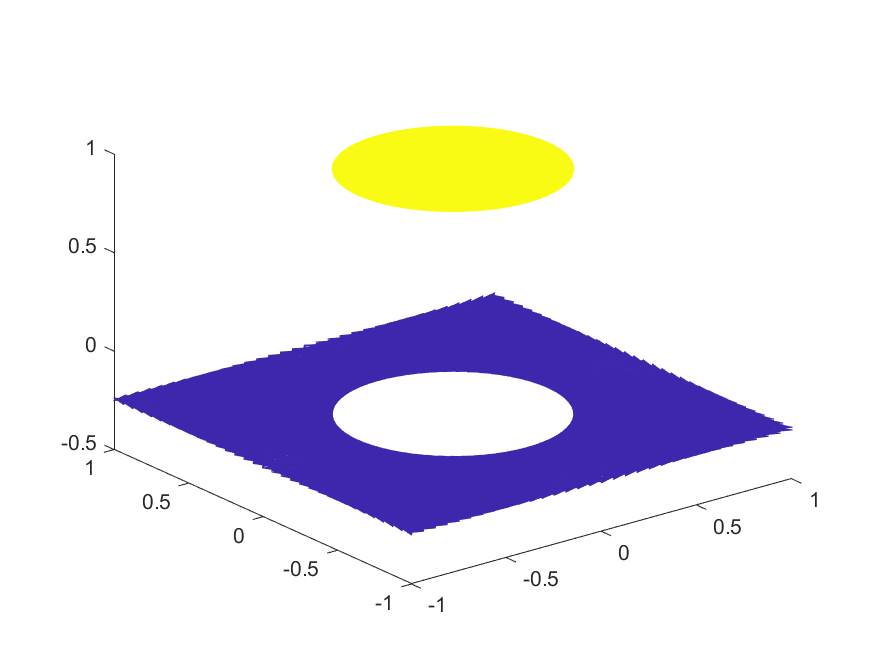}
	\end{minipage}
	\caption{The numerical solutions by $P_2$ WG element on curved triangular meshes level 3 in Example \ref{example3}. Left: The first component of $\bu_h$. Middle: The second component of $\bu_h$. Right: Pressure function $p_h$.}
	\label{figure7.1}
\end{figure} 


In Figures \ref{exam1-tri-k1} - \ref{exam1-tri-k3}, we compare the numerical results on the straight triangular meshes and curved triangular meshes. On the straight triangular meshes, we use the straight segments to replace the curved interface. As we can see, the optimal order convergence is obtained by the $P_1$ WG element in two cases. When using $P_2$ and $P_3$ WG elements to solve the problems, the orders of convergence are less than the optimal orders on the straight triangular meshes. However, all numerical solutions converge at the optimal rates on the curved triangular meshes. This comparison shows the advantages of our proposed WG scheme. The numerical solutions on the curved triangular meshes are plotted in Figure \ref{figure7.1}.

\begin{example}\label{example2}
	In the example, the interface between two subdomains is described as:
	\begin{align*}
		r=\frac{1}{2}+\frac{sin(2 \theta)}{4}.
	\end{align*}
	The exact solutions are
	\begin{eqnarray*}
	\bu_1&=&\left(\begin{array}{ccc}
		2 \sin y \cos y \cos x \\
		(\sin^2 y-2)\sin x
	\end{array}\right),\\
	\bu_2&=&\left(\begin{array}{ccc}
		- \cos(\pi x) \sin(\pi y)\\
		\sin(\pi x) \cos(\pi y)  
	\end{array}\right).\\
	p&=&\left \{\begin{array}{rcl}
		\cos(\pi x)\cos(\pi y) & \mbox{in} & \Omega_1 \\
		\cos(\pi x)\cos(\pi y)& \mbox{in} & \Omega_2
	\end{array}\right.,\quad
	A=\left \{\begin{array}{rcl}
		1 & \mbox{in} & \Omega_1 \\
		10& \mbox{in} & \Omega_2
	\end{array}\right..
\end{eqnarray*}
\end{example}

\begin{figure}[htbp]
	\centering
	\begin{minipage}[t]{1\textwidth}
		\centering
		\includegraphics[width=4cm]{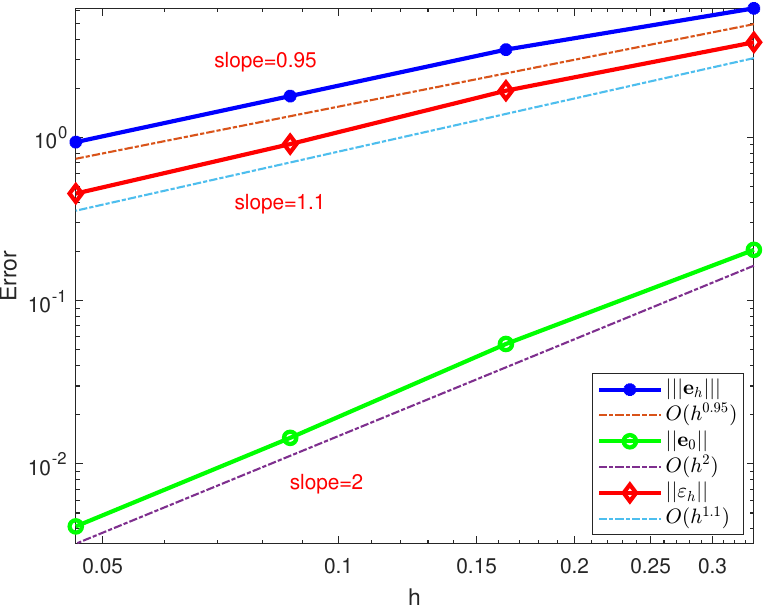}\hspace{0.5cm}
		\includegraphics[width=4cm]{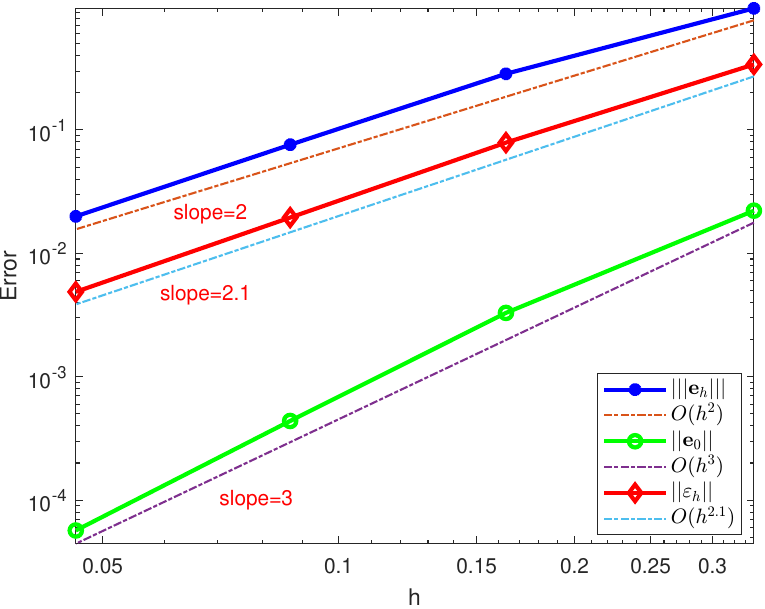}\hspace{0.5cm}
		\includegraphics[width=4cm]{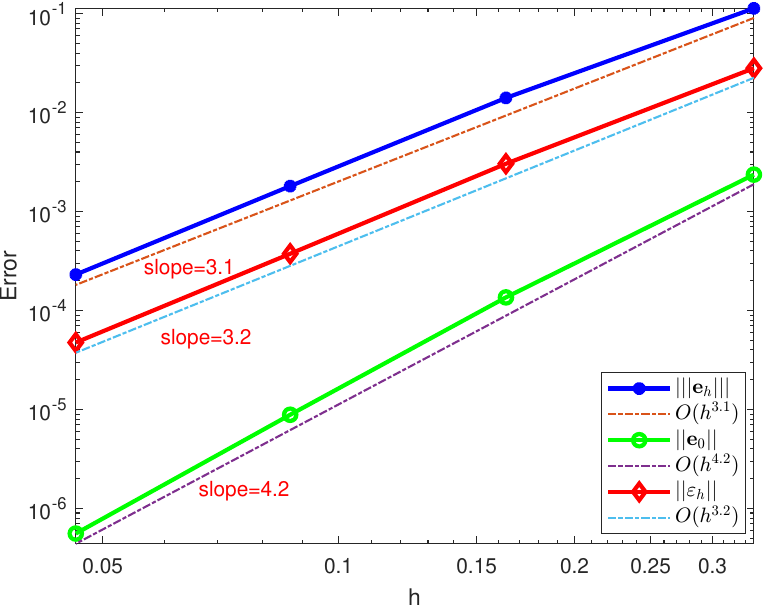}
		\caption{\small{ The numerical results for Example \ref{example2} on curved triangular meshes with $k=1$ (left), $2$ (middle), $3$ (right).}}\label{exam2-tri}
	\end{minipage}
\end{figure}

\begin{figure}[htbp]
	\centering
	\begin{minipage}[t]{1\textwidth}
		\centering
		\includegraphics[width=4cm]{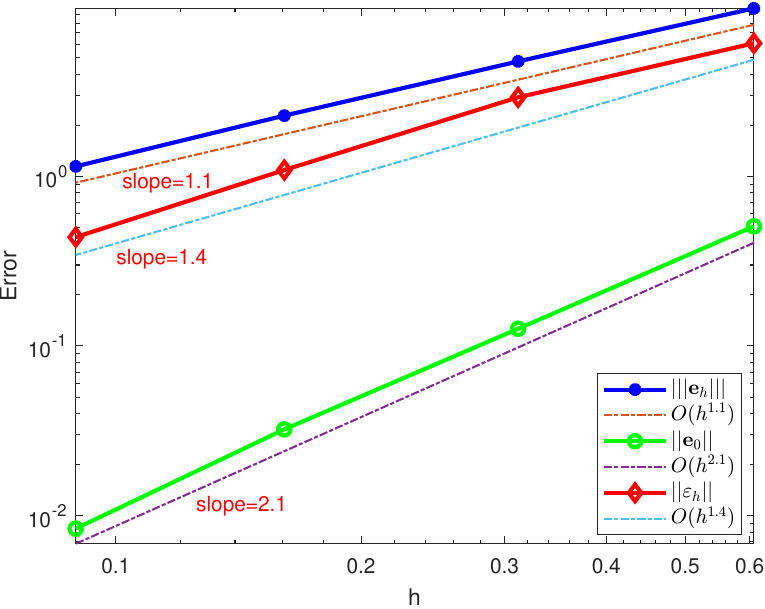}\hspace{0.5cm}
		\includegraphics[width=4cm]{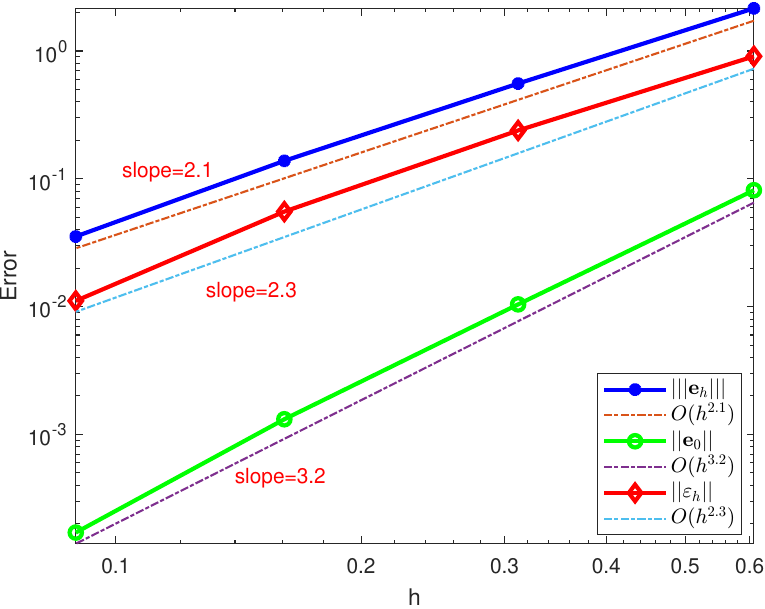}\hspace{0.5cm}
		\includegraphics[width=4cm]{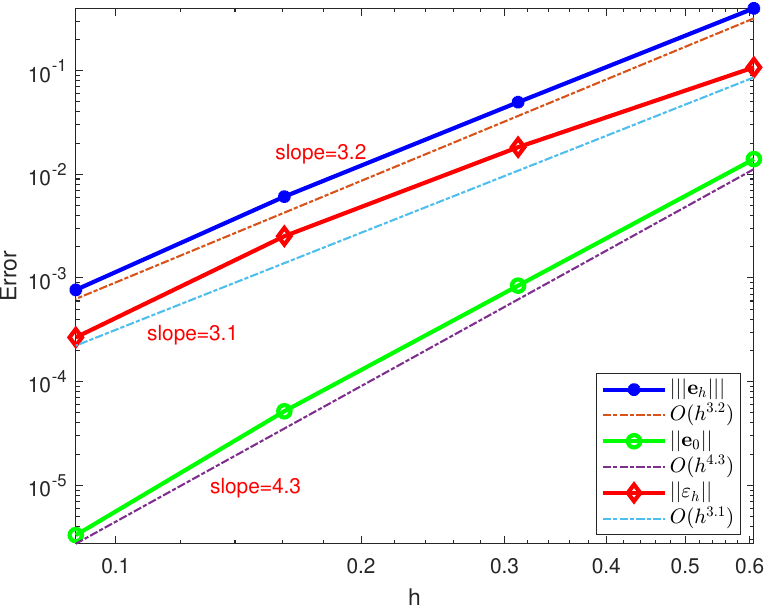}
		\caption{\small{ The numerical results for Example \ref{example2} on curved quadrilateral meshes with $k=1$ (left), $2$ (middle), $3$ (right).}}\label{exam2-rect}
	\end{minipage}
\end{figure}

\begin{figure*}[t!]
	\centering
	\begin{minipage}[t]{0.33\linewidth}
		\centering
		\includegraphics[width=1.1\linewidth]{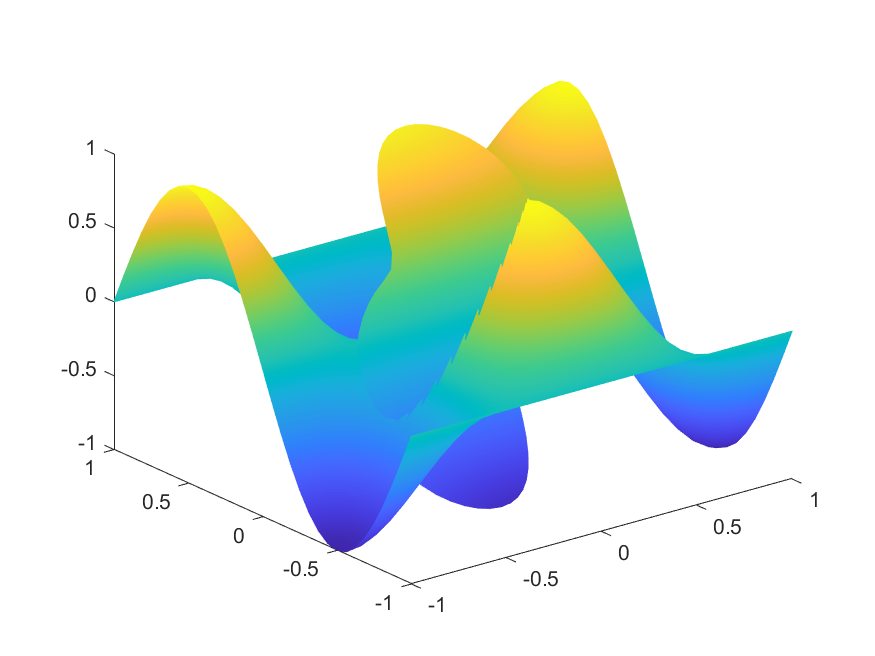}
	\end{minipage}%
	\begin{minipage}[t]{0.33\linewidth}
		\centering
		\includegraphics[width=1.1\linewidth]{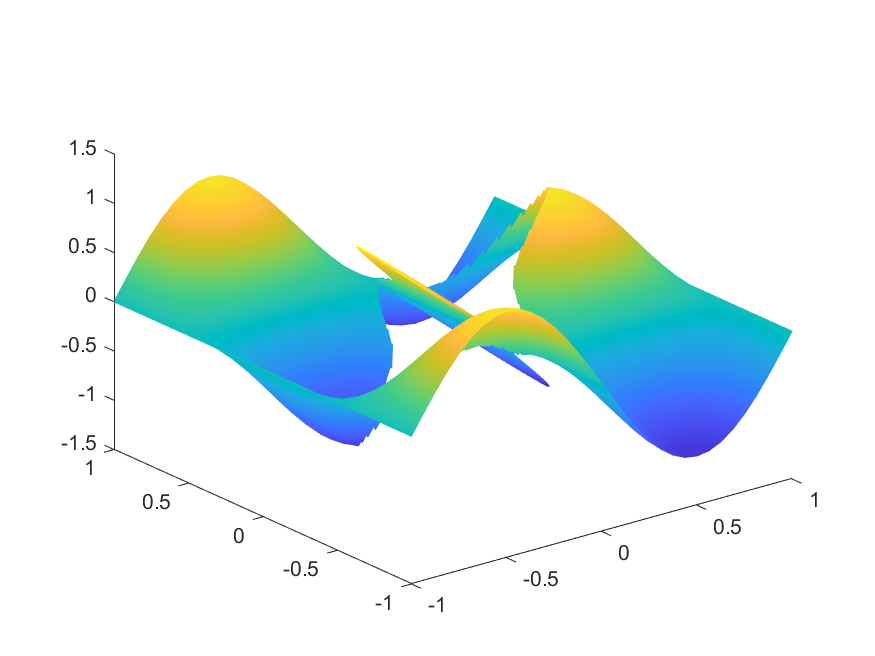}
	\end{minipage}
	\begin{minipage}[t]{0.33\linewidth}
		\centering
		\includegraphics[width=1.1\linewidth]{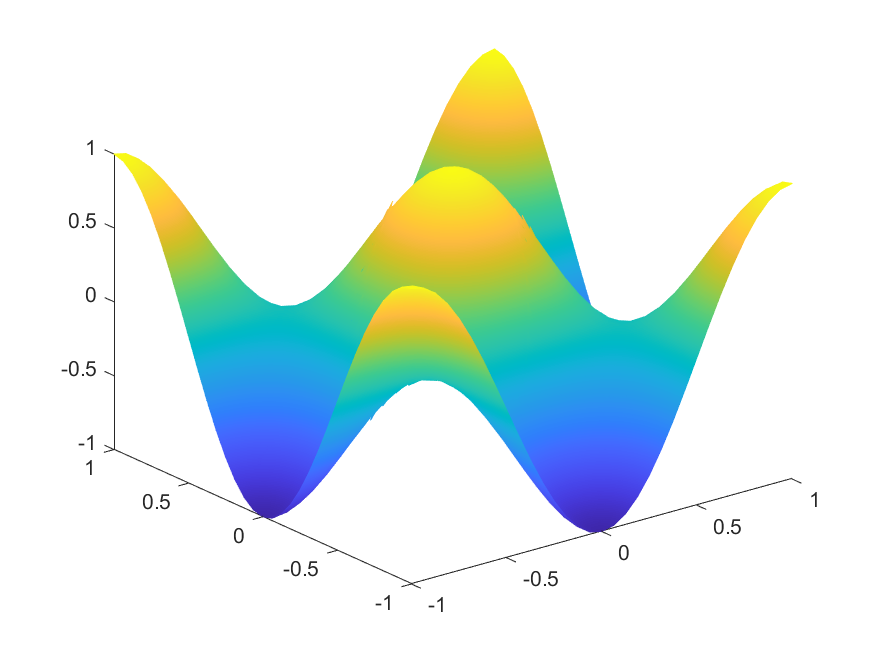}
	\end{minipage}
	\caption{The numerical solutions by $P_2$ WG element on triangular meshes level 3 in Example \ref{example2}. Left: The first component of $\bu_h$. Middle: The second component of $\bu_h$. Right: Pressure function $p_h$}
	\label{figure7.3}
\end{figure*}

In this example, we consider the interface problems with discontinuous velocity function $\bu$ and the viscosity coefficient $A$. The pressure function $p$ is continuous in the domain $\Omega$. The velocity function $\bu_h$ and pressure function $p_h$ are plotted in Figure \ref{figure7.3}, respectively. The numerical results are plotted in Figures \ref{exam2-tri} - \ref{exam2-rect}, by $P_1$ to $P_3$ WG elements on curved triangular meshes and curved quadrilateral meshes. The $P_k$ WG elements show the convergence orders $O(h^k)$ and $O(h^{k+1})$ for velocity functions in the energy norm and $L^2$ norm, respectively. For the pressure function, the $P_k$ WG elements achieve the convergence orders $O(h^k)$ in the $L^2$ norm. The orders of convergence are optimal in every case.


\section{Conclusion}In this paper, we use the weak Galerkin finite element method to deal with Stokes interface problems with curved interface. We present a weak Galerkin finite element numerical scheme with two values at the interface. Based on the WG scheme, we prove that numerical solutions converge to the exact solutions at the optimal rates. Additionally, the numerical results from our examples show the optimal convergence orders are obtained in both the energy norm and the $L^2$ norm on the triangular meshes and quadrilateral meshes. These results align with the theoretical analysis.

\section*{Declaration of competing interest}
The authors declare that they have no known competing financial interests or personal relationships that could have appeared to
influence the work reported in this paper.

\section*{Data availability}
The data that support the findings of this study are available from the corresponding author, upon reasonable request.

\section*{Acknowledgements}
This research was supported by the National Natural Science Foundation of China (grant No. 11901015, 12271208, 12001232, 12201246, 22341302), the National Key Research and Development Program of China (grant No. 2020YFA0713602, 2023YFA1008803), and the Key Laboratory of Symbolic Computation and Knowledge Engineering of Ministry of Education of China housed at Jilin University.


\newpage

\end{document}